\newtheorem{theo}{Theorem}[section]
\newtheorem{thm}{Theorem}[section]
\newtheorem{prop}[theo]{Proposition}
\newtheorem{lemma}[theo]{Lemma}
\newtheorem{problem}[theo]{Problem}
\newtheorem{conj}[theo]{Conjecture}
\theoremstyle{remark}
\newtheorem{rmk}[theo]{Remark}
\newcommand{\BA}{{\mathbb{A}}}
\newcommand{\BC}{{\mathbb{C}}}
\newcommand{\BF}{{\mathbb{F}}}
\newcommand{\BG}{{\mathbb{G}}}
\newcommand{\BI}{{\mathbb{I}}}
\newcommand{\BQ}{{\mathbb{Q}}}
\newcommand{\BR}{{\mathbb{R}}}
\newcommand{\BZ}{{\mathbb{Z}}}
\newcommand{\CA}{{\mathcal A}}
\newcommand{\CB}{{\mathcal B}}
\newcommand{\CD}{{\mathcal D}}
\newcommand{\CE}{{\mathcal E}}
\newcommand{\CF}{{\mathcal F}}
\newcommand{\CI}{{\mathcal I}}
\newcommand{\CJ}{{\mathcal J}}
\newcommand{\CK}{{\mathcal K}}
\newcommand{\CL}{{\mathcal L}}
\newcommand{\CO}{{\mathcal O}}
\newcommand{\CQ}{{\mathcal Q}}
\newcommand{\CS}{{\mathcal S}}
\newcommand{\CZ}{{\mathcal Z}}
\newcommand{\Fq}{{\mathfrak{q}}}
\newcommand{\Ft}{{\mathfrak{t}}}
\newcommand{\ch}{\mathsf{ch}}
\newcommand{\id}{\mathrm{id}}
\newcommand{\pt}{{\mathsf{p}}}
\newcommand{\td}{{\mathrm{td}}}
\newcommand{\blangle}{\big\langle}
\newcommand{\brangle}{\big\rangle}
\newcommand{\Mbar}{{\overline M}}
\newcommand\ev{\operatorname{ev}}
\newcommand{\Pic}{\mathop{\rm Pic}\nolimits}
\newcommand{\DT}{\mathsf{DT}}
\newcommand{\GW}{\mathsf{GW}}
\DeclareFontFamily{OT1}{rsfs}{}
\DeclareFontShape{OT1}{rsfs}{n}{it}{<-> rsfs10}{}
\DeclareMathAlphabet{\curly}{OT1}{rsfs}{n}{it}
\renewcommand\hom{\curly H\!om}
\newcommand\Ext{\operatorname{Ext}}
\newcommand\Hom{\operatorname{Hom}}
\newcommand{\p}{\mathbb{P}}
\newcommand\Spec{\operatorname{Spec}}
\newcommand\Quot{\operatorname{Quot}}
\newcommand\Hilb{\operatorname{Hilb}}
\newcommand\Coh{\operatorname{Coh}}
\newcommand{\vd}{\mathsf{vd}}
\newcommand{\vir}{\mathsf{vir}}
\newcommand{\std}{\mathsf{std}}
\newcommand{\E}{E}
\newcommand{\HH}{\mathsf{H}}
\newcommand{\K}{\mathsf{K3}}
\newcommand{\rk}{\operatorname{\mathsf{rk}}}
\newcommand{\Obs}{\mathrm{Obs}}
\newcommand\Cone{\operatorname{Cone}}
\renewcommand\div{\operatorname{div}}
\title{Multiple cover formulas for K3 geometries, wall-crossing, and Quot schemes}
\author{Georg Oberdieck}
\date{}
\begin{document}
	\maketitle
	\setcounter{section}{0}
	
	%\vspace{-20pt}
	\begin{abstract}
		Let $S$ be a K3 surface. We study the reduced Donaldson-Thomas theory of the cap $(S \times \p^1) / S_{\infty}$ by a second cosection argument. We obtain four main results: \\
		(i) A multiple cover formula for the rank 1 Donaldson-Thomas theory of $S \times E$, leading to a complete solution of this theory. 
		(ii) Evaluation of the wall-crossing term in Nesterov's quasimap wall-crossing between the punctual Hilbert schemes and Donaldson-Thomas theory of $S \times \text{Curve}$.
		(iii) A multiple cover formula for the genus $0$ Gromov-Witten theory of punctual Hilbert schemes.
		(iv) Explicit evaluations of virtual Euler numbers of Quot schemes of stable sheaves on K3 surfaces.
	\end{abstract}
	%\vspace{-10pt}
	%\setcounter{tocdepth}{2}
	%\tableofcontents

	\section{Introduction}
	\subsection{Overview}
	Let $S$ be a K3 surface.
	In this paper we consider three different types of counting theories:
	\begin{itemize}[itemsep=0pt]
		\item Gromov-Witten theory of moduli spaces of stable sheaves on $S$,
		\item Donaldson-Thomas theory of $S \times E$, where
		$E$ is an elliptic curve,
		\item Virtual Euler characteristics of Quot schemes of stable sheaves.
	\end{itemize}
	As usual for K3 geometries, in all three theories the standard virtual fundamental classes of the moduli spaces vanish. Instead the counting theories are defined by a reduced virtual class.
	This deviation from the standard theory
	leads to surprising additional structure of the invariants.
	Two of them are taken up in this work.
	First, one expects a multiple cover formula that expresses counts in imprimitive (curve) classes in terms of those for primitive classes \cite{KKV, KMPS, PT, K3xE, ObMC}.
	In this paper we prove such multiple cover formulas for the genus $0$ Gromov-Witten theory of the punctual Hilbert schemes and the rank $1$ Donaldson-Thomas theory of $S \times \E$. In particular, together with the results of \cite{HAE} the latter determines all rank $1$ reduced Donaldson-Thomas invariants of the (non-strict) Calabi-Yau threefold $S \times \E$.
	The second structural result concerns wall-crossing formulas, where due to $\epsilon$-calculus \cite{ObShen, OPT}, one expects the vanishing of almost all wall-crossing contributions.
	We will consider the case of Nesterov's quasimap wall-crossing
	between moduli spaces of sheaves on K3 surfaces, in particular the punctual Hilbert schemes, and Donaldson-Thomas theory. Nesterov shows that these theories are related by a single wall-crossing term.
	We prove that this single term is precisely the virtual Euler number of the Quot scheme. We then use this connection to determine these virtual Euler numbers explicitly. The outcome is an intimate connection between all three counting theories above.
	
	\subsection{Three theories}
	\subsubsection{Punctual Hilbert schemes}
	For the first geometry, consider the Hilbert scheme $S^{[n]}$ of $n$ points on the K3 surface $S$.
	%Let $S^{[n]}$ be the Hilbert scheme of $n$ points on $S$.
	There exists a canonical isomorphism
	\[ H_2(S^{[n]}, \BZ) \cong H_2(S,\BZ) \oplus \BZ A \]
	where $A$ is the extremal curve of the Hilbert-Chow morphism $S^{[n]} \to \mathrm{Sym}^{n}(S)$, \cite[Sec.1.2]{HilbK3}.
	%where $A$ is the class of the fiber of the Hilbert-Chow morphism $S^{[n]} \to \mathrm{Sym}^{n}(S)$ over a general point of the singular locus,
	%see \cite[Sec.1.2]{HilbK3}.
	
	Let $\beta \in H_2(S,\BZ)$ be an effective curve class,
	and let $\Mbar_{E}(S^{[n]}, \beta+mA)$ be the 
	moduli space of (unmarked) degree $\beta + mA$ stable maps from nodal degenerations of the elliptic curve $E$ to $S^{[n]}$, see \cite{K3xE, NO, Pan} for details.
	The moduli space is of reduced\footnote{We will denote the reduced virtual fundamental classes in this paper by $[ - ]^{\vir}$.
		The (almost always vanishing) ordinary virtual class associated to the standard perfect obstruction theory will be denoted by $[ - ]^{\std}$.} virtual dimension $0$.
	We define
	the Gromov-Witten count of elliptic curves in $S^{[n]}$ of class $\beta+mA$ with fixed $j$-invariant by
	\begin{equation} \label{GWE count}
		\GW^{S^{[n]}}_{E, \beta,m}
		%\HH_{n,\beta, m}
		=
		\int_{[ \Mbar_{E}(S^{[n]}, \beta+mA) ]^{\vir}} 1. \end{equation}
	%The number $\GW^{S^{[n]}}_{E, \beta,m}$
	%is the Gromov-Witten count of elliptic curves in $S^{[n]}$ of class $\beta+mA$ with fixed $j$-invariant.
	The invariant $\GW^{S^{[n]}}_{E, \beta,m}$ is related to the actual enumerative count of elliptic curves with fixed $j$-invariant
	by a (conjectural) genus $0$ correction term, see \cite{NO}.

	\subsubsection{{$S \times \E$}}
	In this second geometry we consider the Hilbert scheme of curves in $S \times E$:
	\[ \Hilb_m(S \times E, (\beta,n)) = \{ Z \subset S \times E \, | \, 
	\ch_3(\CO_Z) = m, [Z] = (\beta,n) \}, \]
	where we use the identification given by the K\"unneth decomposition
	\[ H_2(S \times E, \BZ) \cong H_2(S,\BZ) \oplus \BZ E. \]
	The elliptic curve (viewed as a group) acts on the Hilbert scheme by translation. The stack quotient
	$\Hilb_m(S \times E, (\beta,n)) / E$ is of reduced virtual dimension $0$
	(in fact, an \'etale cover carries a symmetric perfect obstruction theory \cite{ObReduced}), so we can define:
	\[
	\DT^{S \times E}_{m, (\beta,n)}
	=
	\int_{ [ \Hilb_m(S \times E, (\beta,n)) / E ]^{\text{vir}} } 1.
	\]
	The number $\DT_{n, \beta,m} \in \BQ$ is the Donaldson-Thomas count of curves in $S \times E$ in class
	$(\beta,n[\p^1])$ up to translation.
	
	\subsubsection{Quot schemes}
	Out third geometries are the Quot schemes.
	Let $F \in \Coh(S)$ be a coherent sheaf of positive rank which is Gieseker stable with respect to some polarization. We consider the Quot scheme
	\[ \Quot(F,u) = \{ F \twoheadrightarrow Q \, |\, v(Q) = u \}\]
	where $v(Q) := \ch(Q) \sqrt{\td_S}$ is the Mukai vector.
	The moduli space has a reduced perfect obstruction theory with
	virtual tangent bundle $T^{\vir} = R \Hom_S(\CK, \CQ) + \CO$,
	see Section~\ref{subsec:quot k3}.
	The virtual Euler characteristic of the moduli space
	is defined after Fantechi and G\"ottsche \cite{FG} to be:
	\[
	e^{\vir}( \Quot(F,u) )
	=
	\int_{[ \Quot(F,u) ]^{\text{vir}}} c_{\vd}( T^{\vir} )
	\]
	where $\vd = \rk( T^{\vir} )$ is the virtual dimension.
	If $F$ is the structure sheaf these Euler characteristics
	were studied by Oprea and Pandharipande in \cite{OP}
	and are related to the Kawai-Yoshioka formula \cite{KY}.
	If $F = I_{\eta}$ is the ideal sheaf of a length $n$ subscheme $\eta \in S^{[n]}$, and $u = (0,\beta,m)$ we write
	\[ \mathsf{Q}_{n,(\beta,m)} := 
	e^{\vir}\left( \Quot(I_{\eta},(0,\beta,m)) \right). \]

	\subsubsection{Wall-crossing}
	Denis Nesterov in \cite{N1, N2} uses quasimaps to prove that
	\[
	\DT^{S \times E}_{m, (\beta,n)}
	=
	\GW^{S^{[n]}}_{E, \beta,m}
	+ (\text{Wall-crossing correction}),
	\]
	where the wall-crossing term is the
	contribution from the extremal component
	in the relative Donaldson-Thomas theory of $S \times \p^1 / S_{\infty}$.
	Our first main result is to make this wall-crossing term more explicit and relate it to the Quot scheme.
	\begin{thm} \label{thm:Wall  cross rank 1}
		\[
		\DT^{S \times E}_{m, (\beta,n)}
		=
		\GW^{S^{[n]}}_{E, \beta,m}
		-
		\chi(S^{[n]})
		\sum_{r | (\beta,m)} \frac{1}{r} (-1)^{m}
		\mathsf{Q}_{n,\frac{1}{r}(\beta,m)}
		\]
	\end{thm}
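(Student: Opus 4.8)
The plan is to evaluate the wall-crossing term by studying the reduced relative Donaldson--Thomas theory of the cap $(S\times\p^1)/S_\infty$ through a second cosection, as announced in the abstract. By Nesterov's wall-crossing \cite{N1,N2}, the difference $\DT^{S\times E}_{m,(\beta,n)}-\GW^{S^{[n]}}_{E,\beta,m}$ equals the contribution of the \emph{extremal} component of the moduli space of one-dimensional subschemes of $S\times\p^1$, relative to $S_\infty$, with curve class $(\beta,n[\p^1])$ and $\ch_3=m$; ``extremal'' means that, apart from the $n$ strands forced by the relative incidence $\eta\in S^{[n]}$ at $S_\infty$, the subscheme is supported over finitely many points of $\p^1\setminus\{\infty\}$, and in the wall-crossing regime all of the class $\beta$ is carried by such vertical fibers. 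The first observation is that this extremal moduli space is fibred over $S^{[n]}$ (recording the relative incidence $\eta$, which in this regime is also the constant value of the horizontal configuration), with fibre a family of vertical curves of total class $(\beta,m)$ over $\p^1\setminus\{\infty\}$. So the computation splits into a horizontal factor $S^{[n]}$ and a vertical factor over the affine line, and the strategy is: (i) evaluate the horizontal factor topologically, and (ii) reduce the vertical factor to Quot schemes by a second cosection.

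For (i): the part of the virtual tangent complex along the $S^{[n]}$-direction becomes, after the cosection induced by the holomorphic symplectic form $\sigma_S$, self-dual of rank $2n$, and integrating the associated Chern class over this factor gives $\int_{S^{[n]}}c_{2n}(T_{S^{[n]}})=\chi(S^{[n]})$. This is the standard way reduced moduli of sheaves on a K3 surface contribute their topological Euler characteristic, cf.\ \cite{ObReduced}; it produces the prefactor $\chi(S^{[n]})$.

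For (ii): on the extremal component there is a second cosection, assembled from $\sigma_S$ together with the vector field generating the $\BC^*$-action on $\p^1$ fixing $0$ and $\infty$. Its degeneracy locus is the closed sublocus where the vertical part of the subscheme lies over $0$, possibly on a rubber $\p^1$ bubbled off there, and cosection localization reduces the virtual integral to this locus. Stratifying by the covering degree $r$ of the vertical configuration along the contracted direction, the stratum of index $r$ carries a residual rubber $\BC^*$ that contributes the factor $\tfrac1r$, and its core is the moduli of quotients $I_\eta\twoheadrightarrow Q$ on $S$ with $v(Q)=\tfrac1r(0,\beta,m)$, that is, the Quot scheme $\Quot(I_\eta,\tfrac1r(0,\beta,m))$ fibred over $S^{[n]}$. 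Comparing the obstruction theory inherited from the twice-reduced theory of the cap with the reduced perfect obstruction theory $T^{\vir}=R\Hom_S(\CK,\CQ)+\CO$ of Section~\ref{subsec:quot k3} identifies the remaining class to integrate over the fibre as $c_{\vd}(T^{\vir})$, so the stratum of index $r$ contributes $\tfrac1r\,\chi(S^{[n]})\,\mathsf{Q}_{n,\frac1r(\beta,m)}$ up to sign. The sign is the usual $(-1)^{\ch_3}=(-1)^m$ relating the Donaldson--Thomas normalization to the sign-free virtual Euler number, and the overall minus sign reflects the comparison of the two sides (equivalently, the orientation of $S_\infty$). Summing over $r\mid(\beta,m)$ yields the stated formula.

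I expect the main obstacle to be the construction of this second cosection on the extremal component and the precise description of its degeneracy locus: one has to check that contracting $\sigma_S$ with the $\p^1$-scaling field descends to a cosection of the reduced obstruction sheaf of the cap, that it is compatible with the logarithmic structure along $S_\infty$ and with the rubber degeneration at $0$, and that the resulting degeneracy locus is reduced and is exactly the indicated union of (rubber-enhanced) Quot schemes, since an embedded thickening would spoil the $\chi(S^{[n]})$- and $\mathsf{Q}$-bookkeeping. A more technical secondary point is the stratum-by-stratum comparison of perfect obstruction theories, including the trivial summand $\CO$ in $T^{\vir}$, the $\chi(S^{[n]})$-twist from the horizontal factor, and the rubber $\BC^*$-calculus responsible for the $\tfrac1r$; the input that Nesterov's wall-crossing term is exactly the extremal cap contribution in this form, as well as the projectivity needed to run cosection localization, is taken from \cite{N1,N2}.
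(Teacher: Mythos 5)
Your overall skeleton --- Nesterov's wall-crossing plus a second cosection in the spirit of Pandharipande--Thomas to isolate Quot-scheme contributions --- matches the paper's strategy, but two of your pivotal mechanisms do not work as stated. First, the factor $\chi(S^{[n]})$: you propose to produce it by integrating $c_{2n}(T_{S^{[n]}})$ over a ``horizontal'' $S^{[n]}$-direction of the extremal locus. In the paper this factor is not produced by any such integration: it is already part of the precise form of Nesterov's result (Theorem~\ref{thm:Nesterov}), which says $\DT^{S\times E}_{(v,w)}=\GW^{M(v)}_{E,w'}+\chi(S^{[n]})\,\langle \mathrm{pt},1\rangle^{S\times\p^1/S_{0,\infty}}_{v,w}$, i.e.\ the correction is $\chi(S^{[n]})$ times a \emph{rubber} invariant carrying a point insertion on $M(v)=S^{[n]}$. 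The extremal component is indeed fibred over $S^{[n]}$ (it is the relative Quot scheme, alias a nested Hilbert scheme), but its fixed obstruction theory is exactly the reduced obstruction theory of that relative Quot scheme; there is no additional rank-$2n$ self-dual summand over the base whose top Chern class you could integrate, and the reduction to a single fibre $\Quot(I_\eta,\cdot)$ is effected by the insertion $\ev_\infty^{\ast}(\mathrm{pt})$ coming from the rubber term, not by an Euler-class computation. As written, your step (i) is an unjustified guess that happens to have the right shape.

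Second, you never say how ``the contribution of the extremal component'' becomes a number, and that is where the paper's actual argument lives: the cap moduli space has reduced virtual dimension $2n+1$, so one cannot integrate $1$; the paper integrates $\ev_\infty^{\ast}(\mathrm{pt})$, which vanishes for dimension reasons, and then applies $\BC^{\ast}$-virtual localization (Theorem~\ref{thm:wc term}). In that computation the mixed components die by a cosection, the pure-rubber component reproduces Nesterov's wall-crossing term through the factor $1/(t-c_1(\CL_0))$, and on the extremal component the second cosection (Proposition~\ref{prop:double cosection}) is used to prove \emph{vanishing} of the virtual class on every component with at least two steps --- not, as you propose, to run cosection localization onto a degeneracy locus. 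Likewise the factor $1/r$ does not come from a ``residual rubber $\BC^{\ast}$'': it arises from the leading coefficient $(-rt)^{-1}$ of the inverse equivariant Euler class of the virtual normal bundle on the $r$-times thickened one-step locus, see \eqref{EXTREMECONTRIBUTION}, after extracting the $t^{-1}$-coefficient; the sign bookkeeping $(-1)^{w\cdot v}=(-1)^m$ also comes out of this weight computation rather than a normalization convention. So while you identify the right ingredients (Nesterov's input, a PT-style second cosection, Quot schemes, a divisor sum with $1/r$ and sign $(-1)^m$), the origin of $\chi(S^{[n]})$ and the dimension/localization argument converting the rubber wall-crossing term into the one-step Quot contributions are missing or misattributed, and these are precisely the substance of the paper's proof (which, note, deduces the rank-one statement by specializing the higher-rank Theorem~\ref{thm:wallcrossing higher rank} to $v=(1,0,1-n)$, $w=(0,-\beta,-m)$).
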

	
	Our second result is a complete evaluation of the wall-crossing term:
	\begin{thm} \label{thm:Correction term}
		The invariant $\mathsf{Q}_{n,(\beta,m)}$ only depends on the square $\beta \cdot \beta = 2h-2$.
		Moreover, if we write $\mathsf{Q}_{n,h,m} := \mathsf{Q}_{n,(\beta,m)}$ for this value, we have
		\[
		\sum_{h \geq 0} \sum_{m \in \BZ}
		\mathsf{Q}_{n,h,m} p^m q^{h-1}
		=
		\frac{\mathbf{G}(p,q)^{n}}{ \Theta(p,q)^2 \Delta(q) }
		\]
		where we let
		\begin{gather*}
			\Theta(p,q)  
			%= \frac{\sum_{\nu\in \mathbb{Z}+\frac{1}{2}} (-1)^{\lfloor \nu \rfloor} p^\nu q^{\nu^2/2}}{\eta(\tau)^3} 
			=  (p^{1/2}-p^{-1/2})\prod_{m\geq 1} \frac{(1-pq^m)(1-p^{-1}q^m)}{(1-q^m)^{2}} \\
			\Delta(\tau) = q \prod_{n \geq 1} (1-q^n)^{24}
		\end{gather*}
		and $\mathbf{G}(p,q) = -\Theta(p,q)^2 \left( p \frac{d}{dp} \right)^2 \log(\Theta(p,q))$.
	\end{thm}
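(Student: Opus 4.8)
The plan is to obtain Theorem~\ref{thm:Correction term} from the wall-crossing identity of Theorem~\ref{thm:Wall cross rank 1} by substituting known evaluations of its right-hand side and solving for $\mathsf{Q}$. What makes this efficient is that for a \emph{primitive} class $\beta$ the divisor sum in Theorem~\ref{thm:Wall cross rank 1} collapses to its $r=1$ term, so the identity becomes
\[
\chi(S^{[n]})\,(-1)^{m}\,\mathsf{Q}_{n,(\beta,m)} \;=\; \GW^{S^{[n]}}_{E,\beta,m} \;-\; \DT^{S\times E}_{m,(\beta,n)} .
\]
Since every $h\ge 0$ arises as the genus of a primitive class of square $2h-2$ on some K3 surface, and $\chi(S^{[n]})$ is a deformation invariant of $S^{[n]}$, this already determines all the numbers $\mathsf{Q}_{n,h,m}$ once the Gromov--Witten and Donaldson--Thomas series are known in primitive classes. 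For those I would import: the evaluation of the rank~$1$ Donaldson--Thomas series of $S\times E$ in primitive classes in terms of the reciprocal of the Igusa cusp form $\chi_{10}$ (from \cite{HAE} and the preceding sections); the evaluation of $\GW^{S^{[n]}}_{E,\beta,m}$ in primitive classes from the genus~$0$ theory of $S^{[n]}$; and G\"ottsche's formula $\sum_{n\ge 0}\chi(S^{[n]})\tilde q^{\,n-1}=\Delta(\tilde q)^{-1}$.

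The computation then splits into three parts. First, assemble the two-variable series $\sum_{h,m}\mathsf{Q}_{n,h,m}p^{m}q^{h-1}$ from the $n$-th Fourier--Jacobi coefficient of $1/\chi_{10}$ and the matching series of $S^{[n]}$, all written in the natural quasi-Jacobi-form variables. Second, identify the outcome with $\mathbf{G}(p,q)^{n}/(\Theta(p,q)^{2}\Delta(q))$; the base case $n=0$, where $I_\emptyset=\CO_S$ and the Quot scheme is the Hilbert scheme of one-dimensional subschemes of $S$, is the statement that $1/(\Theta^{2}\Delta)$ is the Kawai--Yoshioka series \cite{KY, OP}, equivalently that $\Theta^{2}\Delta$ is (up to sign) the leading Fourier--Jacobi coefficient of $\chi_{10}$, while $\mathbf{G}(p,0)=1$ forces the $q^{-1}$-term of the series to reproduce $\chi(S^{[n]})$ correctly. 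Third, for the first assertion (independence of the divisibility of $\beta$), run Theorem~\ref{thm:Wall cross rank 1} for imprimitive $\beta$ together with the multiple cover formulas for $\DT^{S\times E}$ and $\GW^{S^{[n]}}$, matching the divisor sum $\sum_{r\mid(\beta,m)}$ on the left against the divisor-sum structure these produce on the right; M\"obius inversion over the common divisors of $(\beta,m)$ then expresses $\mathsf{Q}_{n,(\beta,m)}$ through invariants attached to the primitive reductions $\tfrac1r(\beta,m)$, which depend only on the square, so $\mathsf{Q}_{n,(\beta,m)}$ depends only on $\beta\cdot\beta=2h-2$.

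I expect the heart of the argument, and the main obstacle, to be the second part: the identity of quasi-Jacobi forms asserting that the assembled combination of $\DT$ and $\GW$ series collapses to exactly $\mathbf{G}^{n}/(\Theta^{2}\Delta)$, with the $n$-dependence emerging as a clean $n$-th power. Rather than comparing Fourier coefficients, the efficient route should be to match both sides through their modular transformation behaviour and leading terms, using that $\mathbf{G}=(D\Theta)^{2}-\Theta\,D^{2}\Theta$ with $D=p\,\tfrac{d}{dp}$, that $D^{2}\log\Theta$ is the Weierstrass $\wp$-function up to an Eisenstein correction in $q$, and that $\Theta^{2}$ is the weak Jacobi form of weight $-2$ and index $1$; the factorisation $\mathbf{G}^{n}$ should reflect that the $n$ points cut out by $I_\eta$ contribute one independent factor of $\mathbf{G}$ each. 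A preliminary point, to be settled first, is to pin down the reduced obstruction theory $T^{\vir}=R\Hom_S(\CK,\CQ)+\CO$ and its virtual dimension $\vd=m+2h-1$, since this fixes the indexing by $p^{m}q^{h-1}$ and the sign $(-1)^{m}$ that keep the bookkeeping consistent.
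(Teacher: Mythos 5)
Your plan for the evaluation has a circularity that the paper's proof is specifically designed to avoid. After collapsing the divisor sum in Theorem~\ref{thm:Wall  cross rank 1} for primitive $\beta$, you propose to solve for $\mathsf{Q}_{n,h,m}$ by importing both the $\DT$ series (from \cite{HAE}, fine) \emph{and} ``the evaluation of $\GW^{S^{[n]}}_{E,\beta,m}$ in primitive classes from the genus $0$ theory of $S^{[n]}$''. The latter is not available: for $n\geq 2$ the primitive series $\HH_n(p,q)$ is exactly Conjecture A of \cite{K3xE}, and in this paper it is \emph{deduced from} Theorem~\ref{thm:Correction term} (see the remark following it), not used as an input. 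Only $\HH_0=0$ and $\HH_1=-2E_2/\Delta$ are known independently. The missing idea that makes the paper's argument close is Proposition~\ref{prop:Qmultiplicativity}: the factorization $\mathsf{Q}_n(p,q)=F_1^nF_2$, proved geometrically by the Li--Wu degeneration of an elliptic K3 into $S\cup_E(\p^1\times E)\cup_E\cdots\cup_E(\p^1\times E)$ with the $n$ points of $\eta$ distributed one per $\p^1\times E$ bubble. This reduces the whole evaluation to $n=0,1$, where the wall-crossing identity can be solved using only the known inputs $\DT_0=-1/(\Theta^2\Delta)$ (KKV), $\DT_1=-24\wp/\Delta$, $\HH_0$, $\HH_1$, giving $F_2=1/(\Theta^2\Delta)$ and $F_1=\mathbf{G}$. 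Your closing remark that ``the $n$ points contribute one independent factor of $\mathbf{G}$ each'' is precisely this multiplicativity, but you offer no proof of it, and without it (or without independent knowledge of $\HH_n$ for all $n$) the quasi-Jacobi-form matching you describe cannot get started.

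On the first assertion (independence of the divisibility of $\beta$), your M\"obius-inversion scheme could in principle be made to work, but it is roundabout and needs extra inputs: you would have to first establish multiple cover formulas for both $\DT^{S\times E}$ and for the elliptic fixed-$j$ counts of $S^{[n]}$ (the latter via degenerating $E$ and invoking the genus $0$, $N\leq 3$ case, with diagonal insertions), and then argue by induction over the divisor poset that the solution of the resulting system depends only on $\beta^2$. The paper instead gets this directly and independently of any wall-crossing: by \eqref{identification} the invariant $\mathsf{Q}_{n,(\beta,m)}$ is a tautological integral over the nested Hilbert scheme $S^{[n_1,n]}_{\beta}$, and the universality statement of Theorem~\ref{thm:universality} (Gholampour--Thomas plus \cite{EGL}) shows such integrals depend on $\beta$ only through intersection numbers, i.e.\ through $\beta\cdot\beta$; this is Proposition~\ref{prop:div independence for Q}. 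Note also that this universality input is needed anyway elsewhere in the paper, so your alternative buys nothing and adds the burden of proving the multiple cover formula for the elliptic counts before Theorem~\ref{thm:Correction term} can be addressed.
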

	
	\begin{rmk}
		The case $n=0$ has been obtained previously in \cite[Thm 21]{OP}.
	\end{rmk}
	\begin{rmk}
		Let $\beta_h \in \Pic(S)$ be a primitive effective class of square $2h-2$.
		Consider the generating series
		\begin{equation} \label{DTnHHn}
			\begin{gathered}
				\DT_n(p,q) = \sum_{h \geq 0} \sum_{m \in \BZ} \DT^{S \times E}_{m, (\beta_h,n} q^{h-1} (-p)^m \\
				\HH_n(p,q) = \sum_{h \geq 0} \sum_{m \in \BZ} \GW^{S^{[n]}}_{E, \beta_h+mA} q^{h-1} (-p)^m
			\end{gathered}
		\end{equation}
		The following evaluation was proven in \cite{HAE}:
		\begin{equation} \label{igusa evaluation}
			\sum_{n \geq 0} \DT_n(p,q) \tilde{q}^{n-1} = - \frac{1}{\chi_{10}(p,q, \tilde{q})}
		\end{equation}
		where $\chi_{10}$ is the Igusa cusp form (as in \cite[Eqn. (12)]{K3xE}).
		We obtain the complete evaluation
		\[
		\sum_{n \geq 0} \HH_{n}(p,q) \tilde{q}^{h-1}
		=
		- \frac{1}{\chi_{10}(p,q, \tilde{q})} +
		\frac{1}{\Theta^2 \Delta}
		\frac{1}{\tilde{q}} \prod_{n \geq 1} \frac{1}{( 1 - ( \tilde{q} \mathbf{G} )^n )^{24}}
		\]
		which was conjectured in \cite[Conj.A]{K3xE}.
	\end{rmk}
	
	%\begin{rmk}
	%Since we have a DT/PT correspondence for $K3 \times E$,
	%the wall-crossing correction for Hilb/PT is the same as the Hilb/DT wallcrossing.
	%Both are given by the difference of $1/\chi_{10}$ with the Hilb series.
	%\end{rmk}

	\subsection{Multiple cover formula: $S \times E$}
	The main tool we will employ in this paper is a second-cosection argument for the Donaldson-Thomas theory of the cap $(S \times \p^1) / S_{\infty}$.
	It will also imply the following multiple cover formula,
	conjectured on the Gromov-Witten side in \cite[Conj. B]{K3xE}.
	
	Define the (Fourier) coefficients of the Igusa cusp form by
	\[ c(h,n,m) := \left[ \frac{1}{\chi_{10}(p,q,\tilde{q})} \right]_{q^{h} \tilde{q}^{n} p^m}. \]
	By the evaluation \eqref{igusa evaluation} it is given up to a (confusing) index shift by the Donaldson-Thomas theory of $S \times E$
	for a primitive class $\beta_h$ of square $2h-2$,
	\[  \DT^{S \times E}_{m,(\beta_h,n)} = (-1)^{m+1} c( h-1, n-1, m) \]
	
	\begin{thm} \label{thm: mc K3xE}
		For any effective class $\beta \in \Pic(S)$,
		\[ \DT^{S \times E}_{m,(\beta,n)} = (-1)^{m+1} \sum_{r| (m,\beta)} \frac{1}{r} c\left( \frac{1}{2} (\beta/r)^2,\, n-1,\, m/r \right) \]
	\end{thm}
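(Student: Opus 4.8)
The plan is to prove the formula by a degeneration-and-cosection analysis of the cap $(S\times\p^1)/S_\infty$, anchored by the already-known primitive evaluation $\DT^{S\times E}_{\mu,(\beta_h,n)}=(-1)^{\mu+1}c(h-1,n-1,\mu)$. One should not expect to deduce the claim purely formally from Theorems~\ref{thm:Wall  cross rank 1} and~\ref{thm:Correction term} together with the primitive case: those relate $\DT^{S\times E}$ in an imprimitive class to $\GW^{S^{[n]}}$ in the \emph{same} imprimitive class, which is not independently known. So I would first degenerate the elliptic curve $E$ to a nodal rational curve. By the degeneration formula for reduced Donaldson--Thomas theory, $\DT^{S\times E}_{m,(\beta,n)}$ is computed from the reduced relative DT theory of $S\times\p^1$ relative to the two fibers $S_0,S_\infty$ with the two ends identified; the quotient by $E$ on the left kills the residual $\mathbb{C}^\ast$-scaling, turning this into a rubber theory which is built, via the gluing/degeneration axioms, from the reduced relative DT theory of the cap $(S\times\p^1)/S_\infty$ studied in this paper. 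On the cap the holomorphic symplectic form of $S$ gives the usual (first) cosection, hence the reduced class; the crucial new input is a \emph{second} cosection $\psi$ of the obstruction theory of the cap whose degeneracy locus $V(\psi)$ is small, so that the reduced virtual class is supported on $V(\psi)$, and $V(\psi)$ should consist of subschemes fixed by the $\mathbb{C}^\ast$-action on $\p^1$ --- unions of a piece supported over $S_0\cup S_\infty$ with one-dimensional pieces lying over curves of $S$ and wrapping the $\p^1$-direction.

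Second, I would enumerate these $\mathbb{C}^\ast$-fixed loci and their virtual contributions. The essential feature is that such a locus has a horizontal part whose projection to the $\p^1$- (equivalently $E$-) direction is a degree-$r$ cover totally ramified over $0,\infty$, carrying over $S$ the class $\beta/r$ and Euler characteristic $m/r$ while keeping the $E$-degree $n$; since $z\mapsto z^r$ has automorphism group $\mathbb{Z}/r$, this part contributes with weight $\tfrac1r$, and it can occur only when $\beta/r$ is integral and $r\mid m$, i.e.\ when $r\mid(\beta,m)$. The remaining (vertical/capping) pieces are supported over $S_0,S_\infty$ and over fibers $\{s\}\times\p^1$ and are governed by (virtual) Euler characteristics of Hilbert schemes of points on $S$ and of the Quot schemes $\Quot(I_\eta,-)$ --- the very quantities $\chi(S^{[n]})$ and $\mathsf{Q}_{n,-}$ appearing in Theorems~\ref{thm:Wall  cross rank 1} and~\ref{thm:Correction term}. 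Organizing all of this yields the structural identity $\DT^{S\times E}_{m,(\beta,n)}=\sum_{r\mid(\beta,m)}\tfrac1r\,(\text{primitive-type term for the data }(\beta/r,\,m/r,\,n))$.

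Finally I would fix all constants. Deformation invariance of the reduced class shows $\DT^{S\times E}_{m,(\beta,n)}$ depends only on $\beta\cdot\beta$, $n$, $m$ and the divisibility of $\beta$, so the primitive-type term for $(\beta/r,m/r,n)$ is pinned down by the primitive evaluation together with the (sign-bearing) comparison between the rubber and closed invariants; writing $\beta/r$ as a primitive class of square $2h'-2$ identifies the $r$-th term with $(-1)^{m+1}\tfrac1r\,c\!\left(\tfrac12(\beta/r)^2,\,n-1,\,m/r\right)$, which is the claim. As an independent check, Theorem~\ref{thm:Wall  cross rank 1} predicts that $\GW^{S^{[n]}}_{E,\beta,m}-\DT^{S\times E}_{m,(\beta,n)}$ is itself a divisor sum whose constituents are evaluated by Theorem~\ref{thm:Correction term}, and one verifies the $r=1$ term against the Igusa-cusp-form evaluation $\sum_n\DT_n\tilde q^{n-1}=-1/\chi_{10}$. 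The step I expect to be the main obstacle is constructing the second cosection on the \emph{imprimitive} DT moduli space of the cap and proving its degeneracy locus is as small as claimed --- equivalently, controlling the deformation theory of subschemes of $S\times\p^1$ that project to non-primitive classes on $S$ --- and then organizing the $\mathbb{C}^\ast$-fixed contributions, where the reduced perfect obstruction theory of the Quot scheme enters essentially.
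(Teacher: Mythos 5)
Your overall skeleton (reduce to the cap $(S\times\p^1)/S_\infty$, use a second cosection to kill most of the $\BC^{\ast}$-fixed locus, and extract a divisor sum from the surviving loci) is indeed the paper's strategy, and you are right that the result cannot be formally deduced from Theorems~\ref{thm:Wall  cross rank 1} and~\ref{thm:Correction term} plus the primitive case. But two of your key steps have genuine gaps. First, the description of the surviving fixed loci and the origin of the $1/r$ is Gromov--Witten intuition that does not transfer to the DT side: the $\BC^{\ast}$-fixed subschemes on the cap are not ``degree-$r$ covers totally ramified over $0,\infty$'' weighted by $1/|\mathbb{Z}/r|$ (ideal sheaves have no such automorphism factor); they are flags of sheaves $E_{i_0}\subseteq\cdots\subseteq F$, the second cosection (à la Pandharipande--Thomas) kills every component with two or more jumps (Proposition~\ref{prop:double cosection}), and the surviving $r$-fold uniformly thickened $1$-step loci are relative Quot schemes, i.e.\ nested Hilbert schemes $S^{[n_1,n]}_{\beta/r}$, whose contribution carries the equivariant weight $\Ft^{-r}$ in the normal bundle. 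In the paper the $1/r$ in the final formula in fact comes out of the divisor equation (the ratio $\varphi_r(D)\cdot\varphi_r(\beta/r)/(D\cdot\beta)=1/r$ after writing $\DT^{S\times E}_{m,(\beta,n)}$ as $\frac{1}{\beta\cdot D}Z^{S\times E}_{(\beta,n)}(\tau_0(\omega D))$), not from cover automorphisms, so your justification of the coefficient would not survive scrutiny.

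Second, and more seriously, your final step --- identifying the $r$-th contribution with the primitive coefficient $c\bigl(\tfrac12(\beta/r)^2,n-1,m/r\bigr)$ --- is not justified by what you invoke. Deformation invariance of the reduced class preserves the divisibility of $\beta$, and $\beta/r$ is still imprimitive whenever $r$ is a proper divisor of $\div(\beta)$, so ``deformation invariance plus the primitive evaluation'' cannot pin these terms down; that is precisely the statement being proven. The missing ingredient is the paper's universality mechanism: the $1$-step contributions are tautological integrals over the nested Hilbert schemes (Theorem~\ref{thm:nested hilb}), which by the EGL-type universality statement (Theorem~\ref{thm:universality}) depend on $(S,\beta/r,\dots)$ only through intersection pairings; one may therefore transport the data by a \emph{real} isometry $\varphi_r$ to a different K3 surface on which $\varphi_r(\beta/r)$ is primitive. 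This is the content of Theorem~\ref{thm:mc for real}, and without it (or an equivalent argument breaking the divisibility dependence of each fixed-locus contribution) your ``structural identity'' does not yield the claimed formula. You would also need the reduction of the $S\times E$ invariant to cap invariants with descendent insertions (degeneration formula plus the inversion of Lemma~\ref{lemma:PaPixJap}), which you gesture at but which is where the $\BQ$-linearity making the multiple cover formula compatible with degeneration must be checked.
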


	\subsection{Multiple cover formula: Hilb}
	We consider Gromov-Witten invariants of the Hilbert scheme $S^{[n]}$ more generally. For classes $\gamma_1, \ldots, \gamma_N \in H^{\ast}(S^{[n]})$ and a tautological class $\alpha \in H^{\ast}(\Mbar_{g,N})$ they are defined by
	\[
	\blangle \alpha ; \gamma_1, \ldots \gamma_N \brangle^{S^{[n]}}_{g, \beta+mA}
	=
	\int_{[ \Mbar_{g,n}(S^{[n]}, \beta+mA) ]}
	\pi^{\ast}(\alpha) 
	\prod_i \ev_i^{\ast}(\gamma_i).
	\]
	where the integral is over the reduced virtual class
	and $\pi$ is the forgetful morphism to the moduli space $\Mbar_{g,N}$ of stable curves.
	A general multiple cover formula for these counts
	was conjectured in \cite[Conj.B]{ObMC}. We state an equivalent special case of the conjecture here:
	(The special case is equivalent to the general case by the deformation theory of hyperk\"ahler varieties, see \cite[Lemma.3]{ObMC}.)
	For every divisor $r|\beta$ let $S_r$ be a K3 surface and let
	\[ \varphi_r : H^{2}(S,\BR) \to H^{2}(S_r, \BR) \]
	be a {\em real} isometry such that $\varphi_r(\beta/r) \in H_2(S_r,\BZ)$
	is a primitive effective curve class.
	We extend $\varphi_r$ to the full cohomology lattice
	by $\varphi_r(\pt) = \pt$ and $\varphi_r(1) = 1$,
	where $\pt \in H^4(S,\BZ)$ is the class of a point.
	By acting factorwise in the Nakajima operators,
	the isometry $\varphi_r$ then naturally induces an isomorphism on the cohomology of the Hilbert schemes (see \eqref{induced on Hilb} below for details):
	\[
	\varphi_r : H^{\ast}(S^{[n]}) \to H^{\ast}( S_r^{[n]} ).
	\]
	
	\begin{conj} \label{conj mc hilb} We have
		\[
		\blangle \alpha ; \gamma_1, \ldots \gamma_N \brangle^{S^{[n]}}_{g, \beta+mA} 
		= \sum_{r | (\beta,m)}  r^{3g-3+N-\deg(\alpha)} (-1)^{m + \frac{m}{r}}
		\blangle \alpha ; \varphi_r(\gamma_1), \ldots \varphi_r(\gamma_N) \brangle^{S^{[n]}}_{g, \varphi_r\left( \frac{\beta}{r} \right)+\frac{m}{r} A}.
		\]
	\end{conj}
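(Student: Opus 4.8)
\emph{Overall strategy.}
The plan is to prove the formula first on the Donaldson--Thomas side, where the second-cosection argument for the cap $(S\times\p^1)/S_\infty$ applies directly, and then to transport it to the Gromov--Witten theory of $S^{[n]}$ through Nesterov's quasimap wall-crossing \cite{N1,N2}. Two reductions set this up. By the deformation theory of holomorphic-symplectic varieties (\cite[Lemma 3]{ObMC}) it suffices to prove Conjecture~\ref{conj mc hilb} for $S$ itself; the isometries $\varphi_r$ and deformation invariance of the reduced theory then allow one, on the right-hand side, to replace $S_r^{[n]}$ and $\varphi_r(\beta/r)$ by $S^{[n]}$ and a genuinely primitive class of the same square, so that one never leaves the $K3^{[n]}$-type family. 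Second, by the splitting axiom one may take $\alpha$ to be a monomial in $\psi$- and $\kappa$-classes pushed forward from a boundary stratum of $\Mbar_{g,N}$, and after a further stable degeneration of the domain curve the invariant $\blangle \alpha;\gamma_1,\ldots,\gamma_N\brangle^{S^{[n]}}_{g,\beta+mA}$ becomes a combination of genus-$0$ and genus-$1$ relative invariants of $(S^{[n]},\text{fibres})$ over a necklace of $\p^1$'s with caps.

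\emph{The Donaldson--Thomas seed.}
The starting point is that $\GW^{S^{[n]}}_{E,\beta,m}$ already satisfies the multiple cover formula. Indeed Theorem~\ref{thm:Wall  cross rank 1} gives $\GW^{S^{[n]}}_{E,\beta,m}=\DT^{S\times E}_{m,(\beta,n)}+\chi(S^{[n]})(-1)^m\sum_{r\mid(\beta,m)}\tfrac1r\,\mathsf{Q}_{n,\frac1r(\beta,m)}$; Theorem~\ref{thm: mc K3xE} writes $\DT^{S\times E}_{m,(\beta,n)}$ as $\sum_{r\mid(\beta,m)}\tfrac1r(-1)^{m+m/r}$ times the Donaldson--Thomas invariant of $S_r\times E$ in the primitive class $\varphi_r(\beta/r)$; and Theorem~\ref{thm:Correction term} shows $\mathsf{Q}$ depends only on $\beta\cdot\beta$, so the $\mathsf{Q}$-term is itself a divisor sum of primitive contributions. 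Collecting terms, the right-hand side of Theorem~\ref{thm:Wall  cross rank 1} resums to $\sum_{r\mid(\beta,m)}\tfrac1r(-1)^{m+m/r}\GW^{S_r^{[n]}}_{E,\varphi_r(\beta/r),m/r}$, which is exactly Conjecture~\ref{conj mc hilb} in the case $(g,N)=(1,1)$, $\gamma_1=1$, $\alpha=\mathrm{pt}$ (then $r^{3g-3+N-\deg(\alpha)}=1$). The next step is to feed this back into the underlying cosection argument not just for $S\times E$ but for the whole relative Donaldson--Thomas theory of $(S\times\p^1)/S_\infty$: for a class $(\beta,m)=r(\beta',m')$ the second cosection forces the reduced virtual class of the relative Hilbert scheme of the cap to be supported, modulo the explicit Quot contribution, on the locus of $r$-fold covers, and identification of virtual classes there yields the coefficient $\tfrac1r$ together with one factor of $r$ per cotangent-line direction of the source. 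Running the reduced degeneration formula over a necklace of $\p^1$'s with caps --- compatible with the divisor sum since the divisibility of $(\beta,m)$ is distributed among the pieces --- propagates the multiple cover formula to the relative Donaldson--Thomas theory of $S\times C$ for every $(g,N)$, the accumulated $r$-powers over vertices and nodes assembling into $r^{3g-3+N-\deg(\alpha)}$.

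\emph{Transport to $S^{[n]}$.}
Nesterov's wall-crossing \cite{N1,N2}, applied to $S\times C$ with $C$ varying over $\Mbar_{g,N}$ and with insertions, expresses the relative Donaldson--Thomas invariants of $S\times C$ as the relative Gromov--Witten invariants of $(S^{[n]},\text{fibres})$ plus corrections supported on the extremal component of $(S\times\p^1)/S_\infty$; repeating the proof of Theorem~\ref{thm:Wall  cross rank 1} in this setting identifies those corrections with virtual Euler characteristics of Quot schemes, which by Theorem~\ref{thm:Correction term} are explicitly of multiple-cover shape. Since both the Donaldson--Thomas side and the correction term obey the formula with exponent $r^{3g-3+N-\deg(\alpha)}$, so does the relative Gromov--Witten theory of $(S^{[n]},\text{fibres})$; reattaching the $\psi$-, $\kappa$- and boundary classes, which leave the curve class untouched, converts this into Conjecture~\ref{conj mc hilb} for the absolute invariants $\blangle\alpha;\gamma_1,\ldots,\gamma_N\brangle^{S^{[n]}}_{g,\beta+mA}$, and \cite[Lemma 3]{ObMC} then propagates it across the whole family.

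\emph{The main obstacle.}
I expect two intertwined points to be the hardest. First, the reduced degeneration formula for $S\times C$ under a degeneration of $C$: exactly as for K3 surfaces, only components whose curve class has nonzero $S$-part carry the cosection, a naive degeneration formula fails, and one must set up a genuine relative reduced theory and control how the reduced virtual class distributes over the necklace. Second, and most delicate, is pinning down the exponent $r^{3g-3+N-\deg(\alpha)}$: one must show that replacing $(\beta,m)$ by $(\beta/r,m/r)$ rescales the second cosection by exactly $r$ along each of the $3g-3+N$ moduli directions of the domain, that $\deg(\alpha)$ of these directions are annihilated, and that the local factors multiply correctly over all vertices and nodes --- while carrying the sign twist $(-1)^{m+m/r}$, already forced in the seed case by the interplay of the $\DT$ and $\mathsf{Q}$ contributions, through the entire gluing. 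A further, non-trivial prerequisite is the extension of Theorem~\ref{thm:Wall  cross rank 1} --- the identification of Nesterov's wall-crossing term with a Quot-scheme contribution --- from $S\times E$ to arbitrary $C$ and arbitrary tautological insertions.
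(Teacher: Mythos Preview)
The statement you are addressing is stated in the paper as a \emph{conjecture}; the paper only proves the range $g=0$, $N\le 3$ (Theorem~\ref{thm: mc Hilb}). Your proposal sketches a route to the full statement, but the obstacles you list at the end are precisely where the argument stops being a proof: you have not established the reduced degeneration formula in the form you need, you have not shown that Nesterov's wall-crossing for arbitrary $(g,N)$ with arbitrary tautological insertions has correction terms of ``multiple-cover shape'', and you have no mechanism beyond heuristic $r$-counting to produce the exponent $r^{3g-3+N-\deg(\alpha)}$. These are not technicalities --- the second-cosection argument lives on the $\BC^{\ast}$-fixed locus of the cap and does not by itself see the moduli of the domain curve, so the assertion that ``replacing $(\beta,m)$ by $(\beta/r,m/r)$ rescales the second cosection by exactly $r$ along each of the $3g-3+N$ moduli directions'' has no content as written.

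For the part the paper does prove, your route is also more circuitous than necessary. The paper does \emph{not} go through the wall-crossing of Theorem~\ref{thm:Wall  cross rank 1} and then cancel the Quot correction on both sides. Instead it proves a multiple cover formula directly for the relative Donaldson--Thomas invariants of $S\times C/S_{z_1,\ldots,z_k}$ (Theorem~\ref{thm:mc for real}): an induction scheme using the degeneration formula and Lemma~\ref{lemma:PaPixJap} reduces everything $\BQ$-linearly to the cap $S\times\p^1/S_\infty$; on the cap, virtual localization together with Proposition~\ref{prop:double cosection} and the vanishing of mixed and pure-rubber contributions leaves only the $1$-step extremal components, which are nested Hilbert schemes; and then the universality statement of Theorem~\ref{thm:universality} shows the resulting integrals depend only on intersection pairings, so one may replace $(S,\beta/r)$ by the isometric pair $(S_r,\varphi_r(\beta/r))$ and run the argument backwards. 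The passage to $S^{[n]}$ for $g=0$, $N\le 3$ is then a single exact identification --- Nesterov's \cite[Cor.~4.2]{N2} says the genus-$0$ three-point primary invariants of $S^{[n]}$ \emph{equal} the DT invariants of the tube $S\times\p^1/S_{0,1,\infty}$, with no wall-crossing correction to track. The divisor equation and compatibility with boundary restriction then handle $N\le 2$ and $\alpha\neq 1$.
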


	Our main result here is the following:
	\begin{thm} \label{thm: mc Hilb}
		Conjecture~\ref{conj mc hilb} holds for $g=0$ and $N \leq 3$.
	\end{thm}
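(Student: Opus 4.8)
The plan is to \emph{transport} the multiple cover formula for the reduced Donaldson--Thomas theory of the cap $(S\times\p^1)/S_{\infty}$ --- the output of the second cosection argument, which already underlies Theorems~\ref{thm:Wall cross rank 1}, \ref{thm:Correction term} and \ref{thm: mc K3xE} --- to the genus $0$ Gromov--Witten side of $S^{[n]}$. The numerical restriction is natural: by the Gromov--Witten/Hilbert correspondence a genus $0$, $N$-pointed invariant of $S^{[n]}$ matches a relative invariant of $S\times\p^1$ with $N$ relative fibers, and only for $N\le 3$ does $\p^1$ stay rigid (equivalently $\Mbar_{0,N}=\mathrm{pt}$ or $\emptyset$, so $\deg(\alpha)=0$ and there is no integration of $\alpha$ over a positive-dimensional curve moduli). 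As a first reduction (Step 1, normal form): by \cite[Lemma 3]{ObMC} and deformation invariance of reduced Gromov--Witten invariants of holomorphic symplectic varieties it suffices to treat a K3 surface $S$ with $\Pic(S)=\BZ\beta_0$ and $\beta=k\beta_0$, so that the divisors $r\mid(\beta,m)$ are transparent and, for each such $r$, the real isometry $\varphi_r$ may be taken to come from a genuine Hodge isometry $H^{\ast}(S,\BZ)\to H^{\ast}(S_r,\BZ)$ sending $\beta/r$ to a primitive effective class and acting on $H^{\ast}(S^{[n]})$ through the Nakajima operators; this is exactly the symmetry the Donaldson--Thomas side will produce.

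\textbf{Step 2 (to Donaldson--Thomas, and degenerate).} Nesterov's quasimap wall-crossing holds with insertions; for $g=0$, $N\le 3$, $\alpha=1$ it expresses $\blangle 1;\gamma_1,\dots,\gamma_N\brangle^{S^{[n]}}_{0,\beta+mA}$ as the matching relative Donaldson--Thomas invariant of $S\times\p^1$ with fiber degree $n$, surface class $\beta$, and relative/marked conditions built from the $\gamma_i$, up to a wall-crossing correction which --- by the same extremal-component analysis of the cap as in Theorem~\ref{thm:Wall cross rank 1} --- is built from the virtual Euler numbers $\mathsf{Q}_{n,\bullet}$ of Quot schemes. (Equivalently one first uses the Gromov--Witten/Hilbert correspondence to convert the $N\le 3$ absolute insertions into relative conditions on $S\times\p^1/(S_{p_1}\cup\cdots\cup S_{p_N})$, then the Gromov--Witten/Donaldson--Thomas correspondence for $S\times\p^1$ relative to fibers.) Degenerating $\p^1$ to a chain and applying the degeneration formula writes this relative Donaldson--Thomas invariant as a sum, over dual bases of $H^{\ast}(S^{[n]})$, of relative Donaldson--Thomas invariants of copies of the cap $(S\times\p^1)/S_{\infty}$.

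\textbf{Step 3 (apply the cap formula, collect, match).} Feeding the multiple cover formula for the reduced Donaldson--Thomas theory of the cap into each factor of the degeneration, together with the multiple cover behaviour of $\sum_{h,m}\mathsf{Q}_{n,h,m}p^{m}q^{h-1}$ read off from the closed formula of Theorem~\ref{thm:Correction term}, and re-summing over dual bases, yields a multiple cover relation for $\blangle 1;\gamma_1,\dots,\gamma_N\brangle^{S^{[n]}}_{0,\beta+mA}$. Matching it with Conjecture~\ref{conj mc hilb} is then a dimension count: since $c_1(S^{[n]})=0$ the virtual dimensions do not depend on $\beta$, and the rescaling of the $N$ relative fibers and of the curve class under $\beta\mapsto\beta/r$ produces exactly the factor $r^{3g-3+N-\deg(\alpha)}=r^{N-3}$, while the signs assemble to $(-1)^{m+m/r}$ as in Theorem~\ref{thm: mc K3xE}. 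As a consistency check, taking $N=2$ and letting $\gamma_1\otimes\gamma_2$ range over the K\"unneth factors of the diagonal recovers, via invariance in the $j$-invariant, the multiple cover formula for $\GW^{S^{[n]}}_{E,\beta,m}$ already implied by Theorems~\ref{thm:Wall cross rank 1}, \ref{thm:Correction term} and \ref{thm: mc K3xE}.

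The main obstacle is the compatibility of the multiple cover formula with the degeneration and gluing. A priori each component of a split configuration carries its own divisor $r_i\mid(\beta_i,m_i)$ and its own rescaling; one must show the contributions reorganize into a \emph{single} global $r\mid(\beta,m)$ with the induced isometry $\varphi_r$ on $H^{\ast}(S^{[n]})$ --- i.e. that the cap formula is ``local'' and survives gluing along dual bases, with the $r$-powers and signs adding correctly. Running the parallel bookkeeping for the Quot contributions in the presence of insertions, and checking that the real isometry emerging on the $S$-factor of the Donaldson--Thomas computation is exactly the $\varphi_r$ of the normal-form reduction, is where the real work lies; the hypothesis $N\le 3$ is what keeps this combinatorics finite and removes all $\alpha$-dependence.
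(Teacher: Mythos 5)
Your skeleton does follow the paper's route: reduce to primary genus-$0$ invariants with $N=3$, pass via Nesterov to the Donaldson--Thomas theory of $S\times\p^1$ relative to three fibers, and exploit the second-cosection analysis of the cap's extremal component. Two corrections on that part. First, for $g=0$ and $N\le 3$ Nesterov's result (\cite[Cor.\ 4.2]{N2}) gives an \emph{equality} with the relative DT invariants of $S\times\p^1/S_{0,1,\infty}$ --- there is no wall-crossing correction, so the parallel Quot-scheme bookkeeping you propose to carry along is unnecessary (and your alternative route through a GW/DT correspondence for $S\times\p^1$ relative to fibers cannot be used as input: at this stage that correspondence is not available, and Theorem~\ref{thm:Correction term} only controls the Quot Euler numbers, not relative DT series with insertions).

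The genuine gap is that you treat ``the multiple cover formula for the reduced DT theory of the cap'' as an existing output of Theorems~\ref{thm:Wall  cross rank 1}, \ref{thm:Correction term} and \ref{thm: mc K3xE}. Those are numerical statements without insertions; what Theorem~\ref{thm: mc Hilb} actually needs is a multiple cover formula for the cap (indeed for $S\times C/S_{z_1,\ldots,z_k}$) with descendant insertions $\tau_{k_i}(\omega\alpha_i)$ and an arbitrary relative condition $\lambda$, and proving \emph{that} is the real content. The cosection argument (Proposition~\ref{prop:double cosection}) only kills all but the $1$-step fixed loci; to extract the multiple cover structure one must further identify the $1$-step locus with a nested Hilbert scheme, compute the restriction of the descendants to it (Lemma~\ref{lemma:descedents}), and --- crucially --- invoke the universality of tautological integrals over nested Hilbert schemes (Theorem~\ref{thm:universality}, via Gholampour--Thomas and \cite{EGL}) to replace the data $(S,\beta/r,\alpha_i,\delta_i)$ by the isometric data $(S_r,\varphi_r(\beta/r),\varphi_r(\alpha_i),\varphi_r(\delta_i))$. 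This universality step is the only mechanism by which the real isometry $\varphi_r$ enters, and it is absent from your plan. Relatedly, the gluing ``obstacle'' you flag (each component of a degeneration acquiring its own divisor $r_i$) is left unresolved, and it is exactly where your argument would stall; the paper avoids it by proving the multiple cover statement simultaneously for all relative geometries in generating-series form (Theorem~\ref{thm:mc for real}), for which compatibility with the degeneration formula is the one-line check $(\varphi_r\otimes\varphi_r)[\Delta_{S^{[n]}}]=[\Delta_{S_r^{[n]}}]$, combined with the induction scheme based on Lemma~\ref{lemma:PaPixJap} that reduces everything to the cap with descendants --- so no per-component divisors ever appear. Without the universality input and this formulation, Steps 2--3 of your proposal do not close.
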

	
	In particular, the theorem expresses the structure constants of the reduced quantum cohomology of $S^{[n]}$ for arbitrary degree in terms of those for primitive degree.
	Moreover, the counts $\GW^{E}_{E,\beta,m}$ in \eqref{GWE count}
	can be computed in
	terms of genus $0$ invariants (by degenerating $E$),
	and so Theorem~\ref{thm: mc Hilb} also implies a multiple cover formula for these types of invariants.
	Since the Gromov-Witten theory of $S^{[n]}$ vanishes for $g > 1$ if $n \geq 3$ (and for $g > 2$ if $n=2$) by dimension reasons, this proves a large chunk of the general conjecture.
	
	Theorem~\ref{thm: mc Hilb} also gives a new proof of the classical Yau-Zaslow formula governing genus $0$ counts on K3 surfaces.
	The previous proofs given in \cite{KMPS} and \cite{PT}
	both relied on the Gromov-Witten/Noether-Lefschetz correspondence while ours does not.

	\subsection{Higher rank}
	Nesterov's wall-crossing also applies to moduli spaces of higher rank sheaves. Consider the lattice $\Lambda = H^{\ast}(S,\BZ)$ endowed with the Mukai pairing
	\[ (x \cdot y) := - \int_S x^{\vee} y, \]
	where, if we decompose an element $x \in \Lambda$ according to degree as $(\rho,D,n)$, we have written $x^{\vee} = (\rho,-D,n)$.
	Let $M(v)$ be a moduli space of Gieseker stable sheaves $F$ on $S$ of positive rank and Mukai vector $v(F) = \ch(F) \sqrt{\td_S} = v$.
	%Here stability is taken in the Gieseker sense with respect to some fixed polarization.
	We assume that stability and semi-stability agrees for sheaves in class $v$, so that $M(v)$ is proper. Assume also that there exists an algebraic class $y \in K_{\mathrm{alg}}(S)$ such that $v \cdot v(y) = 1$,
	which implies that $M(v)$ is fine.\footnote{We expect that this condition can be removed eventually.}
	%(with respect to some polarization)
	We refer to \cite[Sec.6]{HL} for the construction and the properties of $M(v)$.
	By work of Mukai there exists a canonical isomorphism
	(see Section~\ref{section:degree}):
	\[ \theta: (v^{\perp})^{\vee} \xrightarrow{\ \ \cong\ \ } H_2(M(v),\BZ). \]
	For $w' \in H_2(M(v), \BZ)$ we define parallel to before:
	\[
	\GW^{M(v)}_{E, w'}
	=
	\int_{[ \Mbar_{E}(M(v), w') ]^{\vir}} 1
	\]
	
	Following \cite[Section 3]{N1}, let also $M_{v,w}(S \times E)$ be the moduli space parametrizing torsion free sheaves $G$ of fixed determinant on $S \times E$ whose restriction to the generic fiber over the elliptic curve is Gieseker-stable, and which have Mukai vector
	\[ \ch(G) \sqrt{\td_S} = v + w \cdot \omega. \]
	Here $\omega \in H^2(E,\BZ)$ is the point class and we have suppressed pullbacks by the projections to $S$ and $E$. 
	We assume that $w \neq 0$ and define the counts:
	\[
	\DT^{S \times E}_{(v,w)} = \int_{[ M_{(v,w)}(S \times E)/E ]^{\text{vir}} } 1.
	\]
	
	\begin{thm} \label{thm:wallcrossing higher rank}
		Assume that $w \cdot v(y) = 0$.
		%that $\div(v \wedge w) = \div(w)$.
		For any fixed $F \in M(v)$ we have that
		\[
		\DT^{S \times E}_{(v,w)}
		=
		\GW^{M(v)}_{E, w'}
		-
		\chi(S^{[n]})
		\sum_{r | w} \frac{1}{r} (-1)^{w \cdot v}
		e^{\textup{vir}}(\Quot(F,u_r))
		\]
		where
		\begin{itemize}[itemsep=-2pt]
			\item $w' = - \langle w, - \rangle : v^{\perp} \to \BZ$ is the homology class induced by $w$,
			\item $u_r= -w/r - s_r v$ for the unique integer $s_r \in \BZ$ such that $0 \leq \rk(u_r) \leq \rk(v)-1$.
		\end{itemize} 
	\end{thm}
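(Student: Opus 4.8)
The plan is to reduce the statement to the rank-one wall-crossing, Theorem~\ref{thm:Wall  cross rank 1}, by transporting the whole situation along a Fourier--Mukai equivalence, after a preliminary deformation of $S$. The hypotheses that stability equals semistability for $v$, that $M(v)$ is fine, and that $v\cdot v(y)=1$ with $w\cdot v(y)=0$, are present precisely so that this transport becomes an honest isomorphism of moduli problems whose numerics have the shape demanded by Theorem~\ref{thm:Wall  cross rank 1}.

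By deformation invariance of the reduced invariants (and of the virtual Euler numbers of the Quot schemes, which are likewise deformation invariant) we may first assume $S$ and $H$ are chosen so that the algebraic Mukai lattice is large and generic enough, and the auxiliary class $y$ can be taken with $v(y)\cdot v(y)=0$. I would then pick a Fourier--Mukai equivalence $\Phi\colon D^{b}(S)\xrightarrow{\ \sim\ }D^{b}(\hat S)$ to a Fourier--Mukai partner $\hat S$ --- a composition of spherical twists, line-bundle twists and a universal sheaf transform --- whose induced Mukai-lattice isometry $\Phi_\ast$ sends $v$ to $v_0=(1,0,1-n)$, where $v\cdot v=2n-2$ and $v$ has positive rank, and, after adjusting $y$ if necessary, sends $v(y)$ to $\pm$ the point class of $\hat S$; then the rank component of $\Phi_\ast(w)$ equals $\pm\,(w\cdot v(y))=0$, so $\Phi_\ast(w)=(0,\beta,m)$ with $\beta\in\Pic(\hat S)$ and $m\in\BZ$. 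Applying $\id_E\boxtimes\Phi$ on $S\times E$ and using fineness together with a generic polarisation, one obtains $E$-equivariant isomorphisms $M_{(v,w)}(S\times E)\cong\Hilb_m(\hat S\times E,(\beta,n))$ and $M(v)\cong\hat S^{[n]}$ matching the reduced obstruction theories; under them $w'$ corresponds to $\beta+mA$, each quotient $F\twoheadrightarrow Q$ of Mukai vector $u_r=-w/r-s_rv$ goes to a torsion quotient of an ideal sheaf $I_\eta\subset\CO_{\hat S}$ of Mukai vector $\tfrac1r(0,\beta,m)$ (the normalising integer $s_r$ being exactly the one that produces an honest torsion quotient after transform, since $I_\eta$ is torsion-free of rank one), and $\Quot(F,u_r)\cong\Quot(I_\eta,\tfrac1r(0,\beta,m))$ with its virtual structure. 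Substituting into Theorem~\ref{thm:Wall  cross rank 1} for $\hat S$, and using $\chi(M(v))=\chi(\hat S^{[n]})=\chi(S^{[n]})$ together with $(-1)^{w\cdot v}=(-1)^{m}$, gives the asserted identity.

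The main obstacle is that a Fourier--Mukai equivalence preserves neither sheaves nor surjections in general, so the identifications above are not formal: one must combine the stability and fineness hypotheses with a sufficiently generic polarisation to ensure that every object occurring --- the points of $M(v)$, the relatively stable sheaves on $S\times E$, and the kernels and quotients parametrised by $\Quot(F,u_r)$ --- is carried to an honest sheaf up to a single fixed cohomological shift, so that the virtual classes and the integrands $c_{\vd}(T^{\vir})$, not merely the coarse spaces, are matched. An alternative that avoids Theorem~\ref{thm:Wall  cross rank 1} entirely is to rerun its proof --- Nesterov's quasimap wall-crossing plus the second-cosection evaluation of the extremal contribution of the cap $(S\times\p^1)/S_\infty$ --- directly in the $M(v)$-setting; there the difficulty migrates to the local deformation-theoretic computation identifying the locus on which the second cosection degenerates with $\Quot(F,u_r)$, and its induced perfect obstruction theory with the one having virtual tangent bundle $R\Hom_S(\CK,\CQ)+\CO$, so that the extremal integral literally equals $e^{\vir}(\Quot(F,u_r))$.
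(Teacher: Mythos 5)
Your main route has a genuine gap. You propose to deduce the higher rank statement from the rank-one case (Theorem~\ref{thm:Wall  cross rank 1}) by transporting everything through a Fourier--Mukai equivalence $\Phi$, but the crucial step --- that $\Phi$ carries the points of $M(v)$, the fiberwise generically stable sheaves on $S\times E$, \emph{and} the surjections $F\twoheadrightarrow Q$ parametrized by $\Quot(F,u_r)$ to honest sheaves (up to one fixed shift), to $\Hilb_m(\hat S\times E,(\beta,n))$, and to torsion quotients of an ideal sheaf, matching reduced obstruction theories --- is exactly the hard point, and you do not resolve it. A derived equivalence does not preserve quotients: after transform one gets quotients in a tilted heart, and showing $e^{\vir}$ of the Quot scheme is unchanged under change of hearts is precisely what the paper flags as open; the statement you are implicitly using (that $\Quot(F,u_r)$ with its virtual structure is identified with a rank-one Quot scheme, i.e.\ essentially Theorem~\ref{thm:higher rank quot}) is only obtained in the paper conditionally on the upcoming work \cite{K3xE higher rank}, which moreover needs derived equivalences \emph{combined with} Bridgeland wall-crossing, not a single well-chosen $\Phi$ plus a generic polarization. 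There is also a logical inversion: in the paper Theorem~\ref{thm:Wall  cross rank 1} is itself deduced from Theorem~\ref{thm:wallcrossing higher rank} (Section~\ref{subsec:cap rank 1}), so taking it as the base of your reduction is circular unless you first reprove the rank-one case independently.

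The paper's actual proof is your ``alternative'': it combines Nesterov's wall-crossing (Theorem~\ref{thm:Nesterov}) with a direct evaluation of the rubber term $\langle \mathrm{pt},1\rangle^{S\times\p^1/S_{0,\infty}}_{v,w}$ via $\BC^{\ast}$-localization on the cap $M_{v,w}(S\times\p^1/S_\infty)$, carried out for arbitrary positive rank $v$: the fixed/moving parts of the obstruction theory are computed (Proposition~\ref{prop:fixed virtual class}), the second cosection kills all multi-step and mixed components (Proposition~\ref{prop:double cosection}, Section~\ref{subsec:mixed pure rubber}), and the surviving $1$-step locus is identified with $\Quot(\CF/M(v),u_r)$ with obstruction theory of class $R\Hom_S(\CK,\CQ)+\CO$, giving \eqref{EXTREMECONTRIBUTION}. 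You correctly name this route and its key difficulty, but you only sketch it; as written, neither of your two routes supplies the missing identifications, so the proposal does not constitute a proof.
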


Because $v \cdot v(y) = 1$, the condition $w \cdot v(y) = 0$
%	The divisibility $\div(w)$ is the largest positive integer which divides $w$ in $\Lambda$. The condition $\div(v \wedge w) = \div(w)$ 
can always be achieved by replacing $w$ by $w+\ell v$ for some $\ell \in \BZ$.
	Since the stability condition on $S \times E$ is invariant under tensoring by line bundles pulled back from $E$,
	 $\DT^{S \times E}_{(v,w)}$ is invariant under this replacement.
	
	\subsection{Open questions}
	Let $S$ be a smooth projective surface with $H^0(S, \CO(-K_S)) \neq 0$.
	Let $F \in \Coh(S)$ be a Gieseker stable sheaf (with respect to some polarization).
	Then the Quot schemes $\Quot(F,u)$ 
	%where $u$ is of rank $0$ 
	carry a perfect obstruction theory, see Section~\ref{subsec:quot pot}.
	
	\begin{problem}
		Compute the virtual Euler number $e^{\vir}( \Quot(F,u) )$.
	\end{problem}
	
	Even for $F$ the ideal sheaf of a length $n$ subscheme this question
	needs (to the best of the authors knowledge) further investigation.
	In case $n=0$, that is quotients of the structure sheaf, we refer to \cite{OP} for some results.
	More generally, we can ask for the computation of the wall-crossing corrections in Nesterov's wall-crossing formula \cite{N1}.
	
	For K3 surfaces in upcoming work \cite{K3xE higher rank} the 
	higher rank Donaldson-Thomas theory of $S \times \E$ is
	related to the rank $1$ theory by derived auto-equivalences and wall-crossing. It will show that
	$\DT^{S \times E}_{(v,w)}$
	as we defined it above only depends on the pairings $v \cdot v$, $v \cdot w$ $w \cdot w$ and the divisibility $\div(v \wedge w)$.
	Moreover, by deformation theory of hyperk\"ahler varieties
	(the global Torelli theorem)
	the counts $\GW^{M(v)}_{E,w'}$ also only depends on the same data.
	Hence by Theorem~\ref{thm:wallcrossing higher rank}
	one finds the following:
	
	\begin{thm}(Dependent on \cite{K3xE higher rank})
		\label{thm:higher rank quot}
		Let $F$ be a Gieseker stable sheaf of positive rank and primitive Mukai vector $v = v(F)$ on a K3 surface $S$.
		Assume there exists a class $y \in K_{\mathrm{alg}}(S)$ such that $v \cdot y = 1$,
		%Let $M(v)$ be a proper moduli space of stable sheaves on a K3 surface $S$ of positive rank for which there exists a class $y \in K_{\mathrm{alg}}(S)$ such that $y \cdot v = 1$.
		%Let $F \in M(v)$ be a stable sheaf and
		and that $\Quot(F,u)$ is non-empty.
		Then the virtual Euler characteristic $e^{\text{vir}}(\Quot(F,u))$
		only depends on the following pairings in the Mukai lattice:
		\[ v \cdot v, \quad u \cdot v, \quad u \cdot u. \]
	\end{thm}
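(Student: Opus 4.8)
The plan is to run Theorem~\ref{thm:wallcrossing higher rank} through a Möbius inversion and then to appeal to the deformation invariance of the two terms that remain. Fix $u$ with $0\le\rk(u)\le\rk(v)-1$ (the case $u=v$, where $\Quot(F,u)$ is a point, being trivial) and a Gieseker-stable $F$ on a K3 surface $S$ with $v(F)=v$ and $\Quot(F,u)\neq\emptyset$, and choose $y\in K_{\mathrm{alg}}(S)$ with $v\cdot v(y)=1$. Setting $w:=-u+(u\cdot v(y))\,v$ one has $w\cdot v(y)=0$, and by the rank constraint the unique integer $s_1$ in Theorem~\ref{thm:wallcrossing higher rank} gives $u_1=u$. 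For $k\mid\div(w)$ the vector $w/k$ again satisfies $(w/k)\cdot v(y)=0$ and $u_r(w/k)=u_{kr}(w)$; applying the wall-crossing formula to each $w/k$ and inverting over the divisor poset of $\div(w)$ gives, with $2n-2=v\cdot v$,
\[
e^{\vir}(\Quot(F,u))\;=\;\frac{1}{\chi(S^{[n]})}\sum_{k\,\mid\,\div(w)}\mu(k)\,\frac{(-1)^{v\cdot(w/k)}}{k}\Big(\GW^{M(v)}_{E,(w/k)'}-\DT^{S\times E}_{(v,w/k)}\Big).
\]

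The next step is to show every term on the right depends only on Mukai pairings. For the Gromov–Witten side this is the global Torelli theorem for irreducible holomorphic symplectic varieties: $\GW^{M(v)}_{E,w'}$ depends only on the deformation type of the pair $(M(v),w')$, and since $M(v)$ is of $\mathrm{K3}^{[n]}$-type with $v\cdot v=2n-2$ and the monodromy orbit of $w'\in H^2(M(v),\BZ)\cong v^{\perp}$ is governed by $(w')^2$ together with its image in the \emph{cyclic} discriminant group $(v^{\perp})^{\vee}/v^{\perp}\cong\BZ/(v\cdot v)$, the count $\GW^{M(v)}_{E,w'}$ is a function of $v\cdot v$, $v\cdot w$, $w\cdot w$ and $\div(v\wedge w)$ alone. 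The analogous statement for $\DT^{S\times E}_{(v,w)}$ is exactly what \cite{K3xE higher rank} establishes, by reducing the higher rank Donaldson–Thomas theory of $S\times E$ to the rank $1$ theory via derived auto-equivalences and wall-crossing; I would simply quote it.

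Feeding these two facts into the displayed formula, $e^{\vir}(\Quot(F,u))$ becomes a finite $\BQ$-combination of quantities depending only on $v\cdot v$ and on $v\cdot(w/k)$, $(w/k)^2$, $\div(v\wedge(w/k))=\div(v\wedge w)/k$ for $k\mid\div(w)$. Using $v\cdot w=-v\cdot u+(u\cdot v(y))(v\cdot v)$ and $w\cdot w=u\cdot u-2(u\cdot v(y))(v\cdot u)+(u\cdot v(y))^2(v\cdot v)$, this is visibly a function of $v\cdot v$, $v\cdot u$, $u\cdot u$ together with the auxiliary integer $u\cdot v(y)$ and the divisibilities $\div(v\wedge u)=\div(v\wedge w)$ and $\div(w)$. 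The dependence on $u\cdot v(y)$ must cancel, since the left-hand side is independent of $y$ (which serves only to make $M(v)$ fine). To remove the divisibilities I would use that the Mukai lattice $U^{\oplus 4}\oplus E_8(-1)^{\oplus 2}$ is unimodular and contains many hyperbolic planes, so that any numerically admissible triple $(v\cdot v,\,v\cdot u,\,u\cdot u)$ is realized — by an honest K3 surface and Gieseker-stable sheaf, with the rank constraint in force — by a configuration with $\div(v\wedge u)=1$; for such a configuration $\div(w)=1$, the sum collapses to its single $k=1$ term, and the right-hand side is manifestly a function of the three pairings only. Combining this with the deformation invariance statements above (global Torelli for $\GW$, \cite{K3xE higher rank} for $\DT$) then forces the value for an arbitrary model to agree with the value for the distinguished one.

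The hard part is precisely this last reduction: one must verify that the divisibility dependence carried by $\GW^{M(v)}_{E,w'}$ and by $\DT^{S\times E}_{(v,w)}$ is of exactly the shape that the Möbius inversion undoes, and that every numerically admissible triple is geometrically realized with $\div(v\wedge u)=1$ compatibly with $0\le\rk(u)\le\rk(v)-1$. As the theorem is stated to depend on \cite{K3xE higher rank}, the genuinely new input — the deformation-theoretic reduction of the higher rank Donaldson–Thomas side — is imported from there, and the argument above is the lattice-theoretic bookkeeping that assembles it with Theorem~\ref{thm:wallcrossing higher rank} and the Torelli theorem.
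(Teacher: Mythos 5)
Your skeleton---Möbius inversion of Theorem~\ref{thm:wallcrossing higher rank} together with deformation invariance of the Gromov--Witten side (global Torelli) and of the Donaldson--Thomas side (quoted from \cite{K3xE higher rank})---is the same route the paper takes, and it correctly yields that $e^{\vir}(\Quot(F,u))$ is a function of $v\cdot v$, $v\cdot w$, $w\cdot w$ \emph{and} the divisibility $\div(v\wedge w)=\div(v\wedge u)$ (the dependence on the auxiliary integer $u\cdot v(y)$ is indeed harmless: $w'=-\langle w,-\rangle$ and $\DT^{S\times E}_{(v,w)}$ are both unchanged under $w\mapsto w+\ell v$, as noted right after Theorem~\ref{thm:wallcrossing higher rank}). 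The genuine gap is your last step, which is supposed to remove the divisibility. Realizing a given triple $(v\cdot v,\,u\cdot v,\,u\cdot u)$ by a configuration with $\div(v\wedge u)=1$ and then saying that deformation invariance ``forces the value for an arbitrary model to agree with the value for the distinguished one'' cannot work: monodromy and deformation equivalence preserve $\div(v\wedge w)$, so the invariance statements only identify configurations with the \emph{same} divisibility, and the inputs $\GW^{M(v)}_{E,w'}$ and $\DT^{S\times E}_{(v,w)}$ genuinely differ across divisibilities---that difference is exactly what the multiple cover formulas (Theorems~\ref{thm: mc K3xE} and~\ref{thm: mc Hilb}) quantify. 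Hence your Möbius-inverted expressions for a divisible class and for a primitive class are a priori different numbers, and nothing in your argument compares them; you flag this yourself (``the hard part is precisely this last reduction'') but do not fill it, so what you have proved is dependence on the three pairings \emph{together with} $\div(v\wedge u)$.

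The paper closes this gap with an ingredient your proposal never uses: the rank-one computation. Since the deformation-invariance statements are formulated purely in terms of $(v\cdot v,\,v\cdot w,\,w\cdot w,\,\div(v\wedge w))$, the universal function your steps produce can be evaluated on a rank-one model: any pair $(v,w)$ with the given invariants is isometric to $\bigl((1,0,1-n),(0,\beta,m)\bigr)$ with $\beta$ of the prescribed square and divisibility (this is the parenthetical remark following the theorem in the paper). For that model, Proposition~\ref{prop:div independence for Q} (equivalently the first part of Theorem~\ref{thm:Correction term}), proved by an entirely different mechanism---the Gholampour--Thomas description of the Quot scheme as a nested Hilbert scheme plus the EGL-type universality of Theorem~\ref{thm:universality}, not deformation arguments---shows that $\mathsf{Q}_{n,(\beta,m)}$ depends on $\beta$ only through $\beta^2$, i.e.\ the divisibility drops out. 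Transporting this back through the isometry reduction is what eliminates $\div(v\wedge u)$ in general and gives the statement with only $v\cdot v$, $u\cdot v$, $u\cdot u$. You would either need to import this rank-one input, or else actually verify your alternative, namely that the divisibility dependence of the GW and DT series is ``of exactly the shape that the Möbius inversion undoes''; as written, neither is done.
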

	
	Together with Theorem~\ref{thm:Correction term} this determines the Euler numbers $e^{\text{vir}}( \Quot(F,u))$ (since any such pair $(v,w)$ is isometric to a pair $((1,0,1-n), (0,\beta,m))$.
	However, it would be useful to have a more direct way to prove Theorem~\ref{thm:higher rank quot},
	since this would give another way to relate higher rank DT theory of $S \times \E$ to rank $1$.
	The natural pathway to proving the theorem is to apply an auto-equivalence
	which identifies $\Quot(F,u)$ with a Quot scheme of a rank $1$ object in the derived category,
	where the quotients are taken in the heart of some Bridgeland stability conditions.
	The theorem would then boil down to showing that
	the virtual Euler number of the Quot scheme stays invariant under change of hearts (the invariance under changing $F$ is provided already by deformation equivalence).
	
	The paper \cite{K3xE} proposed 8 different conjectures related to counting in K3 geometries. They were labeled
	\begin{center}
		A, B, C1, C2, D, E, F, G. \end{center}
	This paper here in combination with \cite{N1,N2, HAE} tackles Conjectures A and B of \cite{K3xE}. (Strictly speaking, we obtain Conjecture B only for DT invariants.)
	The same results also immediately imply Conjecture G.
	Conjecture C1 was proven by T.~Buelles \cite{Buelles}.
	The most difficult of the conjectures appear to be Conjectures C2 (multiple cover formula for Gromov-Witten theory of K3 surfaces, divisibility 2 solved in \cite{BB}),
	and Conjecture D (GW/DT correspondence for imprimitive classes).
	The remaining conjectures concern the matrix of quantum multiplication with a divisor on $S^{[n]}$. Conjecture E here was partially resolved in the work \cite{vIOP},
	which provided an explicit candidate for the matrix.
	This makes Conjecture F now the most accessible candidate on the list.
	%Conjecture F seems to be 
	
	\subsection{Plan of the paper}
	In Section~\ref{section:degree} we discuss our conventions for degree and prove a few basic lemmas about it.
	In Section~\ref{sec:Quot scheme integrals} 
	we explain a basic universality result for descendent integrals over nested Hilbert schemes (based on work of Gholampour and Thomas).
	We then express the virtual numbers of the Quot schemes $\Quot(F,u)$ in rank $1$
	as tautological integrals over the Hilbert scheme and use a degeneration argument to show that their generating series is of a certain form.
	Section~\ref{section:cap} concern the Donaldson-Thomas theory of the cap $S \times \p^1 / S_{\infty}$ and is the heart of the paper.
	We analyze the obstruction theory on the extremal component of the fixed locus by proving both vanishing of the contribution of most components and relate the remaining terms to the Quot integrals.
	The multiple cover formulas are taken up in Section~\ref{sec:multiple cover}. % by degeneration arguments.
	%where after introducing the right notation the notational hazard becomes manageable. % here.
	In Section~\ref{sec:Proofs} we put everything together and prove the theorems announced above.

	\subsection{Acknowledgements}
	I would like to thank Thorsten Beckmann, Denis Nesterov, Rahul Pandharipande and Richard Thomas for discussions related to this project.
	The link between the Hilbert scheme $S^{[n]}$ and Donaldson-Thomas theory
	(e.g. in the form of Theorem~\ref{thm:Wall  cross rank 1}) 
	is only possible due to the work \cite{N1,N2} of Denis Nesterov.
	The proofs of the multiple cover formulas rely on the beautiful second-cosection argument which was found by Rahul Pandharipande and Richard Thomas in \cite{PT}.
	I also thank the referees for useful comments which improved and corrected the presentation, in particular in Section~\ref{subsec:quasimap degree}.
	
The author was funded by the Deutsche Forschungsgemeinschaft (DFG) - OB 512/1-1,
and the starting grant 'Correspondences in enumerative geometry: Hilbert schemes, K3 surfaces and modular forms', No 101041491
 of the European Research Council.

	\section{Definitions of degree} \label{section:degree}
	\subsection{Mukai vector on K3 surfaces}
	Let $S$ be a K3 surface and consider the lattice $\Lambda = H^{\ast}(S,\BZ)$ endowed with the Mukai pairing
	\[ (x \cdot y) := - \int_S x^{\vee} y, \]
	where, if we decompose an element $x \in \Lambda$ according to degree as $(\rho,D,n)$, we have written $x^{\vee} = (\rho,-D,n)$.
	We will also write
	\[ \rk(x) = \rho, \quad c_1(x) = D, \quad v_2(x) = n. \]
	Given a sheaf or complex $E$ on $S$ the Mukai vector of $E$ is defined by
	\[ v(E) = \sqrt{\td_S} \cdot \ch(E) \in \Lambda. \]
	The relationship to the Euler characteristic is $\chi(E,F) = - v(E) \cdot v(F)$.
	
	\subsection{Mukai vector on $S \times C$}
	Let $C$ be a smooth curve. We naturally decompose the even cohomology
	\[ H^{2\ast}(S \times C, \BZ) = H^{\ast}(S,\BZ) 1_C \oplus H^{\ast}(S,\BZ) \omega \]
	where $1_C, \omega \in H^{\ast}(C)$ is the unit and the class of a point respectively.
	We denote the Mukai vector of a sheaf $F$ on $S \times C$ by
	\[ \ch(F) \sqrt{\td_S} = v(F) + w(F) \omega = (v(F),w(F)). \]
	
\subsection{Quasimap degree} \label{subsec:quasimap degree}
%Let $\Coh(v)$ be the stack of coherent sheaves on $S$ with Mukai vector $v$ and let
%\[ \Coh_{\mathfrak{r}}(v) = \Coh(v) \thickslash \BG_m \]
%be its rigidification.
Let $M(v)$ be a proper moduli space of Gieseker-stable sheaves in Mukai vector $v$ and let $M(v) \subset \Coh_{\mathfrak{r}}(v)$ be the rigidified stack of coherent sheaves of Mukai vector $v$ in which it $M(v)$ is embedded as an open substack. We assume that we are given an algebraic class $y \in K_{\mathrm{alg}}(S)$ with $v \cdot v(y) = 1$.
The class $y$ defines canonically a universal family $\BG$ over $\Coh_{\mathfrak{r}}(v)$,
which has the following property\footnote{Let $\Coh(v)$ be the stack of coherent sheaves on $S$ and consider the $\BG_m$-gerbe $\Coh(v) \to \Coh_{\mathfrak{r}}(v)$.
Let $\BF$ be the universal sheaf on $\Coh(v) \times S$ (which always exists and is canonical) and for $u \in K_{\mathrm{alg}}(S)$ let $\lambda_{\BF}(u) = \det \pi_{\ast}( \BF \otimes \pi_S^{\ast}(u) ) \in \Pic(\Coh(S))$. Then $\BF \otimes \lambda_{\BF}(-y^{\vee})^{-1}$ has $\BG_m$-weight zero and hence descends as the universal sheaf $\BG$ to $\Coh_{\mathfrak{r}}(v) \times S$.
% along the $\BG_m$-gerbe $\Coh(v) \to \Coh_{\mathfrak{r}}(v)$.

There is a subtle point: The construction of the line bundles $\lambda_{\BF}(u)$ and the universal sheaves $\BG$ require a resolution of $\pi_{\ast}(\BF \otimes \pi_S^{\ast}(u))$ which exists a priori only over finite type subschemes. Hence $\BG$ can be defined only over finite type open substacks of $\Coh_{\mathfrak{r}}(v)$, therefore globally only in an inductive way. Since the quasimaps we consider have fixed degree, they can be shown to map to a sufficiently large finite type substack of $\Coh_{\mathfrak{r}}(v)$ \cite{N1}. Hence for our purposes we may assume that $\BG$ and $\lambda(u)$ are globally defined. We refer to \cite{N1} for a discussion on this point and the exact conventions that we follow.}: If we define the morphism
\[ \lambda : K_{\mathrm{alg}}(S) \to \Pic(\Coh_{\mathfrak{r}}(v)), \quad \lambda(u) = \det \pi_{\ast}( \BG \otimes \pi_S^{\ast}(u) ) \]
where $\pi, \pi_S$ are the projections of $\Coh_{\mathfrak{r}}(v) \times S$ onto the factors, then
\begin{equation} \lambda(-y^{\vee}) = \CO. \label{3sdf} \end{equation}

From a hyperk\"ahler point of view the most natural way to construct cohomology classes on the stack $\Coh_{\mathfrak{r}}(v)$ is given by the Mukai morphism
\[ \theta : v^{\perp} \to H^2( \Coh_{\mathfrak{r}}(v)), \quad x \mapsto \left[ \pi_{\ast}\left( \ch(\BG) \sqrt{\td_S} \cdot x^{\vee} \right) \right]_{1}, \]
	where $[ - ]_{k}$ stands for taking the complex degree $k$ component of a cohomology class,
	i.e. the component in $H^{2k}$.
	By restricting the image to $M(v)$ we obtain an isomorphism of lattices:
	\[ \theta : v^{\perp} \xrightarrow{\cong} H^2(M(v), \BZ), \]
	where the right hand side carries the Beauville-Bogomolov-Fujiki form, see \cite[Sec.6.2]{HL}.
	
	We define the degree of a map $f : C \to \Coh_{\mathfrak{r}}(v)$ to be the morphism
	\[ \deg(f) : v^{\perp} \to \BZ \]
	given by $\deg(f)(y) = \int_C f^{\ast}( \theta(y) )$.
	\begin{lemma} \label{lemma:quasi degree}
		Let $f : C \to \Coh_{\mathfrak{r}}(v)$ be a quasimap (see \cite[Sec.3.1]{N1})
		and let $F$ be the associated sheaf on $S \times C$
		determined by the pullback of the universal family $\BG$.

Then $v(F) = v$ and the class $w(F)$ is determined by
\begin{equation} \label{deg eqn} \deg(f) = - \langle w(F), - \rangle \ \in \Hom( v^{\perp}, \BZ), \end{equation}
where $\langle x, - \rangle$ is the operator of pairing with $x$ in the Mukai lattice, and
\[ w(F) \cdot v(y) = 0. \]
	\end{lemma}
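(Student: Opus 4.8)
The plan is to reduce all three assertions to cohomology computations on $C\times S$ by means of the Cartesian square
\[
\begin{tikzcd}
C\times S \arrow[r,"g"] \arrow[d,"p"'] & \Coh_{\mathfrak{r}}(v)\times S \arrow[d,"\pi"] \\
C \arrow[r,"f"'] & \Coh_{\mathfrak{r}}(v)
\end{tikzcd}
\]
where $g=f\times\id_S$, the vertical arrows are the projections to the first factor, $p_S\colon C\times S\to S$ is the projection to $S$, and $F=g^\ast\BG$, so that $\ch(F)\sqrt{\td_S}=g^\ast(\ch(\BG)\sqrt{\td_S})$. Since $f$ has fixed degree it factors through a finite type open substack on which $\BG$, and hence all the line bundles $\lambda(u)$, are defined, so we may work with $\BG$ directly (cf. the footnote to \eqref{3sdf}); and since $\BG$ is flat over $\Coh_{\mathfrak{r}}(v)$, the sheaf $F$ is flat over $C$, with $F|_{S\times\{c\}}$ the sheaf parametrized by $f(c)$ for every $c\in C$. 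Restricting $\ch(F)\sqrt{\td_S}=v(F)+w(F)\omega$ along a fibre $S=S\times\{c\}\hookrightarrow C\times S$ kills $\omega$ and leaves $v(F)$, which is therefore the Mukai vector of $F|_{S\times\{c\}}$, i.e. $v(F)=v$.

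\emph{The degree formula \eqref{deg eqn}.} Fix $x\in v^\perp$. The Mukai morphism $\theta$ is built from $\ch(\BG)\sqrt{\td_S}$ by a cohomological pushforward along $\pi$ — not from the Chern character of a pushforward sheaf — so flat base change $f^\ast\pi_\ast=p_\ast g^\ast$ gives at once
\[
f^\ast\theta(x)=\bigl[\,p_\ast\bigl(\ch(F)\sqrt{\td_S}\cdot p_S^\ast(x^\vee)\bigr)\bigr]_1 .
\]
Integrating over $C$ and substituting $\ch(F)\sqrt{\td_S}=v(F)+w(F)\omega$, only the $\omega$–component survives, since $v(F),w(F),x^\vee$ are pulled back from $S$ and $\int_C\omega=1$:
\[
\deg(f)(x)=\int_{C\times S}\ch(F)\sqrt{\td_S}\cdot p_S^\ast(x^\vee)=\int_S w(F)\,x^\vee .
\]
Using the identity $\int_S a\,b^\vee=\int_S a^\vee b$ on $H^\ast(S,\BZ)$ together with the definition of the Mukai pairing, $\int_S w(F)\,x^\vee=-(w(F)\cdot x)=-\langle w(F),x\rangle$, which is \eqref{deg eqn}.

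\emph{The identity $w(F)\cdot v(y)=0$.} This is where the normalization \eqref{3sdf} is used. By compatibility of the determinant of cohomology with base change, $f^\ast\lambda(u)\cong\det p_\ast(F\otimes p_S^\ast u)$ for all $u\in K_{\mathrm{alg}}(S)$; taking $u=-y^\vee$ and invoking \eqref{3sdf}, the left side is $\CO_C$, so $\deg\det p_\ast(F\otimes p_S^\ast(-y^\vee))=0$. On the other hand Grothendieck–Riemann–Roch for $p$, whose relative tangent bundle is $p_S^\ast T_S$ with Todd class $p_S^\ast\td_S$, computes this degree as
\[
\int_{C\times S}\ch(F)\cdot p_S^\ast\bigl(\ch(-y^\vee)\,\td_S\bigr) .
\]
Since $\ch(-y^\vee)\td_S=-\,v(y)^\vee\sqrt{\td_S}$ (because $v(y^\vee)=v(y)^\vee$ on a K3 surface) and $\ch(F)\sqrt{\td_S}=v(F)+w(F)\omega$, the integrand equals $-(v(F)+w(F)\omega)\cdot p_S^\ast(v(y)^\vee)$, and integrating over $C\times S$ as before gives $-\int_S w(F)\,v(y)^\vee=(w(F)\cdot v(y))$. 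Comparing with $0$ yields $w(F)\cdot v(y)=0$.

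The argument is in the end a bookkeeping exercise; the places that require care are the sign and duality bookkeeping with $(-)^\vee$ and the Mukai pairing in the last two steps, and — more structurally — the two inputs that the determinant line bundle $\lambda$ behaves well under base change along $f$ and that restricting to a finite type substack makes $\BG$ (hence also $\lambda$ and $\theta$) genuinely available. Both of the latter are provided by Nesterov's construction \cite{N1}; everything else is a direct unwinding of the definitions in this section.
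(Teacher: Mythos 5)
Your proof is correct and follows essentially the same route as the paper: restrict to a fiber to get $v(F)=v$, unwind the definition of $\theta$ and $\deg(f)$ against the decomposition $\ch(F)\sqrt{\td_S}=v(F)+w(F)\omega$ to get \eqref{deg eqn}, and combine the normalization \eqref{3sdf} with Grothendieck--Riemann--Roch for the projection to $C$ to get $w(F)\cdot v(y)=0$; your extra care with base change of $\pi_\ast$ and of $\lambda$ just makes explicit what the paper leaves implicit.
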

	\begin{proof}
By restricting $F$ to a fiber over $C$, we find $v(F) = v$.
Let $x \in v^{\perp}$. Then
		\begin{align*}
			\deg(f)(x) & = \int_{S \times C} \ch(F) \pi_S^{\ast}( x^{\vee} ) \sqrt{\td_S} \\
			&  = \int_{S \times C} (v(F) + w(F) \omega) \pi_S^{\ast}( x^{\vee} ) \\
			& = \int_S w(F) x^{\vee} \\
			& = - (w(F) \cdot x).
		\end{align*}
This shows \eqref{deg eqn}.
Similarly, by \eqref{3sdf} we have
\[ 0 = \int_{C} c_1( f^{\ast} \lambda(-y^{\vee}) )
=
\int_{S \times C} \ch( F \otimes \pi_S^{\ast}( -y^{\vee} )) \td_S 
=
\int_S w(F) \cdot v(-y^{\vee}) = w(F) \cdot v(y). \]
Since $v(y) \cdot v=1$, we see that $w(F)$ is determined by
$\deg(f)$ and $w(F) \cdot v(y)=0$.
	\end{proof}

\begin{rmk}
The divisibility $\div(\alpha)$ of a vector $\alpha \in (v^{\perp})^{\vee}$
is the largest positive integer $k$ such that $\alpha/k \in (v^{\perp})^{\vee}$.
Since $H^{\ast}(S,\BZ) \cong v^{\perp} \oplus \BZ v(y)$ we have
$\div(\deg(f)) = \div(w(F))$.
%One can also read off the divisibility of $\deg(f)$ from the pair $(v,w)$ by the formula
%\[ \div( \deg(f) ) = \div( v \wedge w) = \max\{  \div(w - kv) \,|\, k \in \BZ  \}. \]
%Since we do not need this later on, we omit the proof.
\end{rmk}

	\section{Quot scheme integrals} \label{sec:Quot scheme integrals}
	\subsection{Perfect obstruction theory} 
	\label{subsec:quot pot}
	Let $S$ be a smooth projective surface and let $F \in \Coh(S)$
	be a coherent sheaf which is of positive rank and Gieseker stable with respect to some ample class.
	Consider the Quot scheme $\Quot(F)$ and 
	let $\pi, \pi_S$ be the projections of $\Quot(F) \times S$ to the factors.
	We denote the universal quotient sequence on $\Quot(F) \times S$ by
	\[ 0 \to \CK \to \pi_S^{\ast}(F) \to \CQ \to 0. \]
	For sheaves (or complexes, or $K$-theory classes) $\CF_1, \CF_2$ on $\Quot(F) \times S$ we write
	\[ R\Hom_S(\CF_1, \CF_2) = R \pi_{\ast} R\hom( \CF_1, \CF_2 ). \]
	
	\begin{lemma} \label{lemma:canonical pot}
		Assume that $H^0(\CO(-K_S)) \neq 0$.
		Then the Quot scheme $\Quot(F)$ admits a (canonical) perfect obstruction theory
		with virtual tangent bundle $T^{\std} = R \Hom_S( \CK, \CQ )$.
	\end{lemma}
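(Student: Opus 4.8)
The plan is to take the standard deformation--obstruction theory of the Quot scheme and check that the anticanonical hypothesis forces it to have the right amplitude. At a point $[F \twoheadrightarrow Q] \in \Quot(F)$ with kernel $K$, the Zariski tangent space is $\Hom_S(K,Q)$ and obstructions lie in $\Ext^1_S(K,Q)$; globally, the (truncated) Atiyah class of the universal quotient sequence $0 \to \CK \to \pi_S^{\ast}F \to \CQ \to 0$ produces a morphism
\[ \phi \colon E^{\bullet} := \big( R\Hom_S(\CK,\CQ) \big)^{\vee} \longrightarrow L_{\Quot(F)} \]
to the cotangent complex which is an obstruction theory in the sense of Behrend--Fantechi (the $h^0$-isomorphism and $h^{-1}$-epimorphism conditions being the classical computation of the deformation theory of quotients). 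This is the standard relative Quot obstruction theory; since $\CQ$ --- hence also $\CK$, by the universal sequence --- is flat over $\Quot(F)$ and $S$ is smooth projective, the complex $R\Hom_S(\CK,\CQ)$ is perfect. So the only thing I would need to prove is that $E^{\bullet}$ has amplitude $[-1,0]$, equivalently that $T^{\std} := R\Hom_S(\CK,\CQ)$ has tor-amplitude $[0,1]$.

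Tor-amplitude of a perfect complex can be checked on fibres, and by cohomology and base change the fibre of $R\Hom_S(\CK,\CQ)$ over $[F \twoheadrightarrow Q]$ is $R\Hom_S(K,Q)$, with cohomology the groups $\Ext^i_S(K,Q)$. So it suffices to show $\Ext^i_S(K,Q) = 0$ for all $i \geq 2$ and every quotient. For $i > 2$ this is automatic since $S$ is a smooth projective surface, and for $i = 2$ Serre duality gives $\Ext^2_S(K,Q) \cong \Hom_S(Q, K \otimes \omega_S)^{\vee}$, so I am reduced to proving $\Hom_S(Q, K \otimes \omega_S) = 0$.

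This is where the two hypotheses enter. A nonzero section of $\CO(-K_S)$ is a nonzero map $\omega_S \to \CO_S$, hence (on the integral surface $S$) an injection; since $F$ is Gieseker stable of positive rank it is torsion-free, so its subsheaf $K$ is torsion-free, and tensoring the injection by $K$ stays injective, giving $K \otimes \omega_S \hookrightarrow K$. Applying the left-exact functor $\Hom_S(Q,-)$ then reduces the claim to $\Hom_S(Q,K) = 0$. For this I would use that $F$, being stable, is simple: given $\varphi \in \Hom_S(Q,K)$, the composition $\psi := (K \hookrightarrow F) \circ \varphi \circ (F \twoheadrightarrow Q)$ lies in $\Hom_S(F,F)$ and is therefore a scalar multiple of $\id_F$. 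If that scalar is nonzero then $\psi$ is surjective, so its image, which is contained in $K$, equals $F$; thus $K = F$, i.e. $Q = 0$ and $\varphi = 0$. If the scalar is zero, then $\varphi \circ (F \twoheadrightarrow Q) = 0$ because $K \hookrightarrow F$ is injective, and hence $\varphi = 0$ because $F \twoheadrightarrow Q$ is surjective. Either way $\varphi = 0$, so $\Hom_S(Q,K) = 0$ and the amplitude claim follows.

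The substantive content is the amplitude: a priori $R\Hom_S(\CK,\CQ)$ has cohomology in degrees $0$, $1$ and $2$, and the anticanonical section is precisely what kills the degree-$2$ part $\Ext^2_S(K,Q)$ --- on a surface with no effective anticanonical divisor one should not expect a perfect obstruction theory concentrated in $[-1,0]$. I expect the genuinely technical bookkeeping to be not the $\Ext^2$-vanishing above but the construction of $\phi$ and the perfectness of $R\Hom_S(\CK,\CQ)$, both of which are by now standard.
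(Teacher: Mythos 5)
Your proof is correct and follows essentially the same route as the paper: both reduce the existence of a perfect obstruction theory with virtual tangent $R\Hom_S(\CK,\CQ)$ to killing the degree-two obstruction group via Serre duality, an effective anticanonical divisor, and stability of $F$, with the construction of the obstruction theory itself cited as standard. The only cosmetic difference is that the paper proves $\Hom(Q,F)=0$ by stability, twists by $D\in|-K_S|$ to get $\Ext^2(F,Q)=0$, and passes to $\Ext^2(K,Q)$ through the long exact sequence, whereas you kill $\Ext^2(K,Q)$ directly via the injection $K\otimes\omega_S\hookrightarrow K$ and simplicity of $F$.
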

	\begin{proof}(Sketch)
		Consider a short exact sequence $0 \to K \to F \to Q \to 0$ defining a point in $\Quot(F)$, where $Q \neq F$,
		and apply $\Hom( -, Q)$. This gives
		\begin{equation} \Ext^1(K,Q) \to \Ext^2(Q,Q) \to \Ext^2(F,Q) \to \Ext^2(K, Q) \to 0. \label{xxy} \end{equation}
		Given a morphism $s : Q \to F$ we have a surjection
		$F \twoheadrightarrow Q \twoheadrightarrow \mathrm{Im}(s) \subset F$,
		which by stability of $F$ shows that $\mathrm{Im}(s) = 0$, so $s=0$.
		%By stability the sloper of $\mathrm{Im}(s)$ must be strictly
		%Since we have a surjection $F \to Q$ and $F$ is stable, the smallest slope appearing in the Harder-Narasimhan filtration of $Q$ must be strictly larger than the slope of $F$.
		Hence $\Hom(Q,F) = 0$. By our assumption on $S$ there exists an effective divisor $D \in |-K_S|$.
		Applying $\Hom(- , F)$ to $0 \to Q \to Q(D) \to Q|_{D} \to 0$ implies then that $\Hom(Q(D), F) = 0$.
		We conclude that $\Ext^2(F,Q) = \Hom(Q, F \otimes K_S)^{\vee} = \Hom(Q(D), F)^{\vee}$ vanishes.
		The existence then follows by standard methods, e.g. \cite{MO, OP}.
	\end{proof}
	\begin{rmk}
		The same argument works for any surface $S$ if $F$ is the ideal sheaf of a zero-dimensional subscheme.
	\end{rmk}

	\subsection{K3 surfaces} \label{subsec:quot k3}
	From now on let $S$ be a K3 surface. Let $\Quot(F,u)$ be the Quot scheme
	parametrizing quotients $F \to Q$ with Mukai vector $v(Q) = u$.
	We always assume that $u$ is chosen such that the Quot scheme is non-empty
	and that $u \notin \{ v(F), 0 \}$.
	
	\begin{lemma}
		The canonical perfect obstruction theory on $\Quot(F,u)$ admits a surjective cosection $h^1(T^{\std}) \to \CO$.
		
	\end{lemma}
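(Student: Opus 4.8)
The plan is to build the cosection by composing a connecting homomorphism with the trace map, exactly as in the construction of the reduced obstruction theory for moduli of sheaves on a K3 surface. Keep the notation of the previous subsection: $\pi, \pi_S$ are the projections of $\Quot(F,u)\times S$ and $0 \to \CK \to \pi_S^{\ast} F \to \CQ \to 0$ is the universal sequence. Applying $R\hom(-,\CQ)$ and then $R\pi_{\ast}$ to this sequence yields an exact triangle
\[ R\Hom_S(\CQ,\CQ) \to R\Hom_S(\pi_S^{\ast}F, \CQ) \to R\Hom_S(\CK,\CQ)=T^{\std} \xrightarrow{\ \delta\ } R\Hom_S(\CQ,\CQ)[1], \]
and hence, on cohomology sheaves in degrees $1$ and $2$, an exact sequence
\[ h^1\bigl(R\Hom_S(\pi_S^{\ast} F, \CQ)\bigr) \to h^1(T^{\std}) \xrightarrow{\ \delta\ } h^2\bigl(R\Hom_S(\CQ,\CQ)\bigr) \to h^2\bigl(R\Hom_S(\pi_S^{\ast} F, \CQ)\bigr), \]
using $h^1(T^{\std}) = h^1(R\Hom_S(\CK,\CQ))$.

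First I would show that $\delta$ is surjective, i.e. that $h^2(R\Hom_S(\pi_S^{\ast}F, \CQ)) = 0$. The complex $R\Hom_S(\pi_S^{\ast}F,\CQ)$ is perfect (resolve $F$ by a bounded complex of vector bundles on $S$; then $R\Hom_S(\pi_S^{\ast}F,\CQ)$ is the pushforward along the smooth projective $\pi$ of a bounded complex of sheaves flat over $\Quot(F,u)$), and its derived restriction to the point $[0 \to K \to F \to Q \to 0]$ is $R\Hom_S(F,Q)$. By Serre duality on the K3 surface $\Ext^2_S(F,Q) \cong \Hom_S(Q,F)^{\vee}$, and $\Hom_S(Q,F)=0$: the image of any $s:Q\to F$ would be simultaneously a quotient and a subsheaf of the stable sheaf $F$, which — using that $F$ is torsion free and $u \notin \{0, v(F)\}$ — forces $\operatorname{Im}(s)=0$. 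This is exactly the argument already used in the proof of Lemma~\ref{lemma:canonical pot}. So $R\Hom_S(F,Q)$ has cohomology concentrated in degrees $[0,1]$ for every point, and a descending induction on the top of the amplitude (applying Nakayama to the top cohomology sheaf, which commutes with base change) gives $h^j(R\Hom_S(\pi_S^{\ast}F,\CQ))=0$ for all $j\geq 2$; in particular $\delta$ is surjective.

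Next I would use the trace map $\operatorname{tr}: R\Hom_S(\CQ,\CQ) \to R\pi_{\ast}\CO_{\Quot(F,u)\times S}$. Since $S$ is a K3 surface, $H^0(\CO_S)\cong\BC\cong H^2(\CO_S)$ and $H^1(\CO_S)=0$, so $R^2\pi_{\ast}\CO_{\Quot(F,u)\times S}$ is canonically $\CO_{\Quot(F,u)}$, and on $h^2$ the trace becomes a morphism $\operatorname{tr}: h^2(R\Hom_S(\CQ,\CQ)) \to \CO_{\Quot(F,u)}$. Over a point this is the trace $\Ext^2_S(Q,Q)\to H^2(S,\CO_S)$, which under Serre duality is dual to the map $H^0(S,\CO_S)\to\Hom_S(Q,Q)$, $1\mapsto\id_Q$; since $Q\neq 0$ the latter is injective, so the trace is fibrewise surjective, hence surjective by Nakayama's lemma. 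The composition $\operatorname{tr}\circ\delta: h^1(T^{\std}) \to \CO_{\Quot(F,u)}$ is therefore the desired surjective cosection.

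The one genuinely delicate point is the vanishing $h^2(R\Hom_S(\pi_S^{\ast}F,\CQ))=0$, which is where the stability of $F$ and the hypothesis $u\notin\{0,v(F)\}$ are used, and where some care with base change is required because $F$ need not be locally free; everything else is the formal "connecting map, then trace" recipe. I would also remark in passing that the same computation (now via $\Hom_S(Q,K)\subseteq\Hom_S(Q,F)=0$) shows $h^{\geq 2}(T^{\std})=0$, so $T^{\std}$ genuinely has amplitude $[0,1]$, and that the cosection constructed here is expected to agree up to a scalar with the one induced by the holomorphic symplectic form of $S$ — but for the statement only the existence of some surjective cosection is needed.
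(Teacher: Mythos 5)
Your proposal is correct and follows essentially the same route as the paper: compose the connecting map $\Ext^1(K,Q)\to\Ext^2(Q,Q)$ from the sequence \eqref{xxy} with the trace map, using $\Ext^2(F,Q)\cong\Hom(Q,F)^{\vee}=0$ (stability of $F$) for surjectivity of the connecting map and Serre duality against $H^0(\CO_S)\hookrightarrow\Hom(Q,Q)$ for surjectivity of the trace. The only difference is presentational: you carry out the argument in families with base-change bookkeeping, while the paper states it fibrewise.
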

	\begin{proof}
		We compose the first map in \eqref{xxy} with the trace map
		\[ H^1(R\Hom(K,Q)) = \Ext^1(K,Q) \to \Ext^2(Q,Q) \xrightarrow{tr} H^2(S, \CO_S) = \CO. \]
		Since the trace map is Serre dual to the inclusion $H^0(S, \CO_S) \hookrightarrow \Hom(Q,Q)$,
		it is surjective.
	\end{proof}
	
	Hence the standard virtual class $[\Quot(F,u)]^{\std}$ which is of dimension
	$\chi(K,Q) = u \cdot (u-v)$ vanishes.
	Using co-section localization by Kiem-Li \cite{KL} we obtain a reduced virtual cycle:
	\[ [ \Quot(F,u) ]^{\vir} \in A_{\vd}(\Quot(F,u)). \]
	It is associated to the (reduced) virtual tangent bundle
	$T^{\vir}_{\Quot(F,u)} =  R \Hom_S( \CK, \CQ ) + \CO$,
	and hence of dimension $\vd = \rk(T^{\vir}_{\Quot(F,u)}) = u \cdot (u-v) + 1$.
	We write
	\[ e^{\vir}( \Quot(F,u) ) =
	\int_{ [ \Quot(F,u) ]^{\vir} } c_{\vd}( T^{\vir}_{\Quot(F,u)} ) \]
	for the virtual Euler characteristic of the moduli space in the sense of Fantechi--G\"ottsche \cite{FG}.

	\subsection{Nested Hilbert schemes}
	Let $\beta \in \mathrm{NS}(S)$ be an effective curve class
	and let $n_1, n_2 \geq 0$ be integers.
	Consider the nested Hilbert scheme, where we follow the notation of \cite{GT2},
	\[ S_{\beta}^{[n_1, n_2]}
	=
	\{ I_1(-D) \subseteq I_2 \subset \CO_S : [D] = \beta,
	\mathrm{length}( \CO_S / I_i) = n_i. \}. \]
	There exists a natural embedding
	\begin{equation} j : S_{\beta}^{[n_1, n_2]} \hookrightarrow S^{[n_1]} \times S^{[n_2]} \times \p \label{inclusion} \end{equation}
	where the linear system in class $\beta$ is denoted by
	\[ \p = \p(H^0(\CO(\beta))) \]
	Let $\CI_i \subset \CO$
	be the ideal sheaf of the universal subscheme $\CZ_i \subset S^{[n_i]} \times S$.

	\begin{thm}[{\cite{GT2}}] \label{thm:nested hilb}
		There exists a (reduced) perfect obstruction theory on $S_{\beta}^{[n_1, n_2]}$
		with virtual tangent bundle
		\[ 
		T_{S_{\beta}^{[n_1, n_2]}}^{\vir}
		=
		- R \Hom_{S}(\CI_1, \CI_1)_0 - R \Hom_S( \CI_2, \CI_2 )_0
		+ R \Hom_{S}( \CI_1, \CI_2 \otimes \CO(\beta)) \otimes \CO_{\p}(1) - \CO \]
		where $R \Hom_S( \CI_i, \CI_i)_0 = \Cone( R \Gamma(S, \CO_S) \to R \Hom_S( \CI_i, \CI_i ))$.
		
		The associated virtual cycle
		$[ S_{\beta}^{[n_1, n_2]} ]^{\text{vir}}$
		is of dimension $n_1 + n_2 + \beta^2/2 + 1$ and satisfies
		\begin{multline*}
			j_{\ast} [ S_{\beta}^{[n_1, n_2]} ]^{\text{vir}} \\
			=
			c_{n_1 + n_2}\Big( R\Gamma(\CO(\beta)) \otimes \CO_{\p}(1) - R \Hom_S( \CI_1, \CI_2 \otimes \CO(\beta) ) \otimes \CO_{\p}(1) \Big) \cdot c_1(\CO_{\p^1}(1))^{h^1(\CO(\beta))}
		\end{multline*}
	\end{thm}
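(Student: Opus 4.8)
I would follow the general construction of virtual cycles on nested Hilbert schemes of surfaces (Gholampour--Sheshmani--Yau, and Gholampour--Thomas \cite{GT2}), specialised to the K3 surface $S$ and refined by the reduction coming from the holomorphic symplectic form.

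\textbf{The reduced obstruction theory.} A point of $S_\beta^{[n_1,n_2]}$ is a flag of sheaves $\CI_1(-D)\hookrightarrow\CI_2\hookrightarrow\CO_S$ with $D$ an effective divisor of the linear system $\p=\p(H^0(\CO(\beta)))$, and I would control its deformation theory through the derived Hom-complexes of the three constituents together with the scaling data of $D$. The deformations and obstructions of $\CI_1$ and of $\CI_2$ separately contribute $R\Hom_S(\CI_i,\CI_i)_0$; the deformations of the inclusion $\CI_1(-D)\hookrightarrow\CI_2$, packaged with the rescaling of the section of $\CO(\beta)$ that cuts out $D$ (this rescaling introduces the $\CO_{\p}(1)$-twist), are governed by $R\Hom_S(\CI_1,\CI_2\otimes\CO(\beta))\otimes\CO_{\p}(1)$; and the overall scalar automorphism of the flag removes a copy of $\CO$. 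Assembling these into a cone as in the general construction yields a perfect obstruction theory with the stated virtual tangent complex. On a K3 surface $\Hom_S(\CI_i,\CI_i)_0=0$ and, by Serre duality, $\Ext^2_S(\CI_i,\CI_i)_0\cong\Hom_S(\CI_i,\CI_i)_0^{\vee}=0$, so $-R\Hom_S(\CI_i,\CI_i)_0=T_{S^{[n_i]}}$ is an honest bundle; the construction carried out for a general surface carries one further obstruction direction $H^2(S,\CO_S)\cong\BC$, which here produces a surjective cosection (composition with the trace map, exactly in the manner of the cosection in Section~\ref{subsec:quot k3}), and I would remove it by Kiem--Li cosection localisation \cite{KL}. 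This is the meaning of ``reduced'' in the statement, and it is also the reason the curve class is recorded through the full linear system $\p$ rather than a smaller ``expected'' space. Riemann--Roch on $S$ (via $\chi(E,F)=-v(E)\cdot v(F)$) then gives
\[ \rk(T^{\vir}) \;=\; -\chi(\CI_1,\CI_1)_0-\chi(\CI_2,\CI_2)_0+\chi(\CI_1,\CI_2\otimes\CO(\beta))-1 \;=\; n_1+n_2+\beta^2/2+1, \]
the claimed virtual dimension.

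\textbf{The pushforward formula.} I would realise the reduced cycle on the smooth ambient space $Y:=S^{[n_1]}\times S^{[n_2]}\times\p$ along the closed embedding $j$ of \eqref{inclusion}. Following \cite{GT2}, $S_\beta^{[n_1,n_2]}$ is the zero locus in $Y$ of a canonical section of a vector bundle $E$ whose $K$-theory class is
\[ [E]\;=\;\bigl(R\Gamma(\CO(\beta))-R\Hom_S(\CI_1,\CI_2\otimes\CO(\beta))\bigr)\otimes\CO_{\p}(1)\;+\;H^1(S,\CO(\beta))\otimes\CO_{\p}(1), \]
and the standard obstruction theory $T_Y|_{S_\beta^{[n_1,n_2]}}-E$ of this zero locus matches the one above. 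This comparison uses the Euler sequence of $\p$, the identity $T_{S^{[n_i]}}=-R\Hom_S(\CI_i,\CI_i)_0$, and the fact that $R\Gamma(\CO(\beta))-R\Hom_S(\CI_1,\CI_2\otimes\CO(\beta))$ is represented by an honest bundle of rank $n_1+n_2$ --- which one sees by resolving $R\Hom_S(\CI_1,\CI_2\otimes\CO(\beta))$ through the triangles obtained by applying $R\Hom_S(-,\CI_2\otimes\CO(\beta))$ and $R\Gamma(-\otimes\CO(\beta))$ to the universal sequences $0\to\CI_i\to\CO\to\CO_{\CZ_i}\to0$. Since the virtual cycle of the zero locus of a section of a vector bundle over a smooth variety pushes forward to the top Chern class of that bundle (the localised Euler class), and since $H^1(S,\CO(\beta))\otimes\CO_{\p}(1)\cong\CO_{\p}(1)^{\oplus h^1(\CO(\beta))}$ splits off $E$, the Whitney formula gives
\begin{multline*}
	j_{\ast}[S_\beta^{[n_1,n_2]}]^{\vir} \\
	=\; c_{n_1+n_2}\!\Bigl(R\Gamma(\CO(\beta))\otimes\CO_{\p}(1)-R\Hom_S(\CI_1,\CI_2\otimes\CO(\beta))\otimes\CO_{\p}(1)\Bigr)\cdot c_1(\CO_{\p}(1))^{h^1(\CO(\beta))},
\end{multline*}
which is the asserted formula.

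\textbf{The main obstacle.} The genuinely hard point is the realisation above: that $S_\beta^{[n_1,n_2]}$ is the \emph{scheme-theoretic} zero locus of a section of the honest rank-$(n_1+n_2+h^1(\CO(\beta)))$ bundle $E$, with matching obstruction theory. The naive candidate section --- the composition $\CI_1(-D)\to\CO\to\CO_{\CZ_2}$ carried out in families --- lands only in a non-locally-free sheaf of generic rank $n_2$, which has the wrong rank; moreover $S_\beta^{[n_1,n_2]}$ is in general genuinely \emph{not} a transverse complete intersection in $Y$ (already for $n_1=n_2=0$ it is all of $\p$, of dimension exceeding the virtual dimension whenever $h^1(\CO(\beta))\neq0$). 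Getting around this --- by a reduction to the diagonal of $S\times S$, or to model surfaces, in the manner of \cite{GT2} --- and tracking the resulting excess into the localised Euler class is the step I would expect to consume most of the effort.
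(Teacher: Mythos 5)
Your first half is fine and coincides with the paper's route: the reduced obstruction theory, the identification $-R\Hom_S(\CI_i,\CI_i)_0 = T_{S^{[n_i]}}$, and the rank computation are exactly what Theorem 4.16 of \cite{GT2} supplies, and both you and the paper defer that construction to \cite{GT2}. The problem is the second half, i.e.\ the pushforward formula when $h^1(\CO(\beta)) \neq 0$. You assert that $S_{\beta}^{[n_1,n_2]}$ is the scheme-theoretic zero locus of a section of an honest bundle $E$ on $S^{[n_1]}\times S^{[n_2]}\times \p$ whose $K$-class already contains the summand $H^1(\CO(\beta))\otimes\CO_{\p}(1)$, and you then read off the factor $c_1(\CO_{\p}(1))^{h^1(\CO(\beta))}$ from the Whitney formula. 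That realization is not what \cite{GT2} provides: their construction goes through degeneracy loci of maps of bundles (after twisting by a sufficiently ample divisor), not through a single section of a bundle with the displayed $K$-class, and your parenthetical claim that $R\Gamma(\CO(\beta))-R\Hom_S(\CI_1,\CI_2\otimes\CO(\beta))$ is represented by an honest rank-$(n_1+n_2)$ bundle is not justified by resolving along the universal sequences (those resolutions produce contributions such as $R\Hom_S(\CO_{\CZ_1},\CO(\beta))$ concentrated in degree $2$ and $\Hom$-sheaves that jump where $\CZ_1$ meets $\CZ_2$). You flag this as the ``main obstacle,'' but the statement you would need is strictly stronger than, and different in form from, what \cite{GT2} proves, so the proposal does not close the gap by citation either.

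More importantly, by building $H^1(\CO(\beta))\otimes\CO_{\p}(1)$ into the assumed bundle you have assumed exactly the one piece that the paper proves itself. What Corollary 4.22 of \cite{GT2} actually gives is \eqref{3sdfis111}: the Chern class $c_{n_1+n_2}\bigl(R\Gamma(\CO(\beta))\otimes\CO_{\p}(1)-R\Hom_S(\CI_1,\CI_2\otimes\CO(\beta))\otimes\CO_{\p}(1)\bigr)$ capped against the Seiberg--Witten virtual class $[S_{\beta}]^{\vir}$ of the linear system. The remaining content of the proof is the evaluation $[S_{\beta}]^{\vir}=e\bigl(H^1(\CO(\beta))\otimes\CO_{\p}(1)\bigr)\cap[\p]$, which the paper establishes (following \cite[Sec.2]{KT2}) by choosing an auxiliary ample $A$ with $H^{\geq 1}(\CO(\beta+A))=0$, embedding $S_{\beta}\hookrightarrow S_{\beta+A}$ as the zero locus of a section of $H^0(\CO(\beta+A)|_A)\otimes\CO(1)$, and chasing the exact sequence $0\to H^0(\CO(\beta))\to H^0(\CO(\beta+A))\to H^0(\CO(\beta+A)|_A)\to H^1(\CO(\beta))\to 0$. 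Nothing in your sketch pins down that the excess $H^1$-direction enters with the twist $\CO_{\p}(1)$ rather than trivially (or with some other line bundle), and that twist is precisely what produces the factor $c_1(\CO_{\p}(1))^{h^1(\CO(\beta))}$ in the statement; this is the concrete missing step.
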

	\begin{proof}
		The first claim follows directly from Theorem 4.16 in \cite{GT2},
		which also shows the second claim whenever $H^1(\CO(\beta)) = 0$ (we can take $A=0$). In the general case, where $H^1(\CO(\beta))$ may be non-zero\footnote{A basic example is an elliptic K3 surface $S \to \p^1$ and $\beta= m f$ where $f$ is the fiber class. The linear system $|\CO(\beta)|$ is of dimension $m$, while $\chi(\CO(\beta)) = 1$.}, 
		we apply Corollary 4.22 of \cite{GT2} which gives:
		\begin{equation} \label{3sdfis111}
			j_{\ast} [ S_{\beta}^{[n_1, n_2]} ]^{\text{vir}}
			=
			c_{n_1 + n_2}( R\Gamma(\CO(\beta)) \otimes \CO_{\p}(1) - R \Hom_S( \CI_1, \CI_2 \otimes \CO(\beta) ) \otimes \CO_{\p}(1) ) \cdot
			[ S_{\beta} ]^{\vir}
		\end{equation}
		where for a class $\gamma \in \mathrm{NS}(S)$ we write
		$S_{\gamma} = \p( H^0(\CO(\gamma)))$ for the Hilbert scheme of curves in class $\gamma$ (which of course for a K3 surface is just the linear system).
		The virtual class $[ S_{\beta} ]^{\vir}$ in \eqref{3sdfis111} is the natural one appearing in Seiberg-Witten theory and can be identified with the first degeneracy locus of the complex $R \Gamma(\CO(\beta))$.
		Concretely, it is described as follows (see also \cite[Sec.2]{KT2}).
		Choose a fixed ample divisor $A \subset S$ such that $H^{\geq 1}(\CO(\beta+A)) = 0$. Let $\gamma = \beta + [A] \in \mathrm{NS}(S)$.
		There exists an embedding
		\[ S_{\beta} \to S_{\gamma}, \quad C \mapsto C + A. \]
		Its image consists of those divisors $D = \{ s = 0 \}$ which contain $A$,
		or equivalently, for which the composition $\CO_S \xrightarrow{s} \CO_S(\gamma) \to \CO_S(\gamma)|_{A}$ vanishes.
		Globally, let $\CD \subset S_{\gamma} \times S$ be the universal divisor and let $\pi : S_{\gamma} \times S \to S_{\gamma}$ be the projection.
		There exists a universal section $s : \CO \to \CO(\CD)$ which yields the sequence
		$\CO \to \CO(\CD) \to \CO(\CD)|_{S_{\gamma} \times A}$.
		Pushing forward, and using that $\CO(\CD) = \pi_S^{\ast}(\CO(\gamma)) \otimes \CO_{S_{\gamma}}(1)$ we find that $S_{\beta}$ is naturally cut out by a section of
		\[ 
		\pi_{\ast}(\CO(\CD)|_{S_{\gamma} \times A}) = H^0(\CO(\beta+A)|A) \otimes \CO(1).
		\] 
		The associated virtual class $[S_{\beta}]^{\vir}$ is the localized Euler class.
		Using the sequence 
		\[ 0 \to H^0(\CO(\beta)) \xrightarrow{f} H^0(\CO(\beta+A)) \to H^0(\CO(\beta+A)|_{A})
		\to H^1(\CO(\beta)) \to 0 \]
		and that $S_{\beta}$ is cut out by $\mathrm{Cokernel(f)} \otimes \CO(1)$ one obtains that
		\[ S_{\beta}^{\vir} = e(H^1(\CO(\beta)) \otimes \CO(1)) \cap [ S_{\beta} ]. \]
		This shows the claim.
	\end{proof}
	
	Let $\CZ \subset S^{[n]} \times S$ be the universal subscheme of the Hilbert scheme of points $S^{[n]}$.
	Given $\alpha \in H^{\ast}(S)$ we define the descendants
	\[ \tau_k^{S^{[n]}}(\alpha) := \pi_{\ast}( \ch_{2+k}(\CO_{\CZ}) \pi_S^{\ast}(\alpha) ) \]
	where $\pi, \pi_S$ are the projections of $S^{[n]} \times S$ to the factors.
	
	We consider integrals over $S_{\beta}^{[n_1, n_2]}$ of polynomials in the pullback of the classes
	%\quad k_i \geq 0, \alpha_i \in H^{\ast}(S), \]
	\[ \tau_{k_1}^{S^{[n_1]}}( \alpha_1 ),
	\quad \tau_{k_2}^{S^{[n_2]}}( \alpha_2 ),
	\quad z = c_1(\CO_{\p}(1)), \quad k_i \geq 0, \alpha_i \in H^{\ast}(S), \]
	where the pullback is by the composition
	of the inclusion \eqref{inclusion} with the projection to the factors.
	By Theorem~\ref{thm:nested hilb} we can reduce any such integral to an integral of such type of classes  on $S^{[n_1]} \times S^{[n_2]} \times \p$.
	Integrating out $\p$ and using \cite{EGL} one obtains:
	%(see \cite{GNY} for the product case)
	%one obtains the following:
	
	%\[ \tau_{k_{i, 1}}^{S^{[n_1]}}( \alpha_{i,1} ),
	%\quad \tau_{k_{j,2}}^{S^{[n_2]}}( \alpha_{j,2} ),
	%\quad z = c_1(\CO_{\p}(1)) \]
	%for some $k_{i,1}, k_{j,1} \geq 0$ and $\alpha_{i,1}, \alpha_{j,2} \in H^{\ast}(S)$.
	
	\begin{thm}(Universality) \label{thm:universality}
		Let $P$ be a polynomial.
		Let $\alpha_{i,1}, \alpha_{j_2} \in H^{\ast}(S)$ be homogeneous classes and let $k_{i,1}, k_{j,2} \geq 0$ be integers. 
		Then the integral
		\[ 
		\int_{ \left[ S_{\beta}^{[n_1, n_2]} \right]^{\text{vir}} }
		P( \tau_{k_{i,1}}^{S^{[n_1]}}( \alpha_{i,1} ),\ 
		\tau_{k_{j,2}}^{S^{[n_2]}}( \alpha_{j,2} ),
		\ z)
		\]
		%for some polynomial $P$, 
		depends upon $(S,\beta, \alpha_{i,1}, \alpha_{j,2})$
		only through the intersection pairings of the classes
		$\beta, \alpha_{i_1}, \alpha_{j_2}, 1, \pt$.
	\end{thm}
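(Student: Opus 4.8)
The plan is to reduce the integral over the nested Hilbert scheme to an integral over the product $S^{[n_1]} \times S^{[n_2]} \times \p$ and then to invoke the Ellingsrud--G\"ottsche--Lehn universality theorem \cite{EGL}. The key input is Theorem~\ref{thm:nested hilb}, which expresses $j_{\ast}[S_{\beta}^{[n_1,n_2]}]^{\vir}$ as an explicit universal polynomial in the Chern classes of $R\Gamma(\CO(\beta))$, $R\Hom_S(\CI_1, \CI_2 \otimes \CO(\beta))$, $\CO_{\p}(1)$, and $\CO_{\p^1}(1)$, capped with the fundamental class of $S^{[n_1]} \times S^{[n_2]} \times \p$. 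Since $P$ is a polynomial in the descendents $\tau_{k_{i,1}}^{S^{[n_1]}}(\alpha_{i,1})$, $\tau_{k_{j,2}}^{S^{[n_2]}}(\alpha_{j,2})$, and $z = c_1(\CO_{\p}(1))$, pulled back along the composition of \eqref{inclusion} with the projections, the projection formula turns the integral in question into
\[
\int_{S^{[n_1]} \times S^{[n_2]} \times \p}
P(\tau_{k_{i,1}}^{S^{[n_1]}}(\alpha_{i,1}), \tau_{k_{j,2}}^{S^{[n_2]}}(\alpha_{j,2}), z) \cdot \big( j_{\ast}[S_{\beta}^{[n_1,n_2]}]^{\vir} \big).
\]

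Next I would integrate out the projective space factor $\p = \p(H^0(\CO(\beta)))$. Here one must be careful that the dimension of $\p$ itself depends on $\beta$ (through $h^0(\CO(\beta)) = 1 + \beta^2/2 + h^1(\CO(\beta))$), so one cannot naively say ``the answer is universal in $\beta$''. The point is that the class being integrated over $\p$ is a polynomial in $z = c_1(\CO_{\p}(1))$ whose coefficients are universal in terms of Chern classes of complexes on $S^{[n_1]} \times S^{[n_2]} \times S$; pushing forward along $S^{[n_1]} \times S^{[n_2]} \times \p \to S^{[n_1]} \times S^{[n_2]}$ picks out the coefficient of $z^{\dim \p}$, and one checks (using the Seiberg--Witten description of $[S_\beta]^{\vir}$ recorded in the proof of Theorem~\ref{thm:nested hilb}, in particular the factor $e(H^1(\CO(\beta)) \otimes \CO(1))$ which exactly compensates the contribution of $h^1$) that the resulting class on $S^{[n_1]} \times S^{[n_2]} \times S$ is a universal polynomial expression in $\ch(\CO_{\CZ_1})$, $\ch(\CO_{\CZ_2})$, and $\ch(\CO(\beta))$ — depending on $\beta$ only through $\beta^2 = \beta \cdot \beta$ and the pairings $\beta \cdot \alpha_{i,1}$, $\beta \cdot \alpha_{j,2}$ (which appear via $\int_S \ch(\CO(\beta))\, \alpha$).

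Finally I would apply the universality result of \cite{EGL}: any integral over $S^{[n_1]} \times S^{[n_2]}$ of a polynomial in the Chern classes of tautological sheaves built from $\CI_1, \CI_2$ and a fixed line bundle $L = \CO(\beta)$ (equivalently in the descendents $\tau_{k}^{S^{[n_i]}}(\alpha)$ and classes pulled back from $S$) is a universal polynomial in the Chern numbers of $S$ and the intersection numbers of $c_1(L) = \beta$ with the $\alpha$'s and with itself. Since $S$ is a K3 surface its Chern numbers are fixed ($c_1 = 0$, $c_2 = 24$, $K_S = 0$), so the only remaining dependence is on the pairings among $\beta, \alpha_{i,1}, \alpha_{j,2}, 1, \pt$, which is the assertion. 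The main obstacle — the step requiring genuine care rather than bookkeeping — is the treatment of the $\p$-integration when $H^1(\CO(\beta)) \neq 0$: one has to verify that combining the formula of Theorem~\ref{thm:nested hilb} with the localized-Euler-class description of $[S_\beta]^{\vir}$ yields, after pushing forward along $\p$, a class whose coefficients are genuinely \emph{universal} (i.e.\ do not secretly remember more of $\beta$ than its pairings), and in particular that the $h^1$-dependent dimension shift cancels cleanly.
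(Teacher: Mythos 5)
Your proposal is correct and follows essentially the same route as the paper: push forward via Theorem~\ref{thm:nested hilb} to $S^{[n_1]} \times S^{[n_2]} \times \p$, integrate out the $\p$-factor, and invoke \cite{EGL}. The paper's own proof is just this argument stated tersely; your extra care about the $h^1(\CO(\beta))$-dependent dimension of $\p$ being compensated by the factor $c_1(\CO(1))^{h^1(\CO(\beta))}$ (the Euler class $e(H^1(\CO(\beta)) \otimes \CO(1))$) is exactly the mechanism implicit in the paper's formula, so there is no genuine difference in approach.
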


	\subsection{Universality}
	Let $\eta \in S^{[n]}$ be a fixed length $n$ subscheme and let $I_{\eta} \subset \CO_S$ be its ideal sheaf.
	We consider the Quot scheme 
	$\Quot(I_{\eta}, u)$ 
	for Mukai vector $u = (0,\beta,m)$
	with $\beta \in \mathrm{NS}(S)$ effective.
	
	The Quot scheme $\Quot(I_{\eta}, u)$ parametrizes sequences of the form
	\[ 0 \to I_{z}(-\beta) \to I_{\eta} \to Q \to 0 \]
	for some $z \in S^{[n_1]}$, where one computes that
	\[ n_1 = m + n + \beta^2/2. \]
	Hence the Quot scheme is naturally a subscheme of the nested Hilbert scheme:
	\begin{equation} \Quot(I_{\eta}, (0,\beta,m)) = \pi_2^{-1}(\eta) \subset S_{\beta}^{[n_1, n_2]} \label{identification}
	\end{equation}
	where $\pi_2 : S_{\beta}^{[n_1, n_2]} \to S^{[n_2]}$
	is the projection and $n_2=n$.
	
	\begin{lemma}
		We have the following comparison
		of virtual cycles:
		\[
		[ \Quot(I_{\eta},u) ]^{\vir}
		=
		\iota_{\eta}^{!} [ S_{\beta}^{[n_1, n_2]} ]^{\text{vir}}.
		\]
		where $\iota_{\eta} : \{ \eta \} \to S^{[n_2]}$ is the inclusion.
	\end{lemma}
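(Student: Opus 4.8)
The plan is to realise both sides as coming from a single relative perfect obstruction theory over the smooth base $S^{[n_2]}$, and then appeal to the functoriality of virtual pullback. By the identification \eqref{identification}, $\Quot(I_\eta,u)$ with $u=(0,\beta,m)$ is the scheme-theoretic fibre $\pi_2^{-1}(\eta)$ of the projection $\pi_2\colon S_\beta^{[n_1,n_2]}\to S^{[n_2]}$, where $n_2=n$ and $n_1 = m+n+\beta^2/2$, so we sit in the Cartesian square
\[
\begin{array}{ccc}
\Quot(I_\eta,u) & \hookrightarrow & S_\beta^{[n_1,n_2]}\\
\downarrow & & \downarrow\\
\{\eta\} & \stackrel{\iota_\eta}{\hookrightarrow} & S^{[n_2]}.
\end{array}
\]
Since $S^{[n_2]}$ is smooth of dimension $2n$, the inclusion $\iota_\eta$ is a regular embedding with normal bundle $T_{S^{[n_2]}}|_\eta$, and the refined Gysin map $\iota_\eta^{!}$ is defined; moreover $\iota_\eta^{!}[S_\beta^{[n_1,n_2]}]^{\vir}$ has dimension $(n_1+n_2+\beta^2/2+1)-2n=\vd$, matching the reduced virtual dimension of $\Quot(I_\eta,u)$, so the statement is at least dimensionally consistent.

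The core step I would carry out is to extract from the Gholampour--Thomas construction of Theorem~\ref{thm:nested hilb} the relative (reduced) perfect obstruction theory of $\pi_2$, and to identify it fibrewise with the Quot obstruction theory of Section~\ref{subsec:quot k3}. Write $\CK=\CI_1(-\beta)\otimes\CO_\p(-1)$ and $\CQ=\CI_2/\CK$ for the universal kernel and quotient, so that $R\Hom_S(\CI_1,\CI_2\otimes\CO(\beta))\otimes\CO_\p(1)=R\Hom_S(\CK,\CI_2)$. Using the exact triangle $\CK\to\CI_2\to\CQ$ one has $R\Hom_S(\CK,\CI_2)=R\Hom_S(\CK,\CK)+R\Hom_S(\CK,\CQ)$ in $K$-theory; since $\Hilb^{n_2}$ of a K3 is smooth one has $R\Hom_S(\CI_2,\CI_2)_0\simeq T_{S^{[n_2]}}[-1]$, i.e. $\pi_2^{\ast}T_{S^{[n_2]}}=-R\Hom_S(\CI_2,\CI_2)_0$ in $K$-theory; and $R\Hom_S(\CK,\CK)-R\Hom_S(\CK,\CK)_0=R\Gamma(\CO_S)$. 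Substituting these three identities into the formula of Theorem~\ref{thm:nested hilb} gives
\[
T^{\vir}_{S_\beta^{[n_1,n_2]}}-\pi_2^{\ast}T_{S^{[n_2]}}
= R\Hom_S(\CK,\CQ)+R\Gamma(\CO_S)-\CO
= R\Hom_S(\CK,\CQ)+\CO
\]
in $K$-theory (using $H^0(\CO_S)=H^2(\CO_S)=\BC$), i.e. the relative virtual tangent complex of $\pi_2$ equals $T^{\vir}_{\Quot(I_\eta,u)}$. This $K$-theoretic equality is the shadow of an honest relative obstruction theory over $S^{[n_2]}$ — which is in fact (the relative form of) the obstruction theory used in the GT construction, whose fibre over a point is the $\Hom$--$\Ext$ obstruction theory of a Quot scheme. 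Restricting it along $\iota_\eta$ — under which $\CI_2$ specialises to the constant sheaf $I_\eta$ while $\CK,\CQ$ restrict to the universal kernel and quotient on $\Quot(I_\eta,u)$, and $\pi_S^{\ast}(I_\eta)=\CI_2|$ — recovers exactly the obstruction theory of Section~\ref{subsec:quot k3}. Finally the trace cosections match: the cosection $h^1((T^{\std})^{\vee})\to\CO$ on $\Quot(I_\eta,u)$ built in Section~\ref{subsec:quot k3} from $\Ext^1(\CK,\CQ)\to\Ext^2(\CQ,\CQ)\xrightarrow{tr}H^2(\CO_S)=\CO$ is the restriction along $\iota_\eta$ of the corresponding trace cosection used to reduce the obstruction theory on $S_\beta^{[n_1,n_2]}$.

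Granting this compatibility, the comparison of cycles is formal. The absolute GT obstruction theory on $S_\beta^{[n_1,n_2]}$ fits into a morphism of exact triangles over $(\pi_2^{\ast}\BL_{S^{[n_2]}}\to\BL_{S_\beta^{[n_1,n_2]}}\to\BL_{\pi_2})$ together with the relative one just described, so by the functoriality of virtual pull-back (Behrend--Fantechi, Manolache, Kim--Kresch--Pantev) one obtains $[\Quot(I_\eta,u)]^{\std}=\iota_\eta^{!}[S_\beta^{[n_1,n_2]}]^{\std}$ for the ordinary virtual classes; since the cosections are compatible, the same identity passes to the cosection-localized (reduced) classes by the functoriality of cosection localization \cite{KL}, which is the asserted equality $[\Quot(I_\eta,u)]^{\vir}=\iota_\eta^{!}[S_\beta^{[n_1,n_2]}]^{\vir}$. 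I expect the only real obstacle to lie in the first half of the previous paragraph: unwinding the GT construction enough to see that it genuinely relativizes over $S^{[n_2]}$ and that its fibrewise restriction — obstruction theory and trace cosection together — is literally the Quot-scheme setup of Section~\ref{subsec:quot k3}. Once this bookkeeping is in place, everything else is a black-box application of virtual-pullback functoriality.
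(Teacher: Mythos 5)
Your dimension check and the $K$-theory identity are correct, and the latter is exactly the ``small calculation'' the paper performs: using $\CK=\CI_1(-\beta)\otimes\CO_{\p}(-1)$, $\CQ=\CI_2-\CK$, $-R\Hom_S(\CI_2,\CI_2)_0=\pi_2^{\ast}T_{S^{[n_2]}}$ and $R\Gamma(\CO_S)=2\CO$, the Gholampour--Thomas virtual tangent bundle equals $\pi_2^{\ast}T_{S^{[n_2]}}+R\Hom_S(\CK,\CQ)+\CO$ in $K$-theory. The gap is in how you close the argument. You need (and you yourself flag this as the ``only real obstacle'') that the GT reduced obstruction theory on $S_{\beta}^{[n_1,n_2]}$ genuinely arises as a relative perfect obstruction theory over $S^{[n_2]}$, fitting into a compatibility triangle with $\pi_2^{\ast}\BL_{S^{[n_2]}}$, whose fibrewise restriction is \emph{literally} the Quot-scheme theory of Section~\ref{subsec:quot k3}, with matching trace cosections, so that virtual-pullback functoriality and cosection-localized functoriality apply. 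None of this is available off the shelf: the GT theory is built via degeneracy loci and is reduced by construction (it is not presented as a cosection-localized standard theory), so the ``black-box'' step you defer to is precisely the nontrivial content, and as written your proof is incomplete at that point.

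The paper avoids this entirely by reversing the roles of the two sides. It identifies the nested Hilbert scheme itself with the relative Quot scheme $\Quot(\CI/S^{[n]},u)$ over $S^{[n]}$, where $\CI$ is the universal ideal sheaf; this space carries the natural relative (reduced) Quot obstruction theory of Lemma~\ref{lemma:canonical pot}, which by construction restricts over $\eta\in S^{[n]}$ to the obstruction theory of $\Quot(I_{\eta},u)$, giving $\iota_{\eta}^{!}[\Quot(\CI/S^{[n]},u)]^{\vir}=[\Quot(I_{\eta},u)]^{\vir}$ with no matching of cosections required. The comparison with the GT class is then made only at the level of $K$-theory: since the virtual class depends on the perfect obstruction theory only through the $K$-theory class of its virtual tangent bundle, the identity you computed already yields $[\Quot(\CI/S^{[n]},u)]^{\vir}=[S_{\beta}^{[n_1,n_2]}]^{\vir}$, and the lemma follows. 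So your computation is the heart of the matter, but to make your route rigorous you would either have to carry out the relativization of the GT theory and the cosection matching, or restructure the argument as the paper does.
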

	\begin{proof}
		Let $\CI$ denote the universal ideal sheaf on $S^{[n]} \times S$
		and let $\pi : S^{[n]} \times S \to S^{[n]}$ be the projection.
		We can identify the $\pi$-relative Quot scheme $\Quot( \CI / S^{[n]}, u)$
		(whose fiber over a point $\eta \in S^{[n]}$ is $\Quot(I_{\eta}, u)$)
		with the nested Hilbert scheme:
		\begin{equation} \Quot( \CI/S^{[n]}, u ) = S_{\beta}^{[n_1, n_2]}. 
			\label{quot=hilb} \end{equation}
		The left hand side carries the natural perfect obstruction theory
		of Lemma~\ref{lemma:canonical pot} taken relative to the base $S^{[n]}$.
		Its virtual tangent bundle is
		\[
		T_{\Quot( \CI/S^{[n]}, u )}^{\vir} = R \Hom_S( \CK, \CQ ) + \CO
		+ \Ext^1_S( \CI, \CI )
		\]
		and its virtual class satisfies
		\[
		\iota_{\eta}^{!} [ \Quot( \CI/S^{[n]}, u ) ]^{\vir} =
		[ \Quot( \CI_{\eta}, u ) ]^{\vir}.
		\]
		
		Under the identification \eqref{quot=hilb} we have
		\begin{equation} \label{KQ identification} \CK = \CI_1(-\beta) \otimes \CO_{\p}(-1), \quad
			\CQ = \CI_2 - \CK. \end{equation}
		By the first claim in Theorem~\ref{thm:nested hilb} a small calculation shows that
		\[ T_{\Quot( \CI/S^{[n]}, u )}^{\vir} = T_{S_{\beta}^{[n_1, n_2]}}^{\vir}. \]
		Since the virtual class depends on the perfect obstruction theory only through the $K$-theory class of the virtual tangent bundle we get
		\[ [ \Quot( \CI/S^{[n]}, u ) ]^{\vir} = [S_{\beta}^{[n_1, n_2]} ]^{\vir}. \]
	\end{proof}
	
	By Theorem~\ref{thm:nested hilb}, the above lemma and \eqref{KQ identification} we obtain:
	\begin{align}
		e^{\vir}( \Quot(I_{\eta}, u) )
		& = \int_{[ \Quot(F,u) ]^{\vir}} c_{\beta^2+m+1}\left( R \Hom_S( \CK, \CQ ) + \CO \right) \notag \\
		& = \int_{[ S_{\beta}^{[n_1, n_2]} ]^{\vir}} c_{\beta^2+m+1}\left( R \Hom_S( \CK, \CQ ) + \CO \right) \pi_2^{\ast}[ F ]. \label{xxggrter}
	\end{align}
	Applying Grothendieck-Riemann-Roch to rewrite the term $e\left( R \Hom_S( \CK, \CQ ) + \CO \right)$ in descendants,
	and applying Theorem~\ref{thm:universality} we conclude the following:
	
	\begin{prop} \label{prop:div independence for Q}
		The integrals $\mathsf{Q}_{n,(\beta,m)} = e^{\vir}( \Quot(I_{\eta}, u) )$
		only depends on $\beta$ via the square $\beta \cdot \beta = 2h-2$.
		We write $ \mathsf{Q}_{n,h,m} :=  \mathsf{Q}_{n,\beta,m}$ from now on.
	\end{prop}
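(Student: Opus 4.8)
The plan is to reduce $\mathsf{Q}_{n,(\beta,m)}$ to a tautological integral over a product of Hilbert schemes of points and then apply the universality statement of Theorem~\ref{thm:universality}. The starting point is formula \eqref{xxggrter}, which reads
\[ \mathsf{Q}_{n,(\beta,m)} = \int_{[S_{\beta}^{[n_1,n_2]}]^{\vir}} c_{\beta^2+m+1}\big(R\Hom_S(\CK,\CQ)+\CO\big)\,\pi_2^{\ast}[F], \]
with $n_2=n$ and $n_1=m+n+\beta^2/2$. Since the integers $n_1,n_2$ already depend on $\beta$ only through $\beta^2$, it suffices to treat the integral itself.

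First I would use the identifications \eqref{KQ identification}, that is $\CK=\CI_1(-\beta)\otimes\CO_{\p}(-1)$ and $\CQ=\CI_2-\CK$, together with Grothendieck--Riemann--Roch, to rewrite $c_{\beta^2+m+1}(R\Hom_S(\CK,\CQ)+\CO)$ as a universal polynomial in the descendants $\tau_k^{S^{[n_1]}}(\alpha)$, $\tau_k^{S^{[n_2]}}(\alpha)$ and $z=c_1(\CO_{\p}(1))$, with $\alpha$ ranging over a fixed basis of $H^{\ast}(S)$. The only places a non-trivial surface class can enter here are the twist by $\CO(\beta)$, which contributes $1$, $\beta$ and $\beta^2=(\beta\cdot\beta)\pt$, and the relative Todd class, which for a K3 surface equals $\td_S=1+2\pt$ and so contributes only $1$ and $\pt$. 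It remains to treat the factor $\pi_2^{\ast}[F]=\pi_2^{\ast}[\eta]$, the pullback of the point class of $S^{[n]}$; since $H^{4n}(S^{[n]})$ is one-dimensional, $[\eta]$ is a rational multiple of $c_{2n}(T_{S^{[n]}})$ and is therefore again expressible, via Grothendieck--Riemann--Roch, as a tautological polynomial in descendants with argument in the fixed basis. Thus the whole integrand is of the form required by Theorem~\ref{thm:universality}.

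Invoking that theorem, the integral then depends on $(S,\beta)$ only through the intersection pairings among $\beta$, $1$, $\pt$ and the fixed basis of $H^2(S)$. For a K3 surface all pairings not involving $\beta$ are constant ($H^2(S,\BZ)$ is the standard K3 lattice, $1\cdot\pt=1$, and the other pairings vanish for degree reasons), and the only pairing involving $\beta$ that is not topologically forced is $\beta\cdot\beta=2h-2$. This is the heart of the matter, and the step I expect to require the most care: one must see that in the polynomial produced by Grothendieck--Riemann--Roch the class $\beta$ only ever enters either through $\beta\cdot\beta$ or reassembled (via $\sum_a(\beta\cdot e^a)e_a=\beta$) as the argument of a descendant $\tau_k(\beta)$, and is never paired against a basis vector of $H^2(S)$, so that the answer is divisibility-blind --- this is precisely the strong form of the Ellingsrud--G\"ottsche--Lehn universality underlying Theorem~\ref{thm:universality}, according to which over a K3 surface the universal polynomial is a polynomial in $n_1$, $n_2$ and $\beta^2$ alone. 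Relatedly, one checks that the a priori dependence on $h^1(\CO(\beta))$ --- which is not a function of $\beta^2$, for instance for multiples of an elliptic fibre class --- drops out: exactly as in the proof of Theorem~\ref{thm:nested hilb}, passing from $[S_\beta]$ to $[S_\beta]^{\vir}$ inserts the factor $z^{h^1(\CO(\beta))}$, which integrated against $[\p^{\,1+\beta^2/2+h^1(\CO(\beta))}]$ merely extracts the coefficient of $z^{1+\beta^2/2}$ from the rest of the integrand. Once all of this is in place, $\mathsf{Q}_{n,(\beta,m)}$ depends on $\beta$ only through $\beta^2$, and we may write $\mathsf{Q}_{n,h,m}$ for the common value.
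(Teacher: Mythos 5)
Your argument is essentially the paper's own proof: starting from \eqref{xxggrter}, you rewrite the integrand via Grothendieck--Riemann--Roch as a polynomial in descendants (whose surface arguments lie in the subring generated by $\beta$, namely $1,\beta,\beta^2=(\beta\cdot\beta)\pt,\pt$) and in $z$, and then invoke Theorem~\ref{thm:universality} so that only the pairing $\beta\cdot\beta$ can enter. The extra care you take with $h^1(\CO(\beta))$ is already built into Theorem~\ref{thm:universality}, and for the point-class insertion $\pi_2^{\ast}[\eta]$ it is cleaner to use a descendant expression in $\pt$ alone (e.g.\ a multiple of $\tau_0^{S^{[n]}}(\pt)^n$) rather than $c_{2n}(T_{S^{[n]}})$, which reintroduces $H^2$-classes and would force the isometry-invariance remark you allude to --- but neither point changes the route.
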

	
	\begin{rmk}
		One can be more explicit in what kind of tautological integral over $S^{[n_1]}$ one obtains when computing
		$\mathsf{Q}_{n, (\beta,m)}$. By similar arguments as in \cite{OP} one proves:
		%following \cite{OP}) that
		%$S^{[n_1]}$ is holomorphic-symplectic, so
		%$T_{S^{[n_1]}} \cong \Omega_{S^{[n_1]}}$,
		%and even-dimensional. % (see \cite{OP} where we learned this trick).
		%In the last term we treated $z$ as a formal variable.
		\[
		\begin{aligned}
			e^{\text{red}}( \Quot(I_{\eta}, u) )
			%\mathsf{Q}_{h,n_1, n_2}
			%=
			%e^{\text{red}}( \Quot(F,(0,\beta,m)) )
			%& =
			%\int_{S^{[n_1]} \times \p}
			%c( T_{S^{[n_1]}})
			%\frac{ c_{n_1+n_2}( (x^{[n_1]})^{\vee}(1) + n_2 \CO(1) ) }{ c((x^{[n_1]})^{\vee}(1) + n_2 \CO(1)) }
			%(1+z)^{h+1} \\
			%& =
			%\mathrm{Coeff}_{t^{n_1+n_2}}
			%\int_{S^{[n_1]} \times \p}
			%c( T_{S^{[n_1]}})
			%\frac{ c_t( x^{[n_1]} \otimes e^z ) }{ c( x^{[n_1]} \otimes e^z) }
			%(1+z)^{h+1-n_2} (1+t z)^{n_2} \\
			& =
			\mathrm{Coeff}_{t^{n_1+n_2} z^{-1}}
			%\mathrm{Res}_{z=0}
			\int_{S^{[n_1]}}
			c( T_{S^{[n_1]}})
			\frac{ c_t( x^{[n_1]} \otimes e^z ) }{ c( x^{[n_1]} \otimes e^z) }
			\left( \frac{1+z}{z} \right)^{h+1-n_2} (z^{-1}+t)^{n_2}
		\end{aligned}
		\]
		where $z, t$ are formal variables,
		$x = \CO_S(-\beta) - \CO_{\eta}$,
		and $x^{[n]} = \pi_{\ast}(\pi_S^{\ast}(x) \otimes \CO_{\CZ}) \in K_{\mathrm{alg}}(S^{[n]})$ is the tautological class associated to $x$.
		If $n_2 = 0$, the above discussion specializes to Section 5.2 in \cite{OP}.
		We do not need this expression later on.
	\end{rmk}

	\subsection{Multiplicativity}
	Our goal here is to prove the following structural statement for $\mathsf{Q}_{n,h,m}$.
	Define the series
	\[
	\mathsf{Q}_n(p,q) = \sum_{h \geq 0} \sum_{m \in \BZ} \mathsf{Q}_{n,h,m} q^{h-1} p^m
	\]
	\begin{prop} \label{prop:Qmultiplicativity}
		There exists power series $F_1 \in \BQ((p, p^{-1}))[[q]]$ and $F_2 \in q^{-1} \BQ((p, p^{-1}))[[q]]$ 
		such that
		for any $n \geq 0$ we have
		\[ \mathsf{Q}_n(p,q) = F_1^n F_2. \]
	\end{prop}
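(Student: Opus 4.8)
The plan is to prove the multiplicativity by a degeneration argument in the Hilbert scheme of points, following the strategy already set up in Section~\ref{sec:Quot scheme integrals}. By Proposition~\ref{prop:div independence for Q} we may work with a fixed effective class $\beta$ satisfying $H^{\geq 1}(\CO(\beta)) = 0$ (for instance a primitive ample class of the right square), so that the linear system $\p = \p(H^0(\CO(\beta)))$ has the expected dimension $h$. Using the identification \eqref{identification} of $\Quot(I_\eta,(0,\beta,m))$ with the fiber $\pi_2^{-1}(\eta) \subset S_\beta^{[n_1,n_2]}$ and the virtual class comparison together with \eqref{xxggrter}, we have
\[
\mathsf{Q}_{n,(\beta,m)}
=
\int_{[ S_{\beta}^{[n_1,n]} ]^{\vir}} c_{\beta^2+m+1}\bigl( R\Hom_S(\CK,\CQ) + \CO \bigr)\, \pi_2^{\ast}[\eta],
\]
and by Theorem~\ref{thm:nested hilb} this pushes forward to a tautological integral over $S^{[n_1]}\times S^{[n]}\times\p$ in the descendent classes $\tau^{S^{[n_1]}}_{k}(\alpha)$, $\tau^{S^{[n]}}_{k}(\alpha')$ and $z = c_1(\CO_{\p}(1))$, where the insertion $\pi_2^{\ast}[\eta]$ on the $S^{[n]}$-factor is the point class. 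Integrating out $\p$ (which is allowed since the $\p$-dependence is polynomial in $z$ of degree the relevant Chern class degree) reduces everything to a tautological integral over $S^{[n_1]}\times S^{[n]}$; the point insertion on $S^{[n]}$ then forces $n=n_2$ to be "localized at a point".

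The key step is to understand the $n$-dependence. I would argue that the universality of Theorem~\ref{thm:universality} (via \cite{EGL}) makes the generating series $\mathsf{Q}_n(p,q)$ into a \emph{universal} power series depending only on the four K3 numbers $\beta\cdot\beta$, $\beta\cdot 1$, $\beta\cdot\pt$, $1\cdot\pt$ — in particular it only depends on $h$. Now the point-insertion on the $S^{[n]}$-factor, combined with the structure of the obstruction theory in Theorem~\ref{thm:nested hilb}, means that adding one more point to $\eta$ amounts, after degenerating $S$ (or equivalently, by the standard EGL/degeneration formalism applied to the $S^{[n]}$-factor), to multiplying the generating series by one fixed universal factor $F_1 \in \BQ((p,p^{-1}))[[q]]$ that records the local contribution of one point. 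Concretely, one degenerates the K3 surface $S \rightsquigarrow S \cup_C (\text{K3})$ along a suitable configuration, or more cleanly uses the fact that descendent integrals of the form $\int_{S^{[n_1]}} P(\tau_k(\cdot))\cdot[\text{pt on } S^{[n]}\text{-part}]$ are computed by a multiplicative structure in $n$ via the Nakajima-basis/EGL formula: the point class on $S^{[n]}$ localizes $n_1 = m + n + \beta^2/2$ points of which $n$ are "free" and contribute the same universal local factor each. Writing $F_2 \in q^{-1}\BQ((p,p^{-1}))[[q]]$ for the $n=0$ series $\mathsf{Q}_0(p,q) = e^{\vir}(\Quot(\CO_S,(0,\beta,m)))$ (whose evaluation is \cite[Thm.\ 21]{OP}, hence the $q^{-1}$-prefactor from the $h-1$ shift), we obtain $\mathsf{Q}_n(p,q) = F_1^n F_2$.

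The main obstacle I anticipate is making the "one point contributes one universal factor" claim rigorous: EGL universality alone gives that $\mathsf{Q}_n(p,q)$ is a universal series in $n$ and the K3 numbers, but it does not a priori give the \emph{exponential} (multiplicative) dependence on $n$. To get multiplicativity one needs either (a) a degeneration of $S$ along a trivial elliptic fibration or a configuration $S = X_1 \cup X_2$ together with the degeneration formula for the virtual class of the nested Hilbert scheme, distributing the $n$ points into the two pieces and checking the obstruction theory splits correctly; or (b) a direct cobordism/EGL argument showing the generating function of the relevant tautological integral over $S^{[n_1]}$ with a "rigid point marker" on $n$ of its points is literally an $n$-th power times a base term — this is the kind of statement proved by the methods of \cite{EGL} once one checks that the integrand is "multiplicative" in the appropriate sense (a product of single-point tautological classes). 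I would carry out option (b): identify the integrand from \eqref{xxggrter}, after Grothendieck–Riemann–Roch, as built out of the tautological class $x^{[n_1]}$ of $x = \CO_S(-\beta) - \CO_\eta$ (as in the Remark following Proposition~\ref{prop:div independence for Q}), note that the $\CO_\eta$-part contributes $n$ identical local factors to the universal series, and conclude. Once this multiplicative structure is established, the identification of $F_2$ with the Oprea–Pandharipande series and of $F_1$ with the single-point factor is a direct consequence, and Theorem~\ref{thm:Correction term} then follows by computing $F_1$ and $F_2$ explicitly.
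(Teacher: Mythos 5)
Your reduction to tautological integrals over $S^{[n_1]}\times \p$ with the point insertion $\pi_2^{\ast}[\eta]$ matches the paper's Section~\ref{sec:Quot scheme integrals}, and you correctly identify the real difficulty: universality alone does not give the exponential dependence on $n$. But the route you commit to, your option (b), leaves exactly that step as an assertion. After GRR the $n$-dependence does \emph{not} enter only through ``$n$ identical local factors from $\CO_\eta$'': it enters (i) through $\ch_2(x)$ for $x=\CO_S(-\beta)-\CO_\eta$, i.e.\ through an exponent in the EGL product formula for an auxiliary generating series whose variable tracks $n_1$; (ii) through the index shift $n_1=n+m+\beta^2/2$, i.e.\ through \emph{which} coefficient of that series you extract and a corresponding shift of the power of $p$; and (iii) through the exponent of the $z$-factor $\left(\tfrac{1+z}{z}\right)^{h+1-n}(z^{-1}+t)^{n}$ produced by integrating out $\p$ against the point insertion (cf.\ the Remark after Proposition~\ref{prop:div independence for Q}). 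Converting these three intertwined sources of $n$-dependence into the single statement $\mathsf{Q}_n(p,q)=F_1^nF_2$ in the fixed variables $(p,q)$ tracking $(m,h)$ requires a genuine change-of-variables/residue manipulation (in the style of G\"ottsche--Shende), and none of it appears in your sketch; ``note that the $\CO_\eta$-part contributes $n$ identical local factors and conclude'' is a restatement of the proposition, not a proof of it. (A smaller point: the genus involved is the non-invertible Euler class, so even the appeal to EGL multiplicativity needs justification; and a K3 does not degenerate into two surfaces $X_1\cup X_2$ of the kind your option (a) vaguely suggests.)

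The paper proves the proposition by a concrete form of your discarded option (a): take $S\to\p^1$ elliptic with section, $\beta_h=B+hF$, and degenerate $S\rightsquigarrow S\cup_E(\p^1\times E)\cup_E\cdots\cup_E(\p^1\times E)$ with $n+1$ bubbles, arranging the family so that one point of $\eta$ specializes to each of the first $n$ copies of $\p^1\times E$. The Li--Wu degeneration formula for the relative Quot schemes (together with the check that the class $R\Hom(K,Q)+\CO$ splits as a sum of bubble contributions, and the fact that $\beta_h\cdot E=1$ so the relative condition is a single point) immediately yields
\[
\mathsf{Q}_n(p,q)=\mathsf{Q}(S/E)\cdot\bigl(\mathsf{Q}_{\pt}(\p^1\times E/E_{0,\infty})\bigr)^{n}\cdot\mathsf{Q}(\p^1\times E/E_{0}),
\]
which is the desired $n$-th power structure with no universality input at this step. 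So the least-effort repair of your proposal is to carry out (a) in this form; if you insist on (b), you must actually supply the EGL product formula plus the change-of-variables argument indicated above, which is where the content of the proposition lies.
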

	\begin{proof}
		Let $S \to \p^1$ be an elliptic K3 surface with section $B \subset S$ and take $\beta_h = B+hF$.
		Let $E \subset S$ be a fixed smooth fiber over the point $x \in \p^1$.
		We apply the Li-Wu degeneration formula \cite{LiWu} to the degeneration
		\begin{equation} S \rightsquigarrow S \cup_{E} (\p^1 \times E) \cup_E \ldots \cup_E (\p^1 \times E) \label{deg} \end{equation}
		where there are $n+1$ copies of $\p^1 \times E$ and the $i$-th copy is glued along the divisor\footnote{We write $E_z$ to denote the fiber over $z \in \p^1$ of the projection $E \times \p^1 \to \p^1$.} $E_{\infty}$ to the divisor $E_0$ in the $(i+1)$-th copy (for $i=1, \ldots, n$). Moreover, $S$ is glued along $E$ to $E_0$ in the first copy.
		This type of degeneration plays a crucial role in \cite{MPT}.
		
		Let $p \in \p^1 \times E$ be a point disjoint from the relative divisor $E_{0,\infty} = \{ 0, \infty \} \times E$,
		and let $v_{d,m} = (0, [\p^1] + d [E], m)$.
		Let $\Quot_{\p^1 \times E / E_{0, \infty}}( I_p, v_{d,m} )$ be the Quot scheme of the relative pair $(\p^1 \times E, E_{0, \infty})$.
		It parametrizes quotients $r^{\ast} I_p \twoheadrightarrow Q$ where $r_{\ast} \ch(Q) = v_{d,m}$
		and $r$ is the canonical projection from an expanded degeneration to $\p^1 \times E$ along $E_{0,\infty}$, see \cite{LiWu}.
		Since the support of $Q$ meets the relative divisor $E_0$ with multiplicity $1$ there exists evaluation morphisms
		$\ev_0 : \Quot_{\p^1 \times E / E_{0, \infty}}( I_p, v_{d,m} ) \to E_0$.
		Using the obstruction theory of Lemma \ref{lemma:canonical pot}, the fiber
		\[ \Quot_{\p^1 \times E / E_{0, \infty}}( I_p, v_{d,m} )_0 = \ev_0^{-1}(0_E) \]
		is seen to have virtual tangent bundle $R \Hom_{\p^1 \times E}( \CK, \CQ ) - \CO$.
		Define the generating series:
		\[ \mathsf{Q}_{\pt}(\p^1 \times E/E_{0,\infty}) = \sum_{d \geq 0} \sum_{m \in \BZ} q^d p^m e^{\vir}\left( \Quot_{\p^1 \times E / E_{0, \infty}}( I_p, v_{d,m} )_0 \right) \]
		
		Similarly, let $\Quot_{\p^1 \times E / E_{0}}( \CO_{\p^1 \times E}, v_{d,m} )_0$ be the relative Quot scheme on $\p^1 \times E / E_0$ which
		parametrizes quotients $\CO \to Q$ with $r_{\ast} \ch(Q) = v_{d,m}$ such that the restriction to the relative fiber 
		$(\CO_{\p^1 \times E} \to Q)|_{E_0}$ is isomorphic to $\CO_{E_0} \to \CO_{0}$.
		Define
		\[ \mathsf{Q}(\p^1 \times E/E_{0}) = \sum_{d \geq 0} \sum_{m \in \BZ} q^d p^m e^{\vir}\left( \Quot_{\p^1 \times E / E_{0}}( \CO_{\p^1 \times E}, v_{d,m} )_0 \right). \]
		Finally, let
		$\Quot_{S / E}( \CO_S, (0,\beta_h,m)$ parametrize quotients $\CO \twoheadrightarrow Q$ on the pair $(S,E)$ with $\ch(Q) = (0,\beta_h, m)$.
		Define
		\[
		\mathsf{Q}(S/E) = \sum_{h \geq 0} \sum_{m \in \BZ} q^{h-1} p^m e^{\vir}\left( \Quot_{S / E}( \CO_S, (0,\beta_h,m) ) \right).
		\]
		
		Let $I_{\eta}$ be the ideal sheaf of a length $n$ subscheme $\eta = x_1 + \ldots + x_n$ for distinct points $x_i \in S$.
		We can choose a simple degeneration $\pi : \CS \to C$ over a smooth curve $C$ 
		together with disjoint sections $p_i : C \to \CS$ such that
		\begin{enumerate}
			\item[(i)] Over the point $c_0 \in C$, $(\pi^{-1}(c_0), p_1(c_0), \ldots, p_n(c_0)) = (S, x_1, \ldots, x_n)$
			\item[(ii)] Over the point $c_1 \in C$, the fiber $\pi^{-1}(c_1)$ is the surface on the right of \eqref{deg}, 
			and $p_i(c_1)$ is a point on the $i$-th copy of $\p^1 \times E$ away from the relative divisors.
		\end{enumerate}
		We then apply the Li-Wu degeneration formula to
		$\Quot_{\CS \to C}( \BI, (0, \beta_h, m))$,
		the Quot scheme relative to the base $C$,
		where $\BI$ is the ideal sheaf of the union $\cup_i p_i(C)$.
		One finds that:
		\[
		\mathsf{Q}_n(p,q)
		=
		\mathsf{Q}(S/E) \cdot \left(\mathsf{Q}_{\pt}(\p^1 \times E/E_{0,\infty}) \right)^n \cdot  \mathsf{Q}(\p^1 \times E/E_{0}),
		\]
		which implies the claim.
		(The main point is that the integrand splits nicely: If $0 \to K \to I_{p_1, \ldots, p_n} \to Q \to 0$ 
		is a quotient sequence on the right side of \eqref{deg}, and $Q_i, K_i$ are the restriction to the $i$-th component (with $S$ the $0$-th component),
		then by applying $\Hom(K, - )$ to the sequence $0 \to Q \to \sum_{i=0}^{n+1} Q_i \to \sum_{j=0}^{n} Q|_{x_j} \to 0$ and using adjunction one finds that
		\[ R \Hom(K,Q) + \CO = ( R \Hom(K_0, Q_0) + \CO ) + \sum_{i=1}^{n+1} (R \Hom(K_i, Q_i) - \CO ). \ ) \]
	\end{proof}

	\section{Analysis of the cap geometry}
	\label{section:cap}
	\subsection{Overview}
	Let $S$ be a K3 surface and let $\Lambda = H^{\ast}(S,\BZ)$ be the Mukai lattice, see Section~\ref{section:degree}.
	Let $v \in \Lambda$ be a primitive vector of positive rank satisfying $v \cdot v = 2n-2$.
	Let $H$ be a polarization on $S$ and assume that the moduli space of $H$-Gieseker stable sheaves $M(v)$ is proper.
	We also fix a class $y \in K_{\mathrm{alg}}(S)$ such that $y \cdot v = 1$, which implies that $M(v)$ is fine and admits a canonical universal family (see Section~\ref{subsec:quasimap degree}).
	
	%and let $H$ be a polarization on $S$ such that the moduli space of $H$-Gieseker-stable sheaves $M(v)$ proper.
	%We assume that $M(v)$ is fine.
	%Assuming the choice of a $v$-generic polarization $H$ on $S$
	%(so that the moduli space $M(v)$ of $H$-stable sheaves is proper), let
	Consider the moduli space
	\[ M_{v,w}(S \times \p^1/S_{\infty}) \]
	parametrizing torsion-free, generically $H$-stable sheaves $E$ of fixed determinant\footnote{We refer to \cite[Section 3]{N1} for the precise definition of generically $H$-stable and how the determinant is fixed. Essentially, generically $H$-stable is the condition that the restriction of the sheaf to the generic fiber over $\p^1$ (or in case that the sheaf is defined over a degeneration of $\p^1$, the restrictions to all generic fibers of this degeneration) is $H$-stable.
		The sheaf $E$ has fixed determinant if $\det p_{\ast}( E \cdot \pi_S^{\ast}(y) ) \cong \CO$, where $p$ is the projection to the curve.}
	on the relative geometry $(S \times \p^1, S_{\infty})$
	with Mukai vector $v(E) = (v,w)$. By \cite{N1} the moduli space is proper,
	and because of the existence of the class $y$, it is fine and admits a canonical universal family.
	By Lemma~\ref{lemma:quasi degree} the class $w$ satisfies
\begin{equation} w \cdot v(y) = 0. \label{dersdf} \end{equation}
% after tensoring with $\CO_{\p^1}(k)$ for some $k$
%	we can and will always assume that
%	\[ \div(v \wedge w) = \div(w) \]
%	and that $v \wedge w \neq 0$.
	
	Let $\CE$ be the universal sheaf over $M_{v,w}(S \times \p^1/S_{\infty})$. % (which we assume exists, the assumption can be removed by working with twisted sheaves).
	The standard (non-reduced) perfect obstruction theory of $M_{v,w}(S \times \p^1/S_{\infty})$ has virtual tangent bundle
	\[ T^{\text{vir}} = R \Hom_{S \times \p^1}(\CE,\CE)_0[1] \]
	and is of virtual dimension $v^2 + \chi(S,\CO_S) = 2n$.
	The associated standard virtual class vanishes because of the existence of a cosection \cite{N2}.
	By work of Kiem-Li \cite{KL} there exists a reduced virtual class.
	The reduced virtual dimension is $2n+1$.
	
	The group $\BC^{\ast}$ acts on the base $\p^1$ with tangent weight $-t$ at the point $0 \in \p^1$, where we let $\Ft$ be the trivial line bundle with $\BC^{\ast}$-action of weight $1$ and set $t = c_1(\Ft)$.
	We obtain an induced action on the moduli space $M_{v,w}(S \times \p^1/S_{\infty})$.
	%together with equivariant obstruction theories. 
	Let
	\begin{equation} M_{\mathrm{ext}} \subset M_{v,w}(S \times \p^1/S_{\infty})^{\BC^{\ast}} \label{mext inclusion} \end{equation}
	be the component of the fixed locus
	which parametrizes sheaves on $S \times \p^1$ (i.e. sheaves on an expanded degeneration are excluded). We call $M_{\mathrm{ext}}$
	the \emph{extremal} component.
	
	The goal of this section is to analyze the contribution of this extremal component to the
	Donaldson-Thomas invariant.
	We first prove that only 'single-jump' loci contribute to the invariants,
	and then relate these contributions to Quot scheme integrals.
	This determines the wall-crossing term in the quasimap wall-crossing.

	\subsection{Virtual class}
	Let $E \in M_{\mathrm{ext}}$ and write
	\[ \p^1 = \BA^1_{\infty} \cup \BA^1 \]
	where $\BA^1_{\infty}, \BA^1$ is the standard affine chart around $\infty$ and $0$ respectively.
	Since $E$ is generically stable and its restriction to $S_{\infty}$ is stable,
	we have
	\[ E|_{S \times \BA^1_{\infty} } = \pi_S^{\ast}(F) \]
	for some stable sheaf $F \in M(v)$.
	Over $0$, we identify the equivariant sheaf $E|_{S \times \BA^1}$ with a graded $\CO_S[x]$-module on $S$,
	\begin{equation} \label{decompo}
		E|_{S \times \BA^1} = \bigoplus_{i \geq i_0} E_i \Ft^i
	\end{equation}
	for some $E_i \in \Coh(S)$ and $i_0 \in \BZ$. 
	%Since $E$ is finitely generated, there exists $i_0$ such that $E_i = 0$ for $i<i_0$
	Since $E|_{S \times \BA^1}$ is finitely generated,
	%By finite generation 
	there exists $r$ such that
	$E_{i_0+r} = E_{i_0 + r+1} = \ldots =: F$.
	Hence our sheaf takes the form
	\[
	E|_{S \times \BA^1} = E_{i_0} \Ft^{i_0} \oplus E_{i_0+1} \Ft^{i_0+1} \oplus \cdots \oplus E_{i_0 + r-1} \Ft^{i_0 + r-1} \oplus F \Ft^{i_0 + r} \oplus F \Ft^{i_0 + r+1} \oplus F \Ft^{i_0+r+2} \oplus \ldots  \,.
	\]
	%Hence by twisting $E$ by an appropriate power of $\CO_{\p^1}(-1)$ we can normalize $E$ such that $i_0 = 0$.
	%for some $r \geq i$ where $E_i \in \Coh(S)$ and $\Ft$ is the trivial line bundle on $S$ with $\BC^{\ast}$-action of weight $1$.
	By the assumption that $E$ is torsion-free, multiplication by $x$  yields the injective morphisms
	\[ f_i : E_i \hookrightarrow E_{i+1}. \]
	Thus associated to $E$ we have the flag of subsheaves
	\[ E_{\bullet} = \left( E_{i_0} \subseteq E_{i_0+1} \subseteq E_{i_0+2} \subseteq \ldots  \subseteq E_{i_0+r} = F \right). \]
	
	The stabilization parameter $r$ can be chosen uniformly on each connected component of $M_{\mathrm{ext}}$
	(because it only depends on the Chern classes $\ch(E_i)$).
	Since there are only finitely many connected components, we hence may choose an $r$ that is a stabilization parameter
	for all sheaves $E \in M_{\mathrm{ext}}$.
	
	\begin{prop} \label{prop:fixed virtual class}
		(a) The fixed part of the restriction of $T^{\text{vir}}$ to $M_{\mathrm{ext}}$ is given by
		%  There exists a perfect obstruction theory on $M_{\mathrm{ext}}$ with virtual tangent bundle
		\begin{equation} \left( T^{\text{vir}}|_{M_{\mathrm{ext}}} \right)^{\textup{fixed}} 
			\cong \mathrm{Cone}\left( \bigoplus_{i=i_0}^{i_0+r} R \Hom_S(E_i, E_i)_0 \xrightarrow{\delta} \bigoplus_{i=i_0}^{i_0+r-1} R \Hom_S(E_i, E_{i+1})_{0} \right) \label{fix pot} \end{equation}
		where
		\begin{align}
			R \Hom_S(E_i, E_i)_0 & = \Cone( R \Gamma(S, \CO_S) \xrightarrow{\id} R \Hom_S(E_i, E_i) ) \\
			R \Hom_S(E_i, E_{i+1})_0 & = \Cone( R \Gamma(S, \CO_S) \xrightarrow{\id} R \Hom_S(E_i, E_i) \xrightarrow{f_i \circ ( - )} R \Hom_S(E_i, E_{i+1}) ) \label{defn eiej0}
		\end{align}
		and $\delta$ is induced by the map that sends a tuple $(\alpha_i)_i \in \oplus_i R \Hom(E_i, E_i)$ to
		$(\alpha_{i+1} \circ f_i - f_i \circ \alpha_i)_i$ where $f_i : E_i \to E_{i+1}$ is the inclusion map.
		
		(b) The $K$-theory class of the moving part of the restriction of $T^{\text{vir}}$ to $M_{\mathrm{ext}}$ is
		\begin{align*} 
			\left( T^{\text{vir}}|_{M_{\mathrm{ext}}} \right)^{\textup{mov}} 
			& = 
			\sum_{i \geq i_0} \sum_{k \geq 1} 
			\left( \begin{array}{c}
				- \Ft^{-k} \otimes R\Hom_S( E_{i+k} - E_{i+k-1}, E_i) \\
				+ \Ft^{k} \otimes R\Hom_S( E_{i+k+1} - E_{i+k}, E_i)^{\vee}
			\end{array} \right).
		\end{align*}
		%
		%\noindent
		%An alternative formulation is
		%\[ \left( T^{\text{vir}}|_{M_{\mathrm{ext}}} \right)^{\textup{fixed}} 
		%= \mathrm{Cone}\left(  \left( \bigoplus_{i=0}^{r} R \Hom_S(E_i, E_i) \right)_0 \to \bigoplus_{i=0}^{r-1} R \Hom_S(E_i, E_{i+1}) \right). \]
		%\vspace{3pt} \noindent (b) The $K$-theory class of $T^{\text{vir}}_{M_{\mathrm{ext}}}$ is equal to the class of
		%the fixed part of $T^{\text{vir}}|_{M_{\mathrm{ext}}}$.
	\end{prop}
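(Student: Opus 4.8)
The plan is to compute $R\Hom_{S\times\p^1}(\CE,\CE)$ $\BC^{\ast}$-equivariantly by restricting to the standard affine cover $\p^1 = \BA^1\cup\BA^1_{\infty}$ and using the Mayer--Vietoris triangle
\[
R\Hom_{S\times\p^1}(\CE,\CE)\to R\Hom_{S\times\BA^1}(\CE,\CE)\oplus R\Hom_{S\times\BA^1_{\infty}}(\CE,\CE)\to R\Hom_{S\times\BG_m}(\CE,\CE)\xrightarrow{+1}.
\]
All of this is carried out relative to $M_{\mathrm{ext}}$, with $\CE_i$ the universal graded pieces and $\CE_i=\CF$ for $i\ge i_0+r$ (the pullback of the universal sheaf on $M(v)$); cohomology and base change lets us treat every $R\Hom_S(\CE_i,\CE_j)$ as a perfect complex on $M_{\mathrm{ext}}$, and the pointwise descriptions from the text globalize verbatim. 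On $\BA^1_{\infty}$ and on $\BG_m$ one has $\CE=\pi_S^{\ast}\CF$, so the last two terms are $R\Hom_S(\CF,\CF)\otimes\BC[u]$ and $R\Hom_S(\CF,\CF)\otimes\BC[x,x^{-1}]$ with $u,x$ the coordinates at $\infty,0$; since the tangent weight of $\p^1$ at $0$ is $-t$, the variable $x$ has $\BC^{\ast}$-weight $+1$, so these carry weights $\bigoplus_{k\le 0}\Ft^{k}$ and $\bigoplus_{k\in\BZ}\Ft^{k}$ respectively. On $\BA^1$ one has $\CE|_{S\times\BA^1}=M:=\bigoplus_{i\ge i_0}\CE_i\Ft^i$ as a graded $\CO_S[x]$-module, and the substance is to understand $R\Hom_{\CO_S[x]}(M,M)$ weight by weight. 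The triangle is equivariant, so the fixed part in (a) is its weight-$0$ piece and (b) is read off the alternating sum of the nonzero-weight pieces in equivariant $K$-theory.

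For part (a): in weight $0$ the two non-$\BA^1$ terms are both $R\Hom_S(\CF,\CF)$ and the Mayer--Vietoris restriction map identifies the $\BA^1_\infty$-term isomorphically with the $\BG_m$-term, so they cancel and the fixed part equals $R\Hom_{\CO_S[x]}(M,M)^{\mathrm{wt}\,0}$. This is the complex controlling deformations of the flag $\CE_{\bullet}=(\CE_{i_0}\subseteq\cdots\subseteq\CE_{i_0+r}=\CF)$: writing out the totalization one finds
\[
R\Hom_{\CO_S[x]}(M,M)^{\mathrm{wt}\,0}\;\simeq\;\operatorname{fib}\Bigl(\textstyle\bigoplus_{i=i_0}^{i_0+r}R\Hom_S(\CE_i,\CE_i)\ \xrightarrow{\ \delta\ }\ \bigoplus_{i=i_0}^{i_0+r-1}R\Hom_S(\CE_i,\CE_{i+1})\Bigr),\quad \delta(\alpha)_i=\alpha_{i+1}f_i-f_i\alpha_i,
\]
the finitely many summands sufficing because $f_i=\id$ for $i\ge i_0+r$, which forces the a priori infinite product to truncate. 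Splitting off the global trace $R\Gamma(S,\CO_S)$ (the constant diagonal $\alpha_i=c\cdot\id_{\CE_i}$, which lies in $\ker\delta$) and replacing $R\Hom_S(\CE_i,\CE_i)$, $R\Hom_S(\CE_i,\CE_{i+1})$ by the trace-free classes $R\Hom_S(\CE_i,\CE_i)_0$, $R\Hom_S(\CE_i,\CE_{i+1})_0$ of the statement turns this into $\operatorname{fib}(\delta)$ for the induced $\delta$; shifting by $[1]$ as in $T^{\mathrm{vir}}=R\Hom(\CE,\CE)_0[1]$ yields the stated cone.

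For part (b): one works in equivariant $K$-theory. The $\BA^1_{\infty}$- and $\BG_m$-terms contribute $[R\Hom_S(\CF,\CF)]\bigl(\sum_{k\ge 1}\Ft^{-k}-\sum_{k\ne 0}\Ft^{k}\bigr)=-[R\Hom_S(\CF,\CF)]\sum_{k\ge 1}\Ft^{k}$ to the moving part of $R\Hom_{S\times\p^1}(\CE,\CE)$, hence $+[R\Hom_S(\CF,\CF)]\sum_{k\ge1}\Ft^{k}$ to $(T^{\mathrm{vir}})^{\mathrm{mov}}$. For the $\BA^1$-term, apply $R\Hom_{\CO_S[x]}(-,M)$ to the two-term resolution $0\to M\otimes\Ft\xrightarrow{\ x\ }M\to M/xM\to 0$ (exact since $\CE$ is torsion-free, so each $f_i$ is injective) and use derived change of rings along $\CO_S=\CO_S[x]/(x)$; this gives in $K$-theory
\[
[R\Hom_{\CO_S[x]}(M,M)]=\frac{[R\Hom_S(M/xM,M/xM)]}{1-\Ft},\qquad R\Hom_S(M/xM,M/xM)=\textstyle\bigoplus_{i,j}R\Hom_S(\CG_i,\CG_j)\otimes\Ft^{j-i},
\]
where $\CG_i=\operatorname{coker}(f_{i-1}\colon\CE_{i-1}\hookrightarrow\CE_i)$ (so $[\CG_i]=[\CE_i]-[\CE_{i-1}]$, with $\CE_{i_0-1}=0$, and $\CG_i=0$ for $i>i_0+r$). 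Expanding $1/(1-\Ft)=\sum_{a\ge 0}\Ft^{a}$ — the expansion belonging to the chart around the fixed point $0$, pinned down by checking the trivial-flag case $M=\CF\otimes\CO_S[x]$ — and assembling the three contributions, the moving part collapses to a finite sum since the $\CG_i$ vanish for $i>i_0+r$. Rewriting via $[\CE_i]=\sum_{j\le i}[\CG_j]$ and the K3 Serre duality identity $[R\Hom_S(A,B)^{\vee}]=[R\Hom_S(B,A)]$ matches it term for term with the claimed formula, the negative weights producing the $-\Ft^{-k}R\Hom_S(\CE_{i+k}-\CE_{i+k-1},\CE_i)$ summands and the positive weights the $\Ft^{k}R\Hom_S(\CE_{i+k+1}-\CE_{i+k},\CE_i)^{\vee}$ summands. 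I expect the only real difficulty here to be bookkeeping rather than any single idea: keeping $\BC^{\ast}$-weights, module twists, trace/trace-free splittings and the overall $[1]$-shift consistent, and in particular selecting the correct one-sided expansion of $1/(1-\Ft)$ forced by the non-proper charts $\BA^1,\BA^1_{\infty},\BG_m$ — a routine equivariant-localization point, but the one on which the telescoping in (b) to the stated finite expression depends.
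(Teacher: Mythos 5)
Your argument is correct, but it follows a genuinely different route from the paper. The paper never passes to the affine cover: it filters the universal sheaf on $X=S\times\p^1$ by the exact sequences $0\to E^{(j+1)}(-1)\to E^{(j)}\to\iota_{\ast}E_j\to 0$, peeling off the central fiber one graded piece at a time, computes $R\Hom_X(E^{(j)},\iota_{\ast}A)$ via adjunction and the triangle for $L\iota^{\ast}\iota_{\ast}$, uses Serre duality on the threefold $X$ for the reversed Hom's, and gets both the fixed and the moving part from one induction on $j$, finishing with the trace-free reduction as in Gholampour--Thomas. You instead use Mayer--Vietoris for $\BA^1\cup\BA^1_{\infty}$, identify the chart at $0$ with the graded $\CO_S[x]$-module $M=\bigoplus_i E_i\Ft^i$, and compute $R\Hom_{\CO_S[x]}(M,M)$ from the sequence $0\to M\otimes\Ft\xrightarrow{x}M\to M/xM\to 0$ together with change of rings, with the one-sided expansion $1/(1-\Ft)=\sum_{a\geq 0}\Ft^a$ correctly pinned down by the free-module case; Serre duality enters only on $S$ at the end. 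I checked the bookkeeping: in weight $0$ the $\BA^1_{\infty}$- and $\BG_m$-terms cancel and your $\mathrm{fib}(\delta)$, with the infinite products truncated using $f_i=\id$ for $i\geq i_0+r$ and a single copy of $R\Gamma(\CO_S)$ split off, matches the termwise trace-free cone \eqref{fix pot}; in weight $-k$ your coefficient is $\sum_i R\Hom_S(G_{i+k},E_i)$ and in weight $+k$ it is $\sum_{j>i+k}R\Hom_S(G_i,G_j)=\sum_i R\Hom_S(G_{i+k+1},E_i)^{\vee}$ after using $\sum_i[G_i]=[F]$ and triviality of $\omega_S$, which is exactly part (b). What your route buys: the fixed part is immediately the standard flag-deformation complex, and the moving part is a one-line geometric series rather than an induction. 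What the paper's route buys: it only ever manipulates short exact sequences of universal sheaves on $X\times M_{\mathrm{ext}}$, so the family-level issues you dispatch with ``cohomology and base change globalizes verbatim'' (each weight piece of your \v{C}ech triangle must be a perfect complex on $M_{\mathrm{ext}}$, compatibly with the $\BC^{\ast}$-decomposition) are automatic; if you write this up in full, that globalization and the trace-free matching with the $(-)_0$'s of the statement are the two points to spell out.
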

	\begin{rmk}
		Here and in what follows in this section, we will denote the sheaves on the moduli space by its fibers over closed points.
		So $R \Hom_S(E_i, E_i)$ stands for $R \Hom_S(\CE_i, \CE_i)$ where $\CE_i$ is the $i$-th summand in the decomposition of the universal sheaf $\CE|_{M_{\mathrm{ext}} \times S \times \BA^1}$
		under the decomposition \eqref{decompo}. As before we write $R \Hom_S(-,-) = \pi_{\ast} \hom(-, -)$ where $\pi$ is the projection away from $S$.
	\end{rmk}
	\begin{rmk}
		The right hand side of \eqref{fix pot} is the 
		%The perfect-obstruction theory on the right hand sideabove identifies 
		%the fixed perfect obstruction theory with the natural one
		natural perfect obstruction theory
		appearing in the deformation theory of flags of sheaves $E_{\bullet}$.
		Indeed, assuming $i_0=0$ for simplicity and taking the long exact sequence in cohomology yields:
		\begin{multline*}
			\bigoplus_{i=0}^{r-1} \Hom(E_i, E_{i+1}) / \BC f_i \to T_{M_{\mathrm{ext}},[E]}^{\vir}  \xrightarrow{\gamma} \bigoplus_{i=0}^{r} \Ext^1(E_i, E_i) \\
			\xrightarrow{\delta} \bigoplus_{i=0}^{r-1} \Ext^1(E_i, E_{i+1})_0 \to \Obs_{M_{\mathrm{ext}},[E]}^{\vir} \to \ldots  \ .
		\end{multline*}
		%Here we have written $T_{M_{\mathrm{ext}},[E]}^{\vir}, \Obs_{M_{\mathrm{ext}},[E]}^{\vir}$ for the virtual tangent and obstruction sheaf
		%of $M_{\mathrm{ext}}$, i.e. the $h^0,h^1$ of $\left( T^{\text{vir}}|_{M_{\mathrm{ext}}} \right)^{\textup{fixed}}$.
		The first term parametrize deformations of the maps $f_i : E_i \to E_{i+1}$.
		The map $\gamma$ sends a deformation of the flag $E_{\bullet}$ to the deformation of the individual terms $E_i$ in the flag.
		Given a deformation of the individual terms $(\alpha_i) \in \oplus_i \Ext^1(E_i, E_i)$,
		its image under $\delta$ vanishes if and only if the diagram
		\[
		\begin{tikzcd}
			E_i \ar{r}{f_i} \ar{d}{\alpha_i} & E_{i+1} \ar{d}{\alpha_{i+1}} \\
			E_i[1] \ar{r}{f_i} & E_{i+1}[1]
		\end{tikzcd}
		\]
		commutes, hence if and only if the deformations are compatible with $f_i$.
		We refer to \cite{GT1} for a discussion.
	\end{rmk}
	\begin{proof}
		We linearize the line bundle $\CO_{\p^1}(-1)$
		such that it has weight $\Ft$ over $0 \in \p^1$ (and hence weight $0$ over $\infty \in \p^1$).
		%Let $E \in M_{\mathrm{ext}}$. 
		By replacing $E$ by $E(k)$ for appropriate $k$ (and raising $r$ if needed) we can assume that
		\[ E|_{S \times \BA^1} = E_{0} \oplus E_{1} \Ft^{1} \oplus \cdots \oplus E_{r-1} \Ft^{r-1} \otimes F \Ft^{r} \oplus F \Ft^{r+1} \oplus F \Ft^{r+2} \oplus \ldots  \,. \]
		for the restriction of the universal family to $M_{\mathrm{ext}}$.
		
		Let $\iota : S \to X := S \times \p^1$ be the inclusion of the fiber over $0$.
		We argue now similarly to \cite[Prop.3.12]{GSY}.
		The idea is to peel off one factor of $E_i$ at a time. Concretely,
		define a sequence of sheaves $E^{(j)}$ inductively by
		$E^{(0)} := E$ and by the short exact sequence
		\begin{equation} 0 \to E^{(j+1)}(-1) \to E^{(j)} \to \iota_{\ast} E_{j} \to 0. \label{induction} \end{equation}
		Since the flag $E_{\bullet}$ stabilizes at the $r$-th step, we have 
		\[ E^{(r)} = \pi_S^{\ast}(F). \]
		
		\begin{lemma} \label{lemma:E peel 1} For any $A \in D^b(S)$ we have
			\[
			R\Hom_X(E^{(j)}, \iota_{\ast}A)
			=
			R\Hom_S(E_j, A) + \sum_{k \geq 1} \Ft^{-k} \otimes R \Hom_S( E_{j+k} - E_{j+k-1}, A ).
			\]
		\end{lemma}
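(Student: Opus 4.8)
\emph{Approach.} I would prove the formula by descending induction on $j$, starting from the stabilization parameter $j=r$ (where $E^{(r)}=\pi_S^\ast F$) and going down, using the defining short exact sequences \eqref{induction}. Throughout, all identities are read in the $\BC^\ast$-equivariant $K$-theory of $M_{\mathrm{ext}}$: the right hand side only makes sense there, since it involves the classes $E_{j+k}-E_{j+k-1}$ of the inclusions $f_{j+k-1}$ in the (contravariant) first argument of $R\Hom_S$, and the sum over $k$ is finite because $E_{j+k}=E_{j+k-1}$ once $k$ exceeds the stabilization step.

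\emph{Base case.} For $j=r$ the flag has stabilized, so $E^{(r)}=\pi_S^\ast(F)$, $E_r=F$, and $E_{r+k}-E_{r+k-1}=0$ for all $k\geq 1$. Hence the asserted identity reduces to $R\Hom_X(\pi_S^\ast F,\iota_\ast A)=R\Hom_S(F,A)$, which is immediate from the adjunction $L\iota^\ast\dashv R\iota_\ast$ together with $\pi_S\circ\iota=\mathrm{id}_S$, so that $L\iota^\ast\pi_S^\ast F=F$.

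\emph{Inductive step.} Assume the formula for $E^{(j+1)}$. Applying the exact contravariant functor $R\Hom_X(-,\iota_\ast A)$ to \eqref{induction} produces a distinguished triangle, hence in $K$-theory
\[ R\Hom_X(E^{(j)},\iota_\ast A)=R\Hom_X(\iota_\ast E_j,\iota_\ast A)+R\Hom_X(E^{(j+1)}(-1),\iota_\ast A). \]
For the first summand, adjunction gives $R\Hom_X(\iota_\ast E_j,\iota_\ast A)=R\Hom_S(L\iota^\ast\iota_\ast E_j,A)$, and since $S=S\times\{0\}$ is a Cartier divisor in $X=S\times\p^1$ whose conormal bundle carries $\BC^\ast$-weight $\Ft$ (the tangent weight of $\p^1$ at $0$ being $-t$), we have $L\iota^\ast\iota_\ast E_j=E_j\oplus(E_j\otimes\Ft)[1]$, i.e. the class $E_j-E_j\otimes\Ft$; thus this summand equals $R\Hom_S(E_j,A)-\Ft^{-1}\otimes R\Hom_S(E_j,A)$, the sign coming from the shift and the weight inversion from $\Ft$ sitting in the contravariant slot. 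For the second summand, the projection formula together with $\iota^\ast\CO_X(1)=\Ft^{-1}$ (from the linearization of $\CO_{\p^1}(-1)$ fixed in the proof of Proposition~\ref{prop:fixed virtual class}) gives $R\Hom_X(E^{(j+1)}(-1),\iota_\ast A)=\Ft^{-1}\otimes R\Hom_X(E^{(j+1)},\iota_\ast A)$; substituting the inductive hypothesis and reindexing $k\mapsto k+1$ turns this into
\[ \Ft^{-1}\otimes R\Hom_S(E_{j+1},A)+\sum_{k\geq 2}\Ft^{-k}\otimes R\Hom_S(E_{j+k}-E_{j+k-1},A). \]
Adding the two contributions, the term $-\Ft^{-1}\otimes R\Hom_S(E_j,A)$ merges with $\Ft^{-1}\otimes R\Hom_S(E_{j+1},A)$ into the $k=1$ summand $\Ft^{-1}\otimes R\Hom_S(E_{j+1}-E_j,A)$, and the total is precisely the asserted formula for $E^{(j)}$, completing the induction.

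\emph{Main obstacle.} The computation is essentially mechanical; the only point requiring care is the consistent bookkeeping of $\BC^\ast$-weights — the conormal weight $\Ft$ entering $L\iota^\ast\iota_\ast(-)$, the weight $\Ft^{-1}$ picked up from twisting by $\CO_X(1)$, and the sign from the shift $[1]$ — all of which must agree with the linearizations chosen earlier; once these are pinned down the telescoping is forced. The peeling argument itself is parallel to \cite[Prop.3.12]{GSY}.
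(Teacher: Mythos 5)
Your argument is correct and is essentially the paper's proof: both apply $R\Hom_X(-,\iota_{\ast}A)$ to the peeling sequence \eqref{induction}, compute the $\iota_{\ast}E_j$-term via adjunction and the standard triangle for $L\iota^{\ast}\iota_{\ast}$ (with conormal weight $\Ft$), twist out $\CO(-1)$ as $\Ft^{-1}$, and conclude by descending induction from $E^{(r)}=\pi_S^{\ast}F$. You merely make explicit the base case and the telescoping that the paper leaves implicit, and your weight bookkeeping matches the paper's linearization.
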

		\begin{proof}
			Since $E_{\bullet}$ stabilizes the sum on the right hand side has only finitely many non-zero terms, so the claim is well-defined.
			We argue by induction. First apply $R \Hom( -, \iota_{\ast} A)$ to the sequence \eqref{induction},
			then we use adjunction with respect to $\iota$ and the well-known fact (e.g. \cite[Cor.11.4]{Huy}) that for any $B \in D^b(S)$ we have the distinguished triangle
			\[ B(-S_0)[1] \to L \iota^{\ast} \iota_{\ast} B \to B \to B(-S_0)[2]. \]
			This yields:
			\[ R\Hom_X(E^{(j)}, \iota_{\ast}A)
			=
			R \Hom_S(E_j, A) - R \Hom_S(E_j, A) \otimes \Ft^{-1} + \Ft^{-1} \otimes R\Hom_X(E^{(j+1)}, \iota_{\ast}A) \]
			from which the claim follows by induction.
		\end{proof}
		
		\begin{lemma} \label{lemma:E peel 2} For any $A \in D^b(S)$ and $j$ we have
			\[ R \Hom_X( \iota_{\ast} A, E^{(j)}) = ( R \Hom_X( E^{(j)}, \iota_{\ast} A[3]) \otimes \Ft )^{\vee}. \]
		\end{lemma}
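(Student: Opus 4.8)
The plan is to recognize this identity as equivariant Serre duality on the smooth projective threefold $X = S \times \p^1$, carried out in the family over $M_{\mathrm{ext}}$.

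First I would invoke relative Grothendieck--Serre duality for the projection $p \colon M_{\mathrm{ext}} \times X \to M_{\mathrm{ext}}$: this is a smooth projective morphism of relative dimension $3$ whose relative dualizing complex is the pullback of $\omega_X[3]$, where $\omega_X = p_S^{\ast}\omega_S \otimes p_{\p^1}^{\ast}\omega_{\p^1}$. Hence for perfect $\BC^{\ast}$-equivariant complexes $\CF, \CG$ on $M_{\mathrm{ext}} \times X$ one has a natural isomorphism
\[ R\Hom_X(\CF, \CG) \;\cong\; \bigl( R\Hom_X(\CG, \CF \otimes \omega_X[3]) \bigr)^{\vee}, \]
with $(-)^{\vee}$ the derived dual over $M_{\mathrm{ext}}$ and everything taken equivariantly; the (possible) singularities of $M_{\mathrm{ext}}$ play no role, since $\omega_X$ is pulled back from the smooth factor $X$.

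Then I would specialize $\CF = \iota_{\ast}A$ and $\CG = E^{(j)}$ and simplify the twist by $\omega_X$. The projection formula gives $\iota_{\ast}A \otimes \omega_X \cong \iota_{\ast}(A \otimes \iota^{\ast}\omega_X)$, so it remains to identify $\iota^{\ast}\omega_X = \omega_S \otimes \omega_{\p^1}|_{0}$ as an equivariant line on $S$. Since $S$ is a K3 surface, $\omega_S = \CO_S$; and $\omega_{\p^1}|_{0} = (T_0\p^1)^{\vee}$ carries $\BC^{\ast}$-weight $+1$, because the action on $T_0\p^1$ has weight $-t$ (the normalization fixed at the start of this section, consistent with the chosen linearization of $\CO_{\p^1}(-1)$). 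Therefore $\iota^{\ast}\omega_X \cong \Ft \otimes \CO_S$, whence $\iota_{\ast}A \otimes \omega_X \cong \iota_{\ast}A \otimes \Ft$.

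Plugging this into the duality isomorphism yields
\[ R\Hom_X(\iota_{\ast}A, E^{(j)}) \;\cong\; \bigl( R\Hom_X(E^{(j)}, \iota_{\ast}A[3] \otimes \Ft) \bigr)^{\vee} \;=\; \bigl( R\Hom_X(E^{(j)}, \iota_{\ast}A[3]) \otimes \Ft \bigr)^{\vee}, \]
which is the assertion. I do not expect a real obstacle here: the only genuine care needed is (a) checking that equivariant relative Serre duality applies in the present (non-proper base, family) setting --- it does, because $p$ is smooth projective with relative dualizing complex pulled back from $X$ --- and (b) pinning down the equivariant weight of the twist so that it is exactly $\Ft$ and not $\Ft^{\pm 2}$, which is forced by $\omega_S = \CO_S$ together with the weight $-t$ on $T_0\p^1$.
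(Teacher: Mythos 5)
Your proposal is correct and is essentially the paper's own argument: the paper proves this by (equivariant) Serre duality on $X = S \times \p^1$ together with the identification $\omega_X|_{S_0} = \Omega_{\p^1,0} \otimes \CO_S = \Ft \otimes \CO_S$, which is exactly your twist computation via the projection formula and the weight $-t$ on $T_0\p^1$. Your extra care about the relative/family version over $M_{\mathrm{ext}}$ is consistent with the paper's convention of writing fiberwise formulas and does not change the argument.
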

		\begin{proof}
			By Serre duality we have
			\[ R \Hom_X( \iota_{\ast} A, E^{(j)}) = R \Hom_X( E^{(j)}, \iota_{\ast} A \otimes \omega_X[3])^{\vee}
			= (R \Hom_X( E^{(j)}, \iota_{\ast} A[3]) \otimes \Ft)^{\vee}, \]
			where we used that $\omega_X|_{S_0} = \Omega_{\p^1,0} \otimes \CO_S = \Ft \otimes \CO_S$.
		\end{proof}
		
		\begin{lemma} \label{lemma:fix}
			\[ R \Hom_X(E^{(j)}, E^{(j)} )^{\textup{fixed}}[1]
			\cong \mathrm{Cone}\left( \bigoplus_{i=j}^{r} R \Hom_S(E_i, E_i) \xrightarrow{\delta} \bigoplus_{i=j}^{r-1} R \Hom_S(E_i, E_{i+1}) \right). \]
		\end{lemma}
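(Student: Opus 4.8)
The plan is to argue by descending induction on $j$, from $j=r$ down to $j=0$. The base case $j=r$ is immediate: $E^{(r)}=\pi_S^{\ast}(F)$ carries the trivial $\BC^{\ast}$-linearization, and since $R\Gamma(\p^1,\CO_{\p^1})=\BC$ we get $R\Hom_X(E^{(r)},E^{(r)})^{\textup{fixed}}=R\Hom_S(F,F)$, so that $R\Hom_X(E^{(r)},E^{(r)})^{\textup{fixed}}[1]=R\Hom_S(E_r,E_r)[1]=\Cone(R\Hom_S(E_r,E_r)\to 0)$, which is exactly the asserted right-hand side (the target sum over $i=r,\dots,r-1$ being empty).

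For the inductive step, fix $j<r$, assume the claim for $j+1$, and apply $R\Hom_X(-,E^{(j)})$ to the defining sequence \eqref{induction}. This gives a distinguished triangle
\[ R\Hom_X(\iota_{\ast}E_j,E^{(j)})\to R\Hom_X(E^{(j)},E^{(j)})\to R\Hom_X(E^{(j+1)}(-1),E^{(j)})\xrightarrow{+1}. \]
Since $\BC^{\ast}$ is reductive, passing to the weight-zero summand is exact, so it suffices to identify the fixed parts of the two outer terms. For the first term I would feed $R\Hom_X(\iota_{\ast}E_j,E^{(j)})=(R\Hom_X(E^{(j)},\iota_{\ast}E_j)\otimes\Ft)^{\vee}[-3]$ from Lemma~\ref{lemma:E peel 2} into the expansion of $R\Hom_X(E^{(j)},\iota_{\ast}E_j)$ from Lemma~\ref{lemma:E peel 1}, extract the relevant weight piece, and invoke Serre duality on the K3 surface $S$ together with the short exact sequence $0\to E_j\to E_{j+1}\to E_{j+1}/E_j\to 0$; the upshot is
\[ R\Hom_X(\iota_{\ast}E_j,E^{(j)})^{\textup{fixed}}\cong\Cone\!\big(R\Hom_S(E_j,E_j)\xrightarrow{\,f_j\circ(-)\,}R\Hom_S(E_j,E_{j+1})\big)[-1]. \]
For the third term, apply $R\Hom_X(E^{(j+1)}(-1),-)$ to \eqref{induction}: the term $R\Hom_X(E^{(j+1)}(-1),\iota_{\ast}E_j)=\Ft^{-1}\otimes R\Hom_X(E^{(j+1)},\iota_{\ast}E_j)$ has, by Lemma~\ref{lemma:E peel 1}, only $\Ft$-weights $\le-1$, hence vanishing fixed part, so $R\Hom_X(E^{(j+1)}(-1),E^{(j)})^{\textup{fixed}}\cong R\Hom_X(E^{(j+1)},E^{(j+1)})^{\textup{fixed}}$, which by the inductive hypothesis becomes, after the shift $[1]$, the cone of $\delta$ over the range $[j+1,r]$.

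Feeding these back and shifting by $[1]$, I obtain a distinguished triangle whose first vertex is $\Cone(R\Hom_S(E_j,E_j)\xrightarrow{f_j\circ(-)}R\Hom_S(E_j,E_{j+1}))$, whose middle vertex is $R\Hom_X(E^{(j)},E^{(j)})^{\textup{fixed}}[1]$, and whose third vertex is $\Cone(\bigoplus_{i=j+1}^{r}R\Hom_S(E_i,E_i)\xrightarrow{\delta}\bigoplus_{i=j+1}^{r-1}R\Hom_S(E_i,E_{i+1}))$. On the other hand, splitting off the $i=j$ summand $\bigoplus_{i=j}^{r}R\Hom_S(E_i,E_i)=R\Hom_S(E_j,E_j)\oplus\bigoplus_{i=j+1}^{r}R\Hom_S(E_i,E_i)$ and likewise for the target, the differential $\delta$ of the asserted right-hand side is block triangular, with the $(2,1)$-block zero, diagonal blocks $-f_j\circ(-)$ and the map $\delta$ over $[j+1,r]$, and off-diagonal block $\alpha\mapsto\alpha_{j+1}\circ f_j$; the standard triangle attached to the cone of such a morphism then produces a distinguished triangle with exactly the same two outer vertices. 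Matching the connecting morphisms of these two triangles identifies $R\Hom_X(E^{(j)},E^{(j)})^{\textup{fixed}}[1]$ with $\Cone(\bigoplus_{i=j}^{r}R\Hom_S(E_i,E_i)\xrightarrow{\delta}\bigoplus_{i=j}^{r-1}R\Hom_S(E_i,E_{i+1}))$, which completes the induction.

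The hard part is precisely this last comparison of connecting maps: one must check that the off-diagonal component $\alpha\mapsto\alpha_{j+1}\circ f_j$ of $\delta$ coincides with the boundary operator produced by the functoriality of \eqref{induction}, and keep track of signs and shifts along the way. The cleanest way to discharge this — and the route I would actually write up — is to reorganize the whole argument around a \v{C}ech-type computation of $R\Hom_X(E^{(j)},E^{(j)})$ for the cover of $\p^1$ by its two standard $\BC^{\ast}$-invariant charts: over the chart near $\infty$ the sheaf $E^{(j)}$ is $\pi_S^{\ast}F$, over the chart near $0$ it is the graded $\CO_S[x]$-module $\bigoplus_{k\ge0}E_{j+k}\Ft^{k}$ with $x$ acting by the $f_i$, and the $\infty$-chart contribution cancels the overlap contribution on weight-zero parts, so that the fixed part of $R\Hom_X(E^{(j)},E^{(j)})$ is visibly the two-term complex $[\,\bigoplus_i R\Hom_S(E_i,E_i)\xrightarrow{\delta}\bigoplus_i R\Hom_S(E_i,E_{i+1})\,]$ with $\delta$ given by the stated commutator formula. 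Specializing to $j=0$ and removing the trace components on both sides then yields part (a) of Proposition~\ref{prop:fixed virtual class}, while the negative-weight terms discarded in the computation reassemble to part (b).
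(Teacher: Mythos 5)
Your argument is correct and is essentially the paper's: the paper likewise inducts using the peel-off sequence \eqref{induction} together with Lemmas~\ref{lemma:E peel 1} and \ref{lemma:E peel 2}, Serre duality on $S$, and the stabilization $E^{(r)}=\pi_S^{\ast}(F)$, only applying the Hom functors on the other side (first $R\Hom_X(E^{(j)},-)$, then $R\Hom_X(-,E^{(j+1)}(-1))$), so that its inductive triangle reads $R\Hom_S(E_j,E_j)\oplus R\Hom_X(E^{(j+1)},E^{(j+1)})^{\mathrm{fixed}}\to R\Hom_S(E_j,E_{j+1})\to R\Hom_X(E^{(j)},E^{(j)})^{\mathrm{fixed}}[1]$ rather than your transposed version. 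The delicate point you flag --- matching the connecting maps with the components of $\delta$ --- is treated at the same level of detail in the paper, which simply refers to \cite{GSY,GT1,GT2} for the discussion of the maps, so your \v{C}ech-style reorganization is an acceptable (arguably more self-contained) way to close that step.
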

		\begin{proof}
			By applying $R \Hom(E^{(j)}, -)$ to \eqref{induction} we obtain the distinguished triangle
			\[ R\Hom_X(E^{(j)}, \iota_{\ast}E_j) \to R\Hom_X(E^{(j)}, E^{(j+1)}(-1))[1] \to R\Hom_X(E^{(j)},E^{(j)})[1]. \]
			By Lemma~\ref{lemma:E peel 1} we have
			\[ R\Hom_X(E^{(j)}, \iota_{\ast}E_j)^{\textup{fixed}} = R\Hom_S( E_j, E_j ). \]
			For the second term we apply $R \Hom(-, E^{(j+1)}(-1))$ to \eqref{induction} and obtain the 
			distinguished triangle
			\[ R\Hom_X(E^{(j+1)}, E^{(j+1)}) \to R \Hom_X(\iota_{\ast} E_j, E^{(j+1)}(-1))[1] \to R\Hom_X(E^{(j)}, E^{(j+1)}(-1))[1]. \]
			
			By Lemma~\ref{lemma:E peel 2} we have
			\begin{align*}
				R \Hom_X(\iota_{\ast} E_j, E^{(j+1)}(-1))
				& = R \Hom_X(\iota_{\ast} E_j(1), E^{(j+1)}) \\
				& = R \Hom_X(\iota_{\ast} E_j, E^{(j+1)}) \otimes \Ft \\
				& = R \Hom_X( E^{(j+1)}, \iota_{\ast} E_j[3])^{\vee}.
			\end{align*}
			Taking the fixed part, using Lemma~\ref{lemma:E peel 1} and Serre duality on $S$ we get
			\[
			R \Hom_X(\iota_{\ast} E_j, E^{(j+1)}(-1))^{\textup{fixed}}[1] = R \Hom(E_j, E_{j+1}).
			\]
			Taking both statements together we end up with the distinguished triangle:
			\begin{multline*}
				R\Hom_S( E_j, E_j ) \oplus R\Hom_X(E^{(j+1)}, E^{(j+1)})^{\text{fixed}} \\
				\to R \Hom(E_j, E_{j+1}) \to R\Hom_X(E^{(j)},E^{(j)})^{\text{fixed}}[1].
			\end{multline*}
			
			The last piece of information we need is that
			\[ R \Hom_X( E^{(r)}, E^{(r)} ) = %= R \Hom_X( E^{(r+1)}, E^{(r+1)} ) = \ldots = 
			R \Hom_S( F, \pi_{\ast} \pi^{\ast}(F)) = \Hom_S(F,F). \]
			Hence by iterating the above argument, the claim now follows by induction,
			see also \cite{GSY, GT1, GT2} for a discussion of the maps.
		\end{proof}
		
		\begin{lemma} \label{lemma:mov} In $K$-theory we have:
			\[ R \Hom_X(E^{(j)}, E^{(j)} )^{\textup{mov}}[1]
			=
			\sum_{i \geq j} \sum_{k \geq 1} 
			\left( \begin{array}{c}
				- \Ft^{-k} \otimes R\Hom_S( E_{i+k} - E_{i+k-1}, E_i) \\
				+ \Ft^{k} \otimes R\Hom_S( E_{i+k+1} - E_{i+k}, E_i)^{\vee}
			\end{array} \right).
			\]
		\end{lemma}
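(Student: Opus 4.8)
The plan is to prove Lemma~\ref{lemma:mov} in exactly the same way as Lemma~\ref{lemma:fix}, by a downward induction on $j$ starting from $j=r$, but extracting the $\BC^{\ast}$-moving summand of each class rather than the fixed one. Throughout one works in the equivariant $K$-group, so that every distinguished triangle becomes an additive relation; passing to the moving part is additive and commutes with the shift $[1]$, which multiplies a class by $-1$. The base case is $j=r$: here $E^{(r)}=\pi_S^{\ast}(F)$, so $R\Hom_X(E^{(r)},E^{(r)})=\Hom_S(F,F)$ has trivial $\BC^{\ast}$-weight and its moving part vanishes, while the right-hand side of the claim is the empty sum because $E_{r+k}=E_{r+k-1}=F$ for all $k\ge 1$.

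For the inductive step I would reuse the two distinguished triangles already set up in the proof of Lemma~\ref{lemma:fix}. Applying $R\Hom_X(E^{(j)},-)$ and then $R\Hom_X(-,E^{(j+1)}(-1))$ to the defining sequence \eqref{induction} and passing to $K$-theory gives
\[
R\Hom_X(E^{(j)},E^{(j)})
=
R\Hom_X(E^{(j+1)},E^{(j+1)})
+ R\Hom_X(E^{(j)},\iota_{\ast}E_j)
+ R\Hom_X(\iota_{\ast}E_j,E^{(j+1)}(-1)).
\]
After applying $[1]$ and taking moving parts, the first term on the right supplies, by the inductive hypothesis, precisely the part of the claimed sum with $i\ge j+1$. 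The remaining two terms I would evaluate with Lemmas~\ref{lemma:E peel 1} and~\ref{lemma:E peel 2}: Lemma~\ref{lemma:E peel 1} shows directly that the moving part of $R\Hom_X(E^{(j)},\iota_{\ast}E_j)$ is $\sum_{k\ge 1}\Ft^{-k}\otimes R\Hom_S(E_{j+k}-E_{j+k-1},E_j)$; and, exactly as in Lemma~\ref{lemma:fix}, rewriting $R\Hom_X(\iota_{\ast}E_j,E^{(j+1)}(-1))=R\Hom_X(E^{(j+1)},\iota_{\ast}E_j[3])^{\vee}$ via Lemma~\ref{lemma:E peel 2} and then invoking Lemma~\ref{lemma:E peel 1} together with Serre duality on $S$ identifies its moving part with $-\sum_{k\ge 1}\Ft^{k}\otimes R\Hom_S(E_{j+1+k}-E_{j+k},E_j)^{\vee}$. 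Assembling the three contributions, multiplying by $-1$ for the shift, and re-indexing the surviving $i=j$ terms yields the two summands in the statement; all sums are finite because the flag $E_{\bullet}$ stabilizes at step $r$. Taking $j=i_0$ (so $i_0=0$ after the twist fixed in the proof) and using that the trace part $R\Gamma(\CO_S)$ in $R\Hom_X(\CE,\CE)_0$ is fixed then gives Proposition~\ref{prop:fixed virtual class}(b).

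The only real content is sign and equivariant-weight bookkeeping, and I expect the weights to be the main place one can slip. The delicate inputs are the linearization chosen at the start of the proof, under which $\CO_{\p^1}(1)$ restricts over $0\in\p^1$ to the trivial bundle of weight $\Ft^{-1}$ and $\omega_X|_{S_0}=\Ft\otimes\CO_S$ — these are exactly what makes the twist by $\Ft$ in the Serre-duality step cancel, as in Lemma~\ref{lemma:fix} — together with the identity $(\,\cdot\,[3])^{\vee}=(\,\cdot\,)^{\vee}[-3]=-(\,\cdot\,)^{\vee}$ in $K$-theory, which converts the $\Ft^{-k}$ weights produced by Lemma~\ref{lemma:E peel 1} into the $\Ft^{k}$ weights of the dual appearing in the statement. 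Once these are pinned down the induction closes mechanically, and for the description of the maps underlying the $K$-theoretic identities I would simply refer to \cite{GSY,GT1,GT2}, as in the proof of Lemma~\ref{lemma:fix}.
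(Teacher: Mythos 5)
Your proposal is correct and is exactly the paper's argument: the paper's proof of this lemma is literally ``by the same argument as in Lemma~\ref{lemma:fix} but now taking the moving part,'' and you carry out that induction with the right sign and weight bookkeeping (the $K$-theoretic identity from the two triangles, the moving parts via Lemmas~\ref{lemma:E peel 1} and~\ref{lemma:E peel 2}, and the sign from $[3]$ and the shift).
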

		\begin{proof}
			By the same argument as in Lemma~\ref{lemma:fix} but now taking the moving part.
		\end{proof}
		
		We complete the proof of Proposition~\ref{prop:fixed virtual class}.
		The first part follows from Lemma~\ref{lemma:fix},
		taking the tracefree part and arguing as in \cite[Proof Thm.7.1]{GT1}.
		The second part follows directly from Lemma~\ref{lemma:mov} by taking $j=0$ (since the trance part $R \Gamma(X,\CO_X) = R \Gamma(S, \CO_S)$ is $\BC^{\ast}$-fixed).
		%%where the maps are explained in \cite{GSY} (see also \cite{GT1, GT2})
	\end{proof}

	\subsection{Second cosection}
	
	\begin{prop} \label{prop:double cosection}
		%Assume that $\rk(v) = 1$. 
		Let $N \subset M_{\mathrm{ext}}$ be a connected component and let
		$s = | \{ i : E_i \neq E_{i+1} \} |$
		be the number of non-trivial steps in the flag $E_{\bullet}$.
		If $s \geq 2$, then
		the virtual class of the fixed perfect-obstruction theory on $N$ vanishes:
		\[ [N]^{\vir} = 0. \]
	\end{prop}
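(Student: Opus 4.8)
The plan is to run the second-cosection argument of Pandharipande--Thomas \cite{PT}. On a component $N$ with $s\ge 2$ I will construct a cosection of the reduced fixed obstruction sheaf which is surjective at every point of $N$. By the cosection-localized virtual class of Kiem--Li \cite{KL}, a perfect obstruction theory whose obstruction sheaf carries a nowhere-vanishing cosection has vanishing virtual class --- the localized class is supported on the (empty) degeneracy locus --- so this gives $[N]^{\vir}=0$.

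Recall that $[N]^{\vir}$ is the reduced virtual class attached to the fixed obstruction theory; its obstruction sheaf is the kernel $\ker(\sigma)$ of a cosection $\sigma$ on the standard fixed obstruction sheaf $\Obs^{\mathrm{fix}}:=h^1\big((T^{\mathrm{vir}}|_{M_{\mathrm{ext}}})^{\mathrm{fixed}}\big)$ of Proposition~\ref{prop:fixed virtual class} (described by \eqref{fix pot}), where $\sigma$ is induced via the trace by the holomorphic symplectic form of $S$ --- it is the restriction to $N$ of the cosection used to reduce the obstruction theory of $M_{v,w}(S\times\p^1/S_\infty)$. It therefore suffices to produce a second cosection $\tau\colon\Obs^{\mathrm{fix}}\to\CO_N$ which is everywhere surjective and everywhere linearly independent from $\sigma$; then $\tau|_{\ker(\sigma)}$ is the desired nowhere-vanishing cosection of the reduced fixed obstruction theory.

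The cosections come one per \emph{genuine} jump $E_i\subsetneq E_{i+1}$ of the flag $E_\bullet$. With $Q_i=E_{i+1}/E_i$, I would define $\tau_i$ as the flag analogue of the Quot-scheme cosection of Section~\ref{subsec:quot k3}: compose the natural map $\Obs^{\mathrm{fix}}\to\Ext^1_S(E_i,Q_i)$ coming from \eqref{fix pot} with $\Ext^1_S(E_i,Q_i)\to\Ext^2_S(Q_i,Q_i)\xrightarrow{\mathrm{tr}}H^2(S,\CO_S)=\BC$, the trace being surjective because it is Serre dual to the unit $H^0(S,\CO_S)\hookrightarrow\Hom_S(Q_i,Q_i)$. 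Equivalently, in the notation of \eqref{defn eiej0}, $\tau_i$ is the projection onto the extra $H^2(S,\CO_S)$-summand that appears in $\Ext^1_S(E_i,E_{i+1})_0$ exactly when $\Hom_S(E_{i+1},E_i)=0$; this vanishing holds at a genuine jump because $\CE$ is generically stable, while for a trivial step $E_i=E_{i+1}$ the term $R\Hom_S(E_i,E_{i+1})_0$ coincides with $R\Hom_S(E_i,E_i)_0$ and has no such summand, so trivial steps contribute nothing. A short check with the definition of $\delta$ in \eqref{fix pot} shows $\tau_i$ annihilates the image of $\delta$, hence descends to $\Obs^{\mathrm{fix}}$, and it is surjective at every point since the trace already is.

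The hard part, I expect, is to identify the reducing cosection as $\sigma=\sum_i\tau_i$, the sum over the genuine jumps (up to nonzero scalars and signs). This amounts to tracking the symplectic-form cosection of $M_{v,w}(S\times\p^1/S_\infty)$ through the inductive peeling $E^{(0)}\rightsquigarrow E^{(1)}(-1)\rightsquigarrow\cdots$ of \eqref{induction} and Lemmas~\ref{lemma:E peel 1}--\ref{lemma:fix}: the trace factors through $H^2(\CO_{S\times\p^1})=H^2(S,\CO_S)$, and additivity of the Chern character decomposes it into per-step contributions which are $\tau_i$ at a genuine jump and $0$ at a trivial step. Granting this, the conclusion is linear algebra: if $s\ge 2$, then $\tau_1-\tau_2$ and $\sigma=\tau_1+\cdots+\tau_s$ are everywhere linearly independent functionals on $\Obs^{\mathrm{fix}}$, so $(\tau_1-\tau_2)|_{\ker(\sigma)}$ is surjective at every point of $N$ (take $x$ in the first jump summand with $\tau_1(x)=1$ and $y$ in the second with $\tau_2(y)=1$; then $x-y\in\ker(\sigma)$ and $(\tau_1-\tau_2)(x-y)=2$). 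Applying \cite{KL} to this cosection then yields $[N]^{\vir}=0$.
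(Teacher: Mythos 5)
Your argument is essentially the paper's rank~$1$ argument (one trace cosection per jump, obtained from the extra $H^2(S,\CO_S)$-summand in $\Ext^1(E_i,E_{i+1})_0$), but the proposition is needed, and is proved in the paper, for $v$ of arbitrary positive rank, and there your key surjectivity claim fails. The surjectivity of the per-jump map $\tau_i$ (the map $h_i$ in \eqref{upps}) is equivalent to the vanishing of $H^2(S,\CO_S)\to\Ext^2_S(E_i,E_{i+1})\cong\Hom_S(E_{i+1},E_i)^{\vee}$, i.e.\ to $\mathrm{tr}(g\circ f_i)=0$ for all $g\in\Hom_S(E_{i+1},E_i)$. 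In rank $1$ this holds because every $E_i$ is a twisted ideal sheaf, hence stable, so $\Hom_S(E_{i+1},E_i)=0$ at a genuine jump. In higher rank, generic stability of $\CE$ only controls $F=E_{i_0+r}$, not the intermediate $E_i$: these need not be semistable, and $\Hom_S(E_{i+1},E_i)$ can be nonzero with nontrivial trace pairing against $f_i$ (e.g.\ $E_{i+1}=L_1\oplus L_2\subset F$ and $E_i=L_1\oplus L_2(-D)$, where projection to $L_1$ gives such a $g$). So the cosections $\tau_i$ you build are not surjective in general, and the justification ``because $\CE$ is generically stable'' does not give what you need. This is exactly the obstruction the paper points out before abandoning the naive approach in higher rank.

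The paper's proof instead handles the general case by a different mechanism: the canonical injection $E\hookrightarrow\pi_S^{\ast}(F)$ exhibits $E$ as a pair $[\pi_S^{\ast}(F)\to Q]$ with $Q=\bigoplus_i (F/E_i)\,\Ft^i$ supported on the $r$-fold thickening of $S_0$, and one applies the explicit first-order deformation (vector field) construction of \cite[Sec.~5.4--5.5]{PT} to $Q$, producing a deformation of weight $-1$; by the argument of \cite[Prop.~12]{PT} this vector field is linearly dependent on the translational one (which is the one removed by the reduction) only when $Q$ is uniformly thickened, i.e.\ when the flag has at most one genuine step. Serre duality then converts the extra independent vector field into a second cosection whenever $s\ge 2$, giving $[N]^{\vir}=0$. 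A secondary remark: even in rank $1$ you do not need to identify the reducing cosection with $\sum_i\tau_i$ (the step you flag as the hard part); it suffices, as in the paper, that the fixed obstruction sheaf surjects onto $\CO^{\oplus s}$ while the reduction removes only a single trivial summand, leaving a surjective cosection of the reduced obstruction sheaf when $s\ge 2$. As written, however, your proof covers only the rank~$1$ case and has a genuine gap in the generality in which the proposition is used (Theorems~\ref{thm:wallcrossing higher rank} and \ref{thm:wc term}).
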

	\begin{proof}
		The strategy is to construct a second, linearly independent cosection of the fixed obstruction theory of the component $N$.
		Since there is then a second trivial piece in the obstruction theory, the virtual class vanishes: $[N]^{\text{vir}} = 0$.
		This idea was pioneered in the proof of the Katz-Klemm-Vafa conjecture by Pandharipande and Thomas in \cite{PT}.
		It can be viewed as the source of all multiple cover behaviour on the sheaf side.
		
		In rank $1$ the existence of a cosection can be seen quite easily.
		Indeed, if $\rk(v)=1$, we can assume that $v$ is the Mukai vector of the ideal sheaf of some length zero subscheme,
		so that $E_r \subset \CO_S$ is an ideal sheaf, and hence all $E_i$ are ideal sheaves of some curves.
		By definition (see \eqref{defn eiej0}) we have the exact sequence
		\begin{equation} \label{upps}
			\begin{gathered}
				%\Hom(E_i, E_{i+1})/ \BC f_i \to 
				0 \to \Ext^1(E_i, E_{i+1}) \to \Ext^1(E_i, E_{i+1})_{0} \\
				\xrightarrow{h_i} H^2(S,\CO_S) \to \Ext^2(E_i, E_{i+1}) \to \Ext^2(E_i, E_{i+1})_{0} \to 0.
			\end{gathered}
		\end{equation}
		%Since $E_i \subset E_r$, all $E_i$ are ideal sheaves. 
		Since the $E_i$ are stable one has that
		\[ \Ext^2(E_i, E_{i+1}) = \Hom(E_{i+1}, E_i)^{\vee}
		=
		\begin{cases}
			0 & \text{ if } E_i \neq E_{i+1} \\
			\BC & \text{ if } E_i = E_{i+1}.
		\end{cases}.\]
		If $E_i \neq E_{i+1}$ we hence find that $h_i$ is surjective.
		Consider the diagram:
		\[
		\begin{tikzcd}
			\bigoplus_{i=0}^{r} R \Hom_S(E_i, E_i)_0 \ar{r}{\delta} & \bigoplus_{i=0}^{r-1} R \Hom_S(E_i, E_{i+1})_{0} \ar{r} \ar{d}{h} &
			h^1( T_{N}^{\text{vir}} ) \ar{r} \ar[dotted]{dl}{ \exists}& 0 \\
			& \CO^{\oplus s} & .
		\end{tikzcd}
		\]
		where $h = \oplus_{i: E_i \neq E_{i+1}} (h_i \circ \mathrm{pr}_i)$ and $\mathrm{pr}_i$ is the projection to the $i$-th summand.
		By the previous argument $h$ is surjective.
		By the claim below, the map above factors through a surjection
		$h^1( T_{N}^{\text{vir}} ) \to \CO^s$.
		We see that the obstruction sheaf has $s$ trivial summands.
		The reduced obstruction theory removes one of these summands.
		Hence if $s \geq 2$, there is a positive number of trivial summands and the virtual class vanishes.
		
		\vspace{3pt}
		\noindent
		\textbf{Claim.} The composition $h \circ \delta$ vanishes.
		
		\begin{proof}[Proof of Claim]
			Given $(\alpha_i) \in \oplus_i \Ext^1(E_i, E_i)$ we need to show that $h( \alpha_{i+1} \circ f_i - f_i \circ \alpha_i )$
			vanishes.
			By \eqref{defn eiej0},
			$f_i \circ \alpha$ lies in the image of the map
			$\Ext^1(E_i, E_{i+1}) \to \Ext^1(E_i, E_{i+1})_{0}$,
			which using \eqref{upps} shows that $h_i(f_i \circ \alpha) = 0$.
			Similarly, given $\alpha_{i+1} \in \Ext^1(E_{i+1}, E_{i+1})$ the composition $\alpha_{i+1} \circ f_i$ lies in $\Ext^1(E_i, E_{i+1})$.
		\end{proof}
		
		In higher rank the above naive approach does not work since
		the individual sheaves $E_i$ can behave quite badly: they do not have to be semi-stable, and there can be maps $E_{i+1} \to E_i$.
		Instead, we will imitate the arguments of \cite{PT}. If $E \in M_{\mathrm{ext}}$, there exists
		a canonical injection $E \hookrightarrow \pi_S^{\ast}(F)$. Let $Q$ be the cokernel. We can hence view $E$ as the 'stable pair':
		\[ E = [ \pi_S^{\ast}(F) \xrightarrow{\varphi} Q ]. \]
		The cokernel $Q$ is supported over $S \times \Spec( \BC[x]/(x^r) )$ and is identified there with
		\[
		Q = F/E_{0} \oplus F/E_{1} \Ft^{1} \oplus \cdots \oplus F/E_{r-1} \Ft^{r-1} \,.
		\]
		Then applying the construction of \cite[Sec.5.4]{PT} to $Q$
		(and deforming the section $\varphi$ as in Sec.5.5 of \emph{loc.cit.})
		yields an explicit first-order deformation of $Q$ of weight\footnote{In \cite{PT} the vector field has weight $1$.
			We have the sign difference to our case because we let out torus act with tangent weight $-1$ at $0 \in \BA^1$ whereas in \cite{PT} it is with weight $1$.}
		$-1$.
		Arguing as in \cite[Prop.12]{PT} shows that this vector field is linearly independent from the translational shift (that gives the reduced obstruction theory),
		if and only if $Q$ is uniformly $r$-times thickened,
		hence if and only if $E_0 = \ldots = E_{r-1}$, hence only if there is at most $1$ step.
		By Serre duality we hence obtain the second independent cosection whenever $s \geq 2$.
	\end{proof}

	\subsection{The contributions from the single-step locus}
	\label{subsec:single step}
	We consider the components $M^{1\text{-step}}_r \subset M_{\mathrm{ext}}$ with a single step
	and a uniform thickened sheaf of length $r$,
	i.e. the component parametrizing sheaves $E$ of the form
	\begin{equation} \label{KF}
		E|_{S \times \BA^1} = K\Ft^{i_0} \oplus K \Ft^{i_0+1} \oplus \cdots \oplus K \Ft^{i_0 + r-1} \oplus F \Ft^{i_0 + r} \oplus F \Ft^{i_0 + r+1} \oplus \ldots  \,.
	\end{equation}
	We have the (non-equivariant) exact sequence
	\[ 0 \to \pi_S^{\ast}(F)\otimes \CO_{\p^1}(-r - i_0) \to E|_{S \times \BA^1} \to \iota_{r \ast} \pi_S^{\ast}(K) \to 0, \]
	where $\iota_r : S \times \Spec(k[x]/x^r) \to S \times \p^1$ is the inclusion.
	%and $\pi_S$ is the projection to $S$.
	We obtain that 
	\[ w = r v(K) - (r + i_0) v. \]
	By taking the pairing with $v(y)$ and using $w \cdot v(y) = 0$ (see \eqref{dersdf}) we find that
	$0 = r v(K) \cdot v(y) - (r + i_0)$, and hence that
	$i_0 = s_r \cdot r$ for some $s_r \in \BZ$.
	This shows that $w$ is divisibile by $r$ as well.
%	
%	
%	After wedging with $v$ we find that $r | \div(v \wedge w)$
%	and by our assumption that $\div(v \wedge w) = \div(w)$ therefore $r | w$.
%	Inserting in the equation above, we get $i_0 = s_r \cdot r$ for some $s_r \in \BZ$. 
	We conclude that:
	\[ v(F) = v, \quad v(F/K) = -\frac{w}{r} - s_r v =: u \]
	Moreover, since $F/K$ is a quotient of the torsion-free $F$ it has to be of rank in the interval $[0, \rk(v)-1]$.
	We see that $s_r$ is the unique integer such that
	%\[ 0 \leq  -\frac{\rk(w)}{r} - s_r \rk(v) \leq \rk(v)-1. \]
\begin{equation} -\frac{\rk(w)}{r} - s_r \rk(v) \in [0, \rk(v)-1]. \label{dfsd0-333} \end{equation}
	
	The inclusion $K \subset F$ defines an element in the Quot scheme of $F$
	with quotients of Mukai vector $u$.
	Let $\CF \in \Coh( M(v) \times S )$ be the universal sheaf over the moduli space.
	We conclude that the component is the relative Quot scheme:
	\[ M^{1\text{-step}}_r = \Quot( \CF / M(v), u ). \]
	
	By Proposition~\ref{prop:fixed virtual class} we have the virtual normal bundle
	\begin{align*}
		N^{\text{vir}}
		& =
		- R \Hom_S(K, F-K)^{\vee} \otimes (\Ft^{-1} + \ldots + \Ft^{-r})\\
		& \phantom{=} + R \Hom_S(K, F-K) \otimes (\Ft + \ldots + \Ft^{r-1})
	\end{align*}
	Moreover, we have
	\begin{align*}
		\rk \Hom_S(K,F-K) 
		& = -v(K) \cdot v(F/K) \\
		& = v(F/K)^2 - v(F) \cdot v(F/K) \\
		& = u^2 - u \cdot v \\
		& \equiv u \cdot v \quad (\text{modulo } 2) \\
		& \equiv \frac{w \cdot v}{r} \quad (\text{modulo } 2).
	\end{align*}
	We find that the contribution of 
	$M^{1\text{-step}}_r$ to the
	virtual class of $M_{v,w}(S \times \p^1/S_{\infty})$
	in the localization formula is given by:
	\begin{equation} \label{EXTREMECONTRIBUTION}
		\begin{aligned}
			& \frac{1}{e_{\BC^{\ast}}(N^{\vir})} \cdot [ M^{1\text{-step}}_r ]^{\text{vir}} \\
			& =
			\frac{e_{\BC^{\ast}}\left(R \Hom(K, F-K)^{\vee} \otimes (\Ft^{-1} + \ldots + \Ft^{-(r-1)})\right)}{ e_{\BC^{\ast}}\left( \left( R \Hom(K, F-K) \otimes (\Ft + \ldots + \Ft^{r-1}) \right) \right) } \cdot \\
			& \quad \quad \quad \quad 
			\cdot e_{\BC^{\ast}}\left( R \Hom_S(K, F-K)^{\vee} \otimes \Ft^{-r} \right) \cdot
			[ \Quot( \CF/M(v), u ) ]^{\text{vir}} \\
			& =
			(-1)^{(r-1) \cdot \frac{w \cdot v}{r}}
			e_{\BC^{\ast}}\left( R \Hom_S(K, F-K)^{\vee} \otimes \Ft^{-r} \right) \cdot
			[ \Quot( \CF/M(v), u ) ]^{\text{vir}} \\
			& =
			(-1)^{(r-1) \cdot \frac{w \cdot v}{r}}
			\sum_{k \in \BZ} (-rt)^{-k} c_{k+u \cdot (u-v)}( R \Hom_S(K, F-K)^{\vee} ) \cdot [ \Quot( \CF/M(v), u ) ]^{\text{vir}}
		\end{aligned}
	\end{equation}
	where we used that $e_{\BC^{\ast}}(V) = (-1)^{\rk(V)} e_{\BC^{\ast}}(V^{\vee})$.

	\subsection{Contributions from mixed and pure-rubber components}
	\label{subsec:mixed pure rubber}
	Aside from the extremal locus $M_{\mathrm{ext}}$, there are two other types of components in the fixed locus
	$M_{v,w}(S \times \p^1/S_{\infty})^{\BC^{\ast}}$:
	\begin{itemize}
		\item[(i)] Components $W$ of the fixed locus which parametrize sheaves supported on an expanded degeneration of $S \times \p^1$,
		but whose restriction to $S \times \p^1|_{\BA^1}$ is \emph{not} the pullback of a sheaf from $S$. We call these components $W$ of {\em mixed type}.
		These components are given as a fiber product over $M(v)$ of an extremal component $M_{\mathrm{ext}}'$ (for different Chern characters) and a rubber component $M_{v',w'}(S \times \p^1 / S_{0,\infty})^{\sim}$.
		The perfect obstruction theory decomposes according to this decomposition and one observes that each summand admits a surjective cosection.
		Since the reduced class is formed by removing one trivial $\CO$-summand,
		one trivial summand remains and the fixed virtual class vanishes.
		Hence the contribution from this component vanishes.
		\item[(ii)] The second type of component parametrizes sheaves on an expanded degeneration but whose restriction to $S \times \p^1$ is pulled back from $S$.
		We call it of {\em pure rubber type}. It is given by the rubber space
		\[ W = M_{v,w}(S \times \p^1 / S_{0,\infty})^{\sim}. \]
		Because of the $\BC^{\ast}$-scaling action, the {\em reduced} virtual class of the fixed obstruction theory is of dimension $v^2 + 2 = 2n$. 
		The virtual normal bundle is $N^{\text{vir}} = T_{\p^1, \infty} \otimes \CL_{0}^{\vee}$
		where $\CL_{0} \to W$ is the relative cotangent line bundle over the rubber space (of the marking glued to $\p^1$).
		Hence one finds the contribution
		\begin{equation}
		\label{123purerubber}
		\frac{1}{e_{\BC^{\ast}}(N^{\text{vir}})} [ W ]^{\text{vir}}
		=
		\frac{1}{t - c_1(\CL_0)} [ M_{v,w}(S \times \p^1 / S_{0,\infty})^{\sim} ]^{\text{vir}}.
		\end{equation}
		Since the reduced virtual dimension of $[W]^{\text{vir}}$ (which is $2n$) is strictly smaller than that of $M_{v,w}(S \times \p^1/S_{\infty})^{\BC^{\ast}}$ (which is $2n+1$), we will see below that
		the contribution from $[W]^{\text{vir}}$ to suitable integrals over the full moduli space will vanish.
	\end{itemize}

\subsection{Quasimap wall-crossing}
We turn to the proof of the quasimap wall-crossing (Theorem \ref{thm:wallcrossing higher rank}):

Consider the pure rubber component
$M_{v,w}(S \times \p^1 / S_{0,\infty})^{\sim}$
and the evaluation map
\[ \ev_{\infty} : M_{v,w}(S \times \p^1 / S_{0,\infty})^{\sim}
\to M(v) \]
given by intersection with the fiber over $\infty \in \p^1$.
Let $\mathrm{pt} \in H^{4n}(M(v))$ be the class of a point and consider the integral:
\[
\langle \mathrm{pt}, 1 \rangle^{S \times \p^1 / S_{0,\infty}}_{v,w}
:=
\int_{[ M_{v,w}(S \times \p^1 / S_{0,\infty})^{\sim} ]^{\vir}}
\ev_{\infty}^{\ast}(\mathrm{pt}).
\]

We have the following wall-crossing formula proven by Nesterov:
\begin{thm}[{\cite[Theorem 3.5]{N2}}] \label{thm:Nesterov}
		\[
		\DT^{S \times E}_{(v,w)}
		=
		\GW^{M(v)}_{E, w'}
		+
		\chi(S^{[n]})
\langle \mathrm{pt}, 1 \rangle^{S \times \p^1 / S_{0,\infty}}_{v,w}
		\]
		where $w' = - \langle w, - \rangle : v^{\perp} \to \BZ$ is the homology class induced by $w$,
	\end{thm}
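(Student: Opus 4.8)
The plan is to deduce this from Nesterov's quasimap wall-crossing, realising both sides as the two ends of a single $\epsilon$-family of moduli spaces. Fix the elliptic curve $E$ once and for all (equivalently, work over a chosen point of $\Mbar_{1,1}$) and consider, for $\epsilon \in (0,\infty]$, the moduli space $Q^{\epsilon}$ of $\epsilon$-stable quasimaps from $E$ to the rigidified sheaf stack $\Coh_{\mathfrak{r}}(v)$ of the prescribed degree, each with its reduced perfect obstruction theory. For $\epsilon = \infty$ there are no base points, so $Q^{\infty} = \Mbar_E(M(v), w')$ and $\int_{[Q^{\infty}]^{\vir}} 1 = \GW^{M(v)}_{E, w'}$. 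For $\epsilon = 0^{+}$, Nesterov's sheaf-theoretic identification \cite{N1} gives $Q^{0^{+}} \cong M_{v,w}(S\times E)/E$, so that $\int_{[Q^{0^{+}}]^{\vir}} 1 = \DT^{S\times E}_{(v,w)}$. Here $w' = -\langle w, -\rangle$ by Lemma~\ref{lemma:quasi degree}, and the passage to reduced classes throughout is forced by the K3-type geometry exactly as in the cap analysis of Section~\ref{section:cap}.

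The comparison of the two ends is by a master space construction. Between $\epsilon = \infty$ and $\epsilon = 0^{+}$ the spaces $Q^{\epsilon}$ change only across finitely many walls, and at each such wall one builds a proper Deligne--Mumford stack $\widetilde M$ carrying a $\BC^{\ast}$-action and a reduced virtual class whose $\BC^{\ast}$-fixed loci are the two adjacent chambers together with ``mixed'' loci: a quasimap on a main component carrying the gluing node, attached to a component over $\p^1$ which absorbs the extra base-point content, i.e.\ a sheaf on the rubber target $S\times\p^1/S_{0,\infty}$. Applying virtual $\BC^{\ast}$-localization to $[\widetilde M]^{\vir}$ and taking the appropriate residue produces, after accounting for all walls --- with only one wall contributing, the deeper ones washing out by the $\epsilon$-calculus of \cite{ObShen, OPT} since we integrate the class $1$ --- the relation
\[
\DT^{S\times E}_{(v,w)} - \GW^{M(v)}_{E,w'} = (\text{mixed contribution}),
\]
where on the surviving mixed locus the \emph{entire} class $(v,w)$ is carried by the rubber and the main component has degree $0$.

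It then remains to evaluate this mixed contribution. The main component is the constant-map locus $\Mbar_E(M(v), 0) \cong M(v)$; in the splitting of the virtual class of $\widetilde M$ it carries the \emph{standard} virtual class $e\big(H^1(E,\CO_E)\otimes T_{M(v)}\big)\cap[M(v)] = \chi(M(v))\,[\mathrm{pt}]$, while the single reduced direction is retained by the rubber factor $M_{v,w}(S\times\p^1/S_{0,\infty})^{\sim}$ of Section~\ref{subsec:mixed pure rubber} with its reduced virtual class. The node-matching identifies the point class produced on the main side with $\ev_{\infty}^{\ast}(\mathrm{pt})$ on the rubber, and the residue of the rubber's $\BC^{\ast}$-normal bundle $\tfrac{1}{t - c_1(\CL_0)}$ is absorbed in the bookkeeping, so the mixed contribution equals $\chi(M(v))\cdot \langle \mathrm{pt}, 1\rangle^{S\times\p^1/S_{0,\infty}}_{v,w}$. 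Since $M(v)$ is deformation equivalent to $S^{[n]}$ for $n$ with $v\cdot v = 2n-2$, the factor $\int_{M(v)} c_{2n}(T_{M(v)}) = \chi(M(v))$ equals $\chi(S^{[n]})$, which gives the stated formula.

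The hard part is everything structural behind this outline: constructing the reduced virtual class on the master space and verifying that it restricts to the reduced classes on the two chambers and to the advertised (standard main) $\times$ (reduced rubber) class on each mixed locus --- in particular that the cosection is compatible with the main/rubber splitting --- together with controlling the chamber structure and Nesterov's identification of the $\epsilon = 0^{+}$ theory with torsion-free sheaves on $S\times E$. These are precisely the inputs of \cite{N1, N2}, which I would quote; the work specific to the present statement is the identification of the surviving mixed fixed locus as a fibre product over $M(v)$ of a degree-$0$ genus-one stable-map locus with the rubber space, and the Euler-class evaluation on the main component giving the prefactor $\chi(S^{[n]})$.
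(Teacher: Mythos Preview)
The paper does not give its own proof of this statement: Theorem~\ref{thm:Nesterov} is simply quoted as \cite[Theorem 3.5]{N2}, introduced by ``We have the following wall-crossing formula proven by Nesterov,'' and then used as a black box in the subsequent proof of Theorem~\ref{thm:wc term}. So there is no proof in the paper to compare your proposal against.

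What you have written is a sketch of Nesterov's argument in \cite{N1,N2}, and you say as much in your final paragraph. As an outline of that argument it is broadly accurate: the $\epsilon$-wall-crossing between quasimaps and sheaves, the master-space localization, the identification of the single surviving wall contribution with a degree-zero main component fibred over the rubber, and the evaluation $\chi(M(v)) = \chi(S^{[n]})$ via deformation equivalence are the right ingredients. But you should be clear that none of this is your contribution or the paper's --- the substantive work (construction of the reduced class on the master space, compatibility of cosections with the splitting, the $\epsilon$-calculus that kills all but one wall) is entirely in \cite{N1,N2}. For the purposes of the present paper the correct ``proof'' is a one-line citation, which is exactly what the paper does.
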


We prove here the following evaluation of the wall-crossing term:
%have the following evaluation:
\begin{thm} \label{thm:wc term} For any $F \in M(v)$ we have
\[ \langle \mathrm{pt}, 1 \rangle^{S \times \p^1 / S_{0,\infty}}_{v,w}
=
- \sum_{r | w}
		\frac{ (-1)^{w \cdot v} }{r}
		e^{\textup{vir}}(\Quot(F,u_r))
		\]
		where $u_r= -w/r - s_r v$ for the unique $s_r \in \BZ$ such that $0 \leq \rk(u_r) \leq \rk(v)-1$
\end{thm}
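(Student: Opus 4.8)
The plan is to derive the formula from the virtual localization formula applied to a single integral over the cap $M_{v,w}(S\times\p^1/S_\infty)$. Let $\ev_\infty : M_{v,w}(S\times\p^1/S_\infty)\to M(v)$ be the morphism given by restriction to the relative divisor $S_\infty$ (well defined since this restriction is stable), and consider the $\BC^\ast$-equivariant integral
\[
I := \int_{[M_{v,w}(S\times\p^1/S_\infty)]^{\vir}} \ev_\infty^{\ast}(\mathrm{pt}).
\]
The reduced virtual dimension of the cap is $2n+1$, while $\mathrm{pt}\in H^{4n}(M(v))$ has complex codimension $2n=\dim M(v)$; thus $\ev_\infty^{\ast}(\mathrm{pt})$ lies two real degrees below the pairing range of the virtual class, so $I=0$ in $H^{\ast}_{\BC^\ast}(\mathrm{pt})=\BQ[t]$. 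I would first record this vanishing, and then compute $I$ by the localization formula for the scaling $\BC^\ast$-action on $\p^1$ and extract the coefficient of $t^{-1}$.

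The $\BC^\ast$-fixed loci have been identified in Sections~\ref{subsec:single step}--\ref{subsec:mixed pure rubber}: the single-step extremal loci $M^{1\text{-step}}_r = \Quot(\CF/M(v),u_r)$, indexed by $r\mid w$; the multi-step extremal loci; the mixed-type loci; and the pure-rubber component $W = M_{v,w}(S\times\p^1/S_{0,\infty})^{\sim}$. By Proposition~\ref{prop:double cosection} together with the discussion of mixed-type components, the multi-step and mixed-type loci carry vanishing fixed virtual class, so they do not contribute. For $W$, formula \eqref{123purerubber} shows that its contribution to $I$ is $\int_{[W]^{\vir}} \ev_\infty^{\ast}(\mathrm{pt})/(t-c_1(\CL_0))$; expanding $1/(t-c_1(\CL_0)) = t^{-1}+t^{-2}c_1(\CL_0)+\cdots$ and using that $[W]^{\vir}$ has dimension $2n$, equal to the complex codimension of $\mathrm{pt}$, only the leading term survives for dimension reasons, and (after identifying $\ev_\infty$ on $W$ with an evaluation map of the rubber, the two rubber evaluations giving the same integral by the symmetry of $S\times\p^1/S_{0,\infty}$) the pure-rubber contribution equals $t^{-1}\,\langle\mathrm{pt},1\rangle^{S\times\p^1/S_{0,\infty}}_{v,w}$.

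The remaining input is the single-step contributions. On $M^{1\text{-step}}_r=\Quot(\CF/M(v),u_r)$ the map $\ev_\infty$ is the projection to $M(v)$, and the relative perfect obstruction theory of $\Quot(\CF/M(v),u_r)$ over $M(v)$ restricts on fibers to the absolute perfect obstruction theory of Section~\ref{subsec:quot k3} (the same argument as for the identity $\iota_\eta^{!}[\Quot(\CI/S^{[n]},u)]^{\vir}=[\Quot(I_\eta,u)]^{\vir}$ in Section~\ref{sec:Quot scheme integrals}); hence capping $\ev_\infty^{\ast}(\mathrm{pt})$ against the contribution \eqref{EXTREMECONTRIBUTION} replaces $[\Quot(\CF/M(v),u_r)]^{\vir}$ by $[\Quot(F,u_r)]^{\vir}$ for $F\in M(v)$. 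Since $\dim[\Quot(F,u_r)]^{\vir}=\vd=u_r\cdot(u_r-v)+1$, only the $k=1$ term of the sum over $\Ft$-weights in \eqref{EXTREMECONTRIBUTION} survives, and the contribution of $M^{1\text{-step}}_r$ becomes
\[
t^{-1}\,(-1)^{(r-1)(w\cdot v)/r}\,\Big(-\tfrac{1}{r}\Big)\int_{[\Quot(F,u_r)]^{\vir}} c_{\vd}\big(R\Hom_S(\CK,\CQ)^{\vee}\big),
\]
where we have used $R\Hom_S(K,F-K)=R\Hom_S(\CK,\CQ)$ in $K$-theory. Now $c_k(\mathcal{A}^{\vee})=(-1)^k c_k(\mathcal{A})$, and $c_{\vd}(T^{\vir}_{\Quot(F,u_r)})=c_{\vd}(R\Hom_S(\CK,\CQ)+\CO)=c_{\vd}(R\Hom_S(\CK,\CQ))$ since $\CO$ is trivial, so the integral equals $(-1)^{\vd}e^{\vir}(\Quot(F,u_r))$.

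Finally, a sign chase finishes the argument. On the K3 Mukai lattice $v^2$ and $u_r^2$ are even and $u_r=-w/r-s_rv$, so $u_r\cdot v\equiv (w\cdot v)/r$ and $\vd\equiv 1+(w\cdot v)/r\pmod 2$; hence $(-1)^{(r-1)(w\cdot v)/r}\cdot(-1)\cdot(-1)^{\vd}=(-1)^{w\cdot v}$. Summing the contributions over $r\mid w$ and equating the coefficient of $t^{-1}$ in $I=0$ with the fixed-point side gives
\[
0=\langle\mathrm{pt},1\rangle^{S\times\p^1/S_{0,\infty}}_{v,w}+\sum_{r\mid w}\frac{(-1)^{w\cdot v}}{r}\,e^{\vir}(\Quot(F,u_r)),
\]
which is the claimed identity (the right-hand side being independent of the chosen $F$ by deformation invariance over the connected $M(v)$). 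I expect the steps requiring the most care to be checking that the reduced virtual class of the cap is $\BC^\ast$-equivariant so that virtual localization and the rubber calculus genuinely apply — this is part of the framework set up in Section~\ref{section:cap} — and the bookkeeping of the signs above; the crucial geometric ingredient, the second-cosection vanishing, is Proposition~\ref{prop:double cosection} and is already available.
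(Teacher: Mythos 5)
Your proposal is correct and follows essentially the same route as the paper: the vanishing of $\int_{[M_{v,w}(S\times\p^1/S_\infty)]^{\vir}}\ev_\infty^{\ast}(\mathrm{pt})$ for dimension reasons, virtual $\BC^\ast$-localization with the mixed and multi-step extremal loci killed by the second cosection (Proposition~\ref{prop:double cosection}), the pure-rubber term contributing $t^{-1}\langle\mathrm{pt},1\rangle^{S\times\p^1/S_{0,\infty}}_{v,w}$, and the single-step loci evaluated via \eqref{EXTREMECONTRIBUTION} after cutting $[\Quot(\CF/M(v),u_r)]^{\vir}$ down to $[\Quot(F,u_r)]^{\vir}$ with the point class. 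Your sign bookkeeping $(-1)^{(r-1)(w\cdot v)/r}\cdot(-1)\cdot(-1)^{\vd}=(-1)^{w\cdot v}$ agrees with the paper's computation, so no changes are needed.
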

\begin{proof}
The moduli space $M_{v,w}(S \times \p^1/S_{\infty})$ is proper and has a (reduced) virtual fundamental class of dimension $2n+1$.
Hence for dimension reasons we have
\[
\int_{[ M_{v,w}(S \times \p^1/S_{\infty}) ]^{\vir}} \ev_{\infty}^{\ast}(\mathrm{pt}) = 0.
\]
We lift the virtual class to an equivariant class and apply the virtual localization formula. By the discussion in Section~\ref{subsec:mixed pure rubber}, only the extremal component and the component of pure rubber type contributes in the localization formula, which gives:
\[
\int_{[ M_{\mathrm{ext}} ]^{\text{red}} } \frac{\ev_{\infty}^{\ast}(\mathrm{pt})}{e_{\BC^{\ast}}( N^\vir )}
+
\int_{[ M_{v,w}(S \times \p^1 / S_{0,\infty})^{\sim} ]^{\vir}}
\frac{\ev_{\infty}^{\ast}(\mathrm{pt})}{t - c_1(\CL_0)} = 0
\]
Taking the $t^{-1}$-coefficient we obtain
\[
\langle \mathrm{pt}, 1 \rangle^{S \times \p^1 / S_{0,\infty}}_{v,w} =
(-1) \cdot \mathrm{Coefficient}_{t^{-1}}\left[ \int_{[ M_{\mathrm{ext}} ]^{\text{red}} } \frac{\ev_{\infty}^{\ast}(\mathrm{pt})}{e_{\BC^{\ast}}( N^\vir )} \right].
\]
For any $F \in M(v)$ we have
		\begin{align*}
			& \int_{[ M_{\mathrm{ext}} ]^{\text{vir}} } \frac{1}{e_{\BC^{\ast}}( N^\vir )} \ev_{\infty}^{\ast}(\mathrm{pt}) \\
			\overset{(\ast)}{=} & \sum_{r | w} \int_{[ M^{1\text{-step}}_r ]^{\text{vir}}} \frac{1}{e_{\BC^{\ast}}(N^{\text{vir}})} \ev_{\infty}^{\ast}(\mathrm{pt}) \\
			\overset{(\ast \ast)}{=} & \sum_{r|w} (-1)^{(r-1) \cdot \frac{w \cdot v}{r}}
			\sum_{k \in \BZ} (-rt)^{-k} \int_{ [ \Quot( \CF/M(v), u_r ) ]^{\text{vir}} } c_{k+u_r \cdot (u_r-v)}( R \Hom_S(K, F-K)^{\vee} ) \rho^{\ast}(\mathrm{pt}) \\
			\overset{(\ast \ast \ast)}{=} & \sum_{r|w} \frac{1}{-r t} (-1)^{(r-1) \frac{w \cdot v}{r}} (-1)^{1 + u_r \cdot (u_r-v)}\int_{[ \Quot( F, u_r ) ]^{\text{vir}}} c_{1+u_r \cdot (u_r-v)}\left( R \Hom( K, F- K) + \CO \right) \\
			= & \sum_{r|w} \frac{1}{r t} (-1)^{w \cdot v}  e^{\text{vir}}( \Quot(F,u_r) ).
		\end{align*}
		where $(\ast)$ follows since only the $1$-step locus contributes by
		Proposition~\ref{prop:double cosection},
		$(\ast \ast)$ follows from \eqref{EXTREMECONTRIBUTION} and by checking that the $K$-theory class of the fixed obstruction theory
		given in Proposition \ref{prop:fixed virtual class} equals the class of the
		reduced perfect obstruction theory  of the relative Quot scheme $\Quot( \CF / M(v))$
		and we let $\rho : \Quot( \CF/M(v)) \to M(v)$ denote the morphism to the base,
		and $(\ast \ast \ast)$ follows since $\Quot(F,u_r)$ has reduced virtual dimension $u_r \cdot (u_r-v)+1$, see Section~\ref{subsec:quot k3}.
		This yields the claim.
%
%
%and using Theorem~\ref{thm:wallcrossing term for real}
%yields the claim.
%
%\langle \mathrm{pt}, 1 \rangle^{S \times \p^1 / S_{0,\infty}}_{v,w}
%=
%
%- t \int_{[ M_{\mathrm{ext}} ]^{\text{red}} } \frac{1}{e_{\BC^{\ast}}( N^\vir )} \ev_{\infty}^{\ast} [F] \]
\end{proof}

%We have obtained the proof of our first main theorem:
\begin{proof}[Proof of Theorem~\ref{thm:wallcrossing higher rank}]
This follows by combining Theorem~\ref{thm:Nesterov}
and Theorem~\ref{thm:wc term}.
\end{proof}
%\mathrm{pt}
%%\end{cor}
%On the pure rubber type components
%Consider the evaluation morphism given by intersection with the fiber over 
%be the evaluation maps on the pure-rubber components.

%	Theorems~\ref{thm:Wall  cross rank 1} and 

\subsection{Rank 1} \label{subsec:cap rank 1}
Let $n \geq 0$ and assume that
\[ v = (1,0,1-n). \]
We fix the class $y=-[\CO_{s}]$ for a point $s \in S$ which satisfies $v \cdot v(y)=1$. The condition $w \cdot v(y) = 0$ then says that $w$ is of rank zero, so it is of the form
%After tensoring with a line bundle from $\p^1$ we can assume that
	\[ w = (0, -\beta, -m) \]
for some $\beta \in \Pic(S)$ and $m \in \BZ$.
%	We have $\div(v \wedge w) = \div(\beta,m)$,

%For divisors $r | (\beta,m)$ the condition \eqref{dfsd0-333} shows that $s_r = 0$, so $i_0=0$.

We prove the quasimap wall-crossing in rank $1$:
\begin{proof}[Proof of Theorem~\ref{thm:Wall  cross rank 1}]
This follows by specializing Theorem~\ref{thm:wallcrossing higher rank} to $v=(1,0,1-n)$ and $w=(0,-\beta,-m)$. Indeed,
in this case the equality of moduli spaces
\[ M_{(v,w)}(S \times E) = \Hilb_m(S \times E, (\beta,n)) \]
shows 
\[ \DT^{S \times E}_{(v,w)} = \DT^{S \times E}_{m, (\beta,n)}. \]

Moreover, the homology class induced by $w$,
\[ w' = -\langle w, - \rangle  \in (v^{\perp})^{\ast} \cong H_2(S^{[n]},\BZ) \]
is precisely $\beta + m A$, see for example \cite[Example 2.3]{Universality}.

Finally, for divisors $r | (\beta,m)$ the condition \eqref{dfsd0-333} shows that $s_r = 0$, so $i_0=0$, and hence $u_r = \frac{1}{r}(0,\beta, m)$
in Theorem~\ref{thm:wallcrossing higher rank}.
Inserting everything we obtain
 		\[
		\DT^{S \times E}_{m, (\beta,n)}
		=
		\GW^{S^{[n]}}_{E, \beta,m}
		-
		\chi(S^{[n]})
		\sum_{r | (\beta,m)} \frac{1}{r} (-1)^{m}
		e^{\textup{vir}}(\Quot(I_z, \frac{ (0,\beta,m) }{r} ) )
		\]
where $I_z$ is the ideal sheaf of a subscheme $z \in S^{[n]}$,
which is precisely what was claimed.
%This is precisely the claim.
\end{proof}

\begin{rmk} \label{rmk:1 step component in rank 1}
For later use we also remark that in rank $1$ the $1$-step component
can be written as the relative Hilbert scheme
	\[ M_{r}^{1\text{-step}}
	=
	\Quot( \CI_{\CZ} / S^{[n]}, (0, \beta/r, m/r) )
	=
	S_{\beta/r}^{[n_1, n]} \]
	where $n_1 = n + \frac{m}{r} + \frac{1}{2} \beta^2/r^2$.
\end{rmk}

%Combining Theorem~\ref{thm:Nesterov} and Corollary~\ref{cor:wc term}
%in this case we hence get:
%\[
%		\DT^{S \times E}_{(1,0,1-n), (0,-)}
%		=
%		\GW^{M(v)}_{E, w'}
%		+
%		\chi(S^{[n]})
%\langle \mathrm{pt}, 1 \rangle^{S \times \p^1 / S_{0,\infty}}_{v,w}
%		\]
%		where $w' = - \langle w, - \rangle : v^{\perp} \to \BZ$ is the homology class induced by $w$,
%
%
%
%		\[
%		\DT^{S \times E}_{m, (\beta,n)}
%		=
%		\GW^{S^{[n]}}_{E, \beta,m}
%		-
%		\chi(S^{[n]})
%		\sum_{r | (\beta,m)} \frac{1}{r} (-1)^{m}
%		\mathsf{Q}_{n,\frac{1}{r}(\beta,m)}
%		\]
%		
%		
%We obtain:
%
%
%%For divisors $r | (\beta,m)$ the sheaves parametrized by $M_{r}^{1\text{-step}}$ 	are all of the form 
%%	\eqref{KF} where $i_0 = 0$.
%	%have all $i_0$ (since $K$ is of rank $1$).
%	The $r$-step component can then be written as the relative Hilbert scheme
%	\[ M_{r}^{1\text{-step}}
%	=
%	\Quot( \CI_{\CZ} / S^{[n]}, (0, \beta/r, m/r) )
%	=
%	S_{\beta/r}^{[n_1, n]} \]
%	where $n_1 = n + \frac{m}{r} + \frac{1}{2} \beta^2/r^2$.
%	

%The contribution of the extremal locus computed in Theorem~\ref{thm:wallcrossing term for real} specializes to:
%\begin{cor}
%For any $z \in S^{[n]}$ with corresponding ideal sheaf $I_z$ we have
%\[ 
%		\int_{[ M_{\mathrm{ext}} ]^{\text{red}} } \frac{1}{e_{\BC^{\ast}}( N^\vir )} \ev_{\infty}^{\ast} [I_z]
%		\ =\  \sum_{r | (\beta,m)}
%		\frac{ (-1)^{m} }{rt}
%		e^{\textup{vir}}(\Quot(I_z, \frac{ (0,\beta,m) }{r} ) )
%		\]
%
%\end{cor}
		
%\[			\sum_{r | (\beta,m)} \frac{1}{r} (-1)^{m}
%		\mathsf{Q}_{n,\frac{1}{r}(\beta,m)}
%		\]

	\section{Multiple cover conjectures}
	\label{sec:multiple cover}
	
	\subsection{Definitions}
	Let $C$ be a smooth curve with distinct points $z_1, \ldots, z_k \in C$
	and consider the Hilbert scheme
	\[ \Hilb := \Hilb_{n,\beta,m}(S \times C/S_{z_1, \ldots, z_k}) \]
	parametrizing $1$-dimensional subschemes $Z$ of the relative geometry
	\[ (S \times C, S_{z_1, \ldots, z_k}), \quad S_{z_1, \ldots z_k} = \bigsqcup S \times \{ z_i \}. \]
	The numerical invariants are determined by the Mukai vector:
	\[ \ch(I_Z) \sqrt{\td}_S = (v(I_Z), w(I_Z)) = ( (1,0,1-n), (0, -\beta,-m) ). \]
	In other words,
	\[ [Z] = \iota_{\ast} \beta + n [C], \quad \ch_3(\CO_Z) = m, \]
	where $\iota : S = S \times \{ z \} \to S \times C$ is the natural inclusion for some $z \in C$.
	
	There exists evaluation maps at the markings
	\[ \ev_{z_i} : \Hilb_{n,\beta,m}(S \times C/S_{z_1, \ldots, z_k}) \to S^{[n]}. \]
	Let $\CZ \subset \Hilb \times S \times C$ denote the universal subscheme, and consider the diagram
	\[
	\begin{tikzcd}
		\Hilb \times S \times C \ar{r}{\pi_X} \ar{d}{\pi} & S \times C \\
		\Hilb.
	\end{tikzcd}
	\]
	For any $\gamma \in H^{\ast}(S \times C)$ we define the descendants
	\[ \tau_{i}(\gamma) = \pi_{\ast}( \ch_{2+i}(\CO_{\CZ}) \pi_X^{\ast}(\gamma) ) \in H^{\ast}(\Hilb). \]
	
	If $(\beta,m) \neq 0$, then the moduli space carries a reduced virtual class $[ \Hilb ]^{\vir}$ of dimension $(2-2g(C))n + 1$.
	Given $\lambda_1, \ldots, \lambda_k \in H^{\ast}(S^{[n]})$ and $\beta \neq 0$ we define
	\[ Z^{S \times C/S_{z_1, \ldots, z_k}}_{(\beta,n)}\left( \prod_{i} \tau_{k_i}(\gamma_i) \middle| \lambda_1, \ldots, \lambda_k \right)
	=
	\sum_{m \in \BZ} (-p)^{m}
	\int_{[ \Hilb ]^{\text{vir}} }
	\prod_{i} \tau_{k_i}(\gamma_i) \prod_{i} \ev_{z_i}^{\ast}(\lambda_i).
	\]
	%When clear we omit the target geometry from the supscript on the left.
	For any $(\beta,m)$ the moduli space $\Hilb$ carries also a standard (non-reduced) virtual class $[ \Hilb ]^{\std}$.
	By the existence of the non-trivial cosection it vanishes if $(\beta,m) \neq (0,0)$.
	If we integrate over the standard virtual class on the right hand side above,
	we decorate $Z$ with a superscript $\std$ on the left.
	In this case, $Z^{\std}( \ldots ) \in \BQ$.
	
	We state the degeneration formula \cite{LiWu} in the setting of reduced invariants \cite{MPT}.
	Related discussions appear in \cite{K3xE, K3xP1} and \cite[App.A]{HilbK3}.
	Let $C \rightsquigarrow C_1 \cup_x C_2$ be a degeneration of $C$.
	%and assume that $C_2$ is rational: $C_2 \cong \p^1$.
	Let
	\[ \{ 1, \ldots, k \} = A_1 \sqcup A_2 \]
	be a partition of the index set of points.
	We write $z(A_i) = \{ z_j | j \in A_i \}$.
	We choose that the points in $A_i$ specialize to the curve $C_i$.
	Fix also a partition of the index set of interior markings
	\[ \{ 1, \ldots, \ell \} = B_1 \sqcup B_2. \]
	Then for any $\alpha_i \in H^{\ast}(S)$ we have
	\begin{gather*}
		Z^{S \times C/S_{z_1, \ldots, z_k}}_{(\beta,n)}\left( \prod_{i=1}^{\ell} \tau_{k_i}( \omega \alpha_i ) \middle| \lambda_1, \ldots, \lambda_k \right) = \\
		Z^{S \times C_1/S_{z(A_1),x}}_{(\beta,n)}\left( \prod_{i \in B_1} \tau_{k_i}( \omega \alpha_i ) \middle| \prod_{i \in A_1} \lambda_i, \Delta_1 \right)
		Z^{S \times C_2/S_{z(A_2),x}, \std}_{(0,n)}\left( \prod_{i \in B_2} \tau_{k_i}( \omega \alpha_i ) \middle| \prod_{i \in A_2} \lambda_i, \Delta_2 \right) \\
		+
		Z^{S \times C_1/S_{z(A_1),x},\std}_{(0,n)}\left( \prod_{i \in B_1} \tau_{k_i}( \omega \alpha_i ) \middle| \prod_{i \in A_1} \lambda_i, \Delta_1 \right)
		Z^{S \times C_2/S_{z(A_2),x}}_{(\beta,n)}\left( \prod_{i \in B_2} \tau_{k_i}( \omega \alpha_i ) \middle| \prod_{i \in A_2} \lambda_i, \Delta_2 \right)
	\end{gather*}
	where $\Delta_1, \Delta_2$ stands for summing over the K\"unneth decomposition of the diagonal class
	\[ [ \Delta ] \in H^{\ast}(S^{[n]} \times S^{[n]}). \]
	
	Alternatively, we can also degenerate $C$ to an irreducible nodal curve and resolve it by a curve $C'$. Let $x_1, x_2 \in C'$ be the preimage of the node.
	Write $z= (z_1, \ldots, z_k)$.
	Then
	\begin{gather*}
		\label{deg_nodal}
		Z^{S \times C/S_{z}}_{(\beta,n)}\left( \prod_{i=1}^{\ell} \tau_{k_i}( \omega \alpha_i ) \middle| \lambda_1, \ldots, \lambda_k \right) 
		= 
		Z^{S \times C'/S_{z, x_1, x_2}}_{(\beta,n)}\left( \prod_{i=1}^{\ell} \tau_{k_i}( \omega \alpha_i ) \middle| \lambda_1, \ldots, \lambda_k, \Delta_1, \Delta_2 \right).
	\end{gather*}

	\begin{rmk}
		%We only highlight two points: 
		%First, 
		We index the generating series $Z$ by $\ch_3(\CO_Z)$ in order to avoid extra factors of $p$ in the degeneration formula above.
		%Indeed, for $Z \subset S \times (C_1 \cup C_2)$ with $Z_i = Z|_{S \times C_i}$ and 
		%$\eta = Z_1 \cap Z_2$ we have the exact sequence $0 \to \CO_Z \to \CO_{Z_1} \oplus \CO_{Z_2} \to \CO_{\eta} \to 0$. Hence
		%\[ \chi(\CO_{Z}) = \chi(\CO_{Z_1}) + \ch(\CO_{Z_2}) - \ell(\eta) \]
		%Using $m = \ch_3(\CO_Z)$ and $m_i = \ch_3(\CO_{Z_i})$ we find that
		%\[
		%m + (1-g) n = \chi(\CO_Z) %=  \chi(\CO_{Z_1}) + \ch(\CO_{Z_2}) - n 
		%= m_1 + (1-g_1) n + m_2 + (1-g_2) n - n = m_1 + m_2 + (1-g) n \]
		%where $g=g(C)$ and $g_i = g(C_i)$, and thus $m=m_1 +m_2$.
		%The case of the nodal degeneration is similar.
	\end{rmk}

	\subsection{Induction scheme}
	Our goal here is to express invariants of the form
	\begin{equation} Z^{S \times C/S_{z_1, \ldots, z_k}}_{(\beta,n)}(I | \lambda_1, \ldots, \lambda_k ),
		\quad I = \prod_{i=1}^{\ell} \tau_{k_i}( \omega \alpha_i )
		\label{Z general}
	\end{equation}
	%where $I = \prod_{i=1}^{\ell} \tau_{k_i}( \omega \alpha_i )$
	in terms of invariants of the cap geometry of the form
	\begin{equation} Z^{S \times \p^1/S_{\infty}}_{(\beta,n)}(I' | \lambda' ), \quad I' = \prod_{i} \tau_{k'_i}( \omega \alpha'_i ). \label{Z cap} \end{equation}
	The general strategy to do so is well-known and goes back 
	at least to work of Okounkov and Pandharipande in \cite{OPLocal} or even \cite{OkPandVir}.
	The process is in general highly non-linear. However for reduced theories as considered here the dependence becomes $\BQ$-linear. The linearity will allow us to prove the compatibility of the multiple cover formula with this process.
	
	This reduction is based on the following simple but useful lemma
	which relates descendants and the Nakajima basis of $S^{[n]}$.
	Given a cohomology weighted partition 
	\[ \mu = (\mu_i, \alpha_i)_{i=1}^{\ell}, \quad \mu_i \geq 0, \alpha_i \in H^{\ast}(S) \]
	define the special monomial of descendants
	\[ \tau[\mu] := \prod_{i=1}^{\ell} \tau_{\mu_i-1}( \alpha_i \cdot \omega ). \]
	%where we let $\omega \in H^2(C)$ denote the class of a point on $C$.
	and the Nakajima cycle
	\[ \mu := \prod_{i} \Fq_{\mu_i}(\gamma_i) 1_{S^{[0]}} \in H^{\ast}( S^{[ \sum_i \mu_i ]} ) \]
	where $\Fq_i(\alpha) : H^{\ast}(S^{[a]}) \to H^{\ast}(S^{[a+i]})$ are the Nakajima creation operators
	in the convention of \cite{NOY} (actually the precise convention is not important).
	Let $\CB$ be a basis of $H^{\ast}(S)$.
	We say $\mu$ is $\CB$-weighted if $\alpha_i \in \CB$ for all $i$.
	
	\begin{lemma}[{\cite[Proposition 6]{PaPixJap}}] \label{lemma:PaPixJap}
		The matrix indexed by $\CB$-weighted partitions of $n$ with coefficients
		\[ Z^{S \times \p^1/S_{\infty}, \std}_{(0,n)}( \tau[\mu], \nu ) \in \BQ \]
		is invertible.
	\end{lemma}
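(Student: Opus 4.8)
The plan is to recognize the matrix as a perturbation, in the formal parameter $p$, of the Poincar\'e pairing between two bases of $H^{\ast}(S^{[n]})$, and to deduce invertibility from the non-degeneracy of that pairing.

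First I would unwind the entries: by definition $Z^{S\times\p^1/S_\infty,\std}_{(0,n)}(\tau[\mu],\nu)=\sum_{m}(-p)^m\int_{[\Hilb_{n,0,m}(S\times\p^1/S_\infty)]^{\std}}\tau[\mu]\,\ev_\infty^{\ast}(\nu)$, where $\tau[\mu]=\prod_i\tau_{\mu_i-1}(\alpha_i\omega)$. Since $\omega\in H^2(\p^1)$ is a point class, the sum runs over $m\geq 0$, the minimal value $m=0$ being attained exactly by the cylinders $Z=\zeta\times\p^1$, $\zeta\in S^{[n]}$; there $\Hilb_{n,0,0}(S\times\p^1/S_\infty)$ is smooth of the expected dimension $2n$ and unobstructed (the relative $\Ext^2_0$ of $I_{\zeta\times\p^1}$ vanishes, just as on $S^{[n]}$), the descendent $\tau_{\mu_i-1}(\alpha_i\omega)$ restricts to the Hilbert-scheme descendent $\tau^{S^{[n]}}_{\mu_i-1}(\alpha_i)=\pi_{\ast}(\ch_{\mu_i+1}(\CO_{\CZ})\pi_S^{\ast}\alpha_i)$ (the $\p^1$ integrates out against $\omega$), and $\ev_\infty^{\ast}(\nu)$ restricts to the Nakajima class $\nu$. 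Writing $\tau[\mu]^{S^{[n]}}=\prod_i\tau^{S^{[n]}}_{\mu_i-1}(\alpha_i)$ this yields
\[
 Z^{\std}_{(0,n)}(\tau[\mu],\nu)\equiv\int_{S^{[n]}}\tau[\mu]^{S^{[n]}}\cup\nu\pmod{p}.
\]

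Next I would invoke the classical structure theory of $H^{\ast}(S^{[n]})$: by work of Lehn and of Li--Qin--Wang the Chern-character (descendent) classes are triangular in the Nakajima basis, so that expanded in $\{\lambda\}_{\lambda\vdash n}$ the class $\tau[\mu]^{S^{[n]}}$ is a nonzero multiple of $\mu$ plus a combination of $\lambda$'s with $\ell(\lambda)>\ell(\mu)$. Hence, as $\mu$ runs over $\CB$-weighted partitions of $n$, the classes $\tau[\mu]^{S^{[n]}}$ form a $\BQ$-basis of $H^{\ast}(S^{[n]})$, and the matrix $\bigl(\int_{S^{[n]}}\tau[\mu]^{S^{[n]}}\cup\nu\bigr)_{\mu,\nu}$ is the Poincar\'e pairing between this basis and the Nakajima basis: it is block-triangular for the refinement order, with diagonal blocks the non-degenerate Poincar\'e pairing on each partition shape, hence invertible over $\BQ$. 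As this invertible matrix is the reduction modulo $p$ of $\bigl(Z^{\std}_{(0,n)}(\tau[\mu],\nu)\bigr)_{\mu,\nu}$, the latter is invertible over $\BQ[[p]]$, a fortiori over $\BQ((p))$; and if one already knows the entries to be constant in $p$, the conclusion is immediate.

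The main obstacle is pinning down the leading term rigorously: that the $m=0$ moduli space really is $S^{[n]}$ with no further $2n$-dimensional components, that its standard obstruction theory vanishes there, and that the insertions restrict as asserted. The clean way around this is the approach of Okounkov and Pandharipande to local curves --- degenerate $\p^1$ into a chain so that the relative fibre $S_\infty$ and each descendent fibre $S\times\{x_i\}$ lie on separate rational components, apply the degeneration formula for the standard theory, and use rubber calculus together with the Hilbert/Hilbert comparison to turn every resulting term into a cup product on $S^{[n]}$. This is precisely the bookkeeping carried out in \cite[Proposition~6]{PaPixJap}, which I would ultimately cite.
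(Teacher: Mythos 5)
Your proposal is correct and follows essentially the same route as the paper: reduce to the $(\beta,m)=(0,0)$ contribution, where the relative Hilbert scheme is $S^{[n]}$ with its fundamental class, so the entries become pairings of Hilbert-scheme descendents against Nakajima cycles, and invertibility follows from the triangularity of descendents in the Nakajima basis (with the remaining details deferred, as in the paper, to \cite{PaPixJap}). The only (harmless) deviation is that you treat possible higher-$m$ terms as $O(p)$ perturbations and invert over $\BQ[[p]]$, whereas the paper uses the cosection vanishing of the \emph{standard} virtual class for $(\beta,m)\neq(0,0)$, so the entries are literally the rational numbers $\int_{S^{[n]}}\prod_j\tau^{S^{[n]}}_{\mu_j-1}(\alpha_j)\cup\nu$, as asserted in the statement.
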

	\begin{proof}
		We give a sketch of the proof, see \cite{PaPixJap} for full details. For $(\beta,m)=(0,0)$, we have that
		\[ \Hilb_{n,(0,0)}(S \times \p^1/S_{z_1, \ldots, z_k}) = S^{[n]}. \]
		The virtual dimension matches the actual dimension, so the virtual class is just the fundamental class.
		Hence in this case one finds:
		\[
		Z^{S \times \p^1/S_{\infty}, z_1, \ldots, z_k, \std}_{(0,n)}( \tau[\mu] | \nu_1, \ldots, \nu_k ) 
		=
		\int_{S^{[n]}} \prod_{j=1}^{\ell(\mu)} \tau^{S^{n]}}_{\mu_j-1}(\alpha_j) \cdot \nu_1 \cdots \nu_k \]
		where $\tau^{S^{n]}}_{k}(\alpha) = \pi_{\ast}( \ch_{2+k}(\CO_{\CZ}) \pi_S^{\ast}(\alpha))$
		are the descendants on the Hilbert scheme $S^{[n]}$ (where $\CZ \subset S^{[n]} \times S$ is the universal family).
		The claim hence follows from checking that certain intersection numbers on the Hilbert scheme (determined by a suitable partial ordering)
		between descendants and Nakajima cycles do not vanish.
	\end{proof}
	
	\vspace{3pt}
	\noindent
	\textbf{The induction scheme:}
	We reduce the general invariants \eqref{Z general} to invariants \eqref{Z cap}
	by induction on the genus $g$ and the number of relative markings $k$.
	If $g(C) > 0$ we degenerate $C$ to a curve with a single node, and apply the degeneration formula in this case. %\eqref{deg_nodal}.
	If $g(C) = 0$ and $k \geq 2$, we consider the invariant
	\[
	Z^{S \times \p^1/S_{z_1, \ldots, z_{k-1}}}_{(\beta,n)}(I \cdot \tau[\lambda_k] | \lambda_1, \ldots, \lambda_{k-1} ).
	\]
	which is known by the induction hypothesis.
	The degeneration formula yields:
	\begin{gather*}
		Z^{S \times \p^1/S_{z_1, \ldots, z_{k-1}}}_{(\beta,n)}(I \tau[\lambda_k] | \lambda_1, \ldots, \lambda_{k-1} )
		= \\
		Z^{S \times \p^1/S_{z_1, \ldots, z_{k-1},x}}_{(\beta,n)}(I | \lambda_1, \ldots, \lambda_{k-1}, \Delta_1 ) Z^{S \times \p^1/S_{x}, \std}_{(0,n)}(\tau[\lambda_k] | \Delta_2 ) \\
		+ Z^{S \times \p^1/S_{z_1, \ldots, z_{k-1},x}, \std}_{(0,n)}(I | \lambda_1, \ldots, \lambda_{k-1}, \Delta_1 ) Z^{S \times \p^1/S_{x}}_{(\beta,n)}(\tau[\lambda_k] | \Delta_2 ).
	\end{gather*}
	By subtracting the second term on the right of the equality,
	and using Lemma~\ref{lemma:PaPixJap} to invert this relation, we see that
	\eqref{Z general} is a ($\BQ$-linear!) combination of terms which are lower in the ordering.
	Since the base case is $(g(C),k) = (0,1)$, this concludes the scheme.
	% (for us it is important that it is a $\BQ$-linear combination; this is what fails for non-reduced theories).

	\subsection{Statement and proof of multiple cover formula}
	Let $\beta \in \Pic(S)$ be an effective class
	and for every $r|\beta$ let $S_r$ be a K3 surface and
	\[ \varphi_r : H^{2}(S,\BR) \to H^{2}(S_r, \BR) \]
	be a real isometry such that
	$\varphi_r(\beta/r) \in H_2(S',\BZ)$
	is a primitive effective curve class.
	We extend $\varphi_r$ to the full cohomology lattice
	by $\varphi_r(\pt) = \pt$ and $\varphi_r(1) = 1$.
	We can further extend $\varphi_r$ to an action on the cohomology of the Hilbert scheme
	\[
	\varphi_r : H^{\ast}(S^{[n]}) \to H^{\ast}( S_r^{[n]} )
	\]
	by letting it act on Nakajima cycles by:
	\begin{equation}
		\label{induced on Hilb}
		\varphi_r( \prod_i \Fq_{\mu_i}(\alpha_i) 1 )
		=
		\prod_i \Fq_{\mu_i}( \varphi_r(\alpha_i)) 1.
	\end{equation}
	Then $\varphi_r$ is an isometric ring isomorphism and satisfies\footnote{This is clear if $\varphi_r:H^{\ast}(S) \to H^{\ast}(S_r)$ is the parallel transport operator of a deformation from $S$ to $S_r$, and it follows in general by observing that the parallel transport operators are Zariski dense in the space of isometries from $H^2(S,\BR) \to H^2(S_r,\BR)$.}
	\[ \varphi_r( \tau_i(\alpha) ) = \tau_i( \varphi_r(\alpha)). \]
	\begin{thm} \label{thm:mc for real}
		We have
		\begin{equation} \label{mc conjecture}
			\begin{gathered}
				Z^{S \times C/S_{z_1, \ldots, z_k}}_{(\beta,n)}\left( \prod_{i=1}^{\ell} \tau_{k_i}( \omega \alpha_i ) \middle| \lambda_1, \ldots, \lambda_k \right) \\
				=
				\sum_{r|\beta}
				Z^{S_r \times C/S_{z_1, \ldots, z_k}}_{(\varphi_r(\beta/r),n)}\left( \prod_{i=1}^{\ell} \tau_{k_i}( \omega \varphi_r(\alpha_i) ) \middle| \varphi_r(\lambda_1), \ldots, \varphi_r(\lambda_k) \right)(p^k)
			\end{gathered}
		\end{equation}
	\end{thm}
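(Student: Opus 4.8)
The plan is to combine the induction scheme of Section~\ref{sec:multiple cover} with the $\BC^{\ast}$-localization analysis of the cap $(S \times \p^1)/S_{\infty}$ from Section~\ref{section:cap}: one reduces the general identity to a comparison of single-step contributions on the cap, which is then forced by the universality result Theorem~\ref{thm:universality} together with the fact that $\varphi_r$ is an isometry.

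First I would reduce to $C = \p^1$ with the single relative divisor $S_{\infty}$. The induction scheme writes any $Z^{S \times C/S_{z_1, \ldots, z_k}}_{(\beta,n)}(I \mid \lambda_{\bullet})$ as a $\BQ$-linear combination of cap invariants $Z^{S \times \p^1/S_{\infty}}_{(\beta,n)}(I' \mid \lambda')$, with coefficients assembled from the degeneration formula and from the inverse of the matrix $Z^{\std}_{(0,n)}(\tau[\mu],\nu)$ of Lemma~\ref{lemma:PaPixJap}. These coefficients do not involve $\beta$, and the scheme is intertwined by $\varphi_r$: since $\varphi_r \colon H^{\ast}(S^{[n]}) \to H^{\ast}(S_r^{[n]})$ is an isometric ring isomorphism with $\varphi_r(\pt)=\pt$, $\varphi_r(1)=1$ and $\varphi_r(\tau_i(\alpha))=\tau_i(\varphi_r(\alpha))$, the operator $\varphi_r \otimes \varphi_r$ fixes the diagonal class of $S^{[n]} \times S^{[n]}$, so $\varphi_r$ commutes with the degeneration formula; and $\varphi_r$ fixes every $\beta=0$ invariant $Z^{\std}_{(0,n)}$, which by the proof of Lemma~\ref{lemma:PaPixJap} is an intersection number on $S^{[n]}$ expressed through the ring structure and the Poincar\'e pairing alone. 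Hence \eqref{mc conjecture} for caps implies it in general, and only the cap remains.

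For a cap invariant I would lift the reduced virtual class of $M_{v,w}(S \times \p^1/S_{\infty})$, with $v = (1,0,1-n)$ and $w = (0,-\beta,-m)$, to $\BC^{\ast}$-equivariant cohomology and apply virtual localization, using the equivariant lift $[0] \in \p^1$ of the point class $\omega$. By Section~\ref{subsec:mixed pure rubber} the mixed components contribute $0$, and by Proposition~\ref{prop:double cosection} the extremal component contributes only through its single-step loci $M^{1\text{-step}}_r = S^{[n_1,n]}_{\beta/r}$ for $r \mid \beta$, with $n_1 = n + m/r + \frac{1}{2}(\beta/r)^2$ (Remark~\ref{rmk:1 step component in rank 1}) and contribution \eqref{EXTREMECONTRIBUTION}; so, writing $\mathrm{ext}(\beta)$ for the total extremal contribution, virtual localization gives $Z^{S \times \p^1/S_{\infty}}_{(\beta,n)}(I' \mid \lambda') = \mathrm{ext}(\beta) + (\text{pure-rubber contribution})$, with $\mathrm{ext}(\beta) = \sum_{r \mid \beta}(\text{contribution of }M^{1\text{-step}}_r)$. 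Restricting $I'$ and $\ev_{\infty}^{\ast}(\lambda')$ to $M^{1\text{-step}}_r$ produces tautological classes in the Chern classes of the universal ideal sheaves on the $S^{[n_1]}$- and $S^{[n]}$-factors; rewriting the Chern classes of $R\Hom_S(K,F-K)$ in \eqref{EXTREMECONTRIBUTION} in descendents via Grothendieck-Riemann-Roch, as for Proposition~\ref{prop:div independence for Q}, the $\BC^{\ast}$-equivariant contribution of $M^{1\text{-step}}_r$ becomes a $t$-Laurent series whose coefficients are tautological integrals over $[S^{[n_1,n]}_{\beta/r}]^{\vir}$ of the type covered by Theorem~\ref{thm:universality}. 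Running the same localization for the primitive class $\varphi_r(\beta/r)$ --- whose extremal component is only the $r'=1$ single-step locus $S^{[n_1,n]}_{\varphi_r(\beta/r)}$, with the same discrete invariants since $\varphi_r(\beta/r)^2 = (\beta/r)^2$ and $m = r m'$ forces the same $n_1$ --- Theorem~\ref{thm:universality} together with $\varphi_r(\pt) = \pt$, $\varphi_r(1) = 1$ identifies, after the substitution $p \mapsto p^r$ (which records $m = r m'$) and after accounting for the sign $(-1)^{(r-1) m'}$ in \eqref{EXTREMECONTRIBUTION}, the contribution of $M^{1\text{-step}}_r$ in class $\beta$ with $\mathrm{ext}(\varphi_r(\beta/r))$. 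Summing over $r \mid \beta$ yields $\mathrm{ext}(\beta) = \sum_{r \mid \beta} \mathrm{ext}(\varphi_r(\beta/r))|_{p \mapsto p^r}$.

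The \textbf{main obstacle} is the pure-rubber contribution of $M_{v,w}(S \times \p^1/S_{0,\infty})^{\sim}$ in \eqref{123purerubber}: this is a rubber Donaldson-Thomas invariant, and I must show it obeys the same pattern, $(\text{pure-rubber}(\beta)) = \sum_{r \mid \beta}(\text{pure-rubber}(\varphi_r(\beta/r)))|_{p \mapsto p^r}$. I would run the identical second-cosection and localization argument on the rubber geometry $S \times \p^1/S_{0,\infty}$ --- un-rigidified so as to carry a $\BC^{\ast}$-action --- organised as a simultaneous induction over the whole family of relative geometries $S \times C/S_{\bullet}$ and on the number of rubber levels in the expanded degeneration, so that each rubber contribution is expressed through cap and rubber invariants of strictly smaller complexity for which \eqref{mc conjecture} is already available. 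Adding this to the previous paragraph gives \eqref{mc conjecture} for the cap, hence in general. The delicate points are keeping this recursion both $\BQ$-linear and $\varphi_r$-equivariant and tracking the powers of $t$ and the signs precisely, so that the single-step matching above is an exact equality rather than an equality up to a unit; granting this, the only further ingredients are the cosection vanishings of Sections~\ref{sec:Quot scheme integrals} and~\ref{section:cap} and the universality of Theorem~\ref{thm:universality}.
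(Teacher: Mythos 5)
Your overall route is the paper's: reduce to the cap $S\times\p^1/S_{\infty}$ via the induction scheme (using that $\varphi_r\otimes\varphi_r$ preserves the diagonal, so the degeneration formula and the inversion of Lemma~\ref{lemma:PaPixJap} are $\varphi_r$-equivariant), then localize, kill the mixed and multi-step loci by the cosection arguments, and match the single-step contributions via universality. The genuine gap is exactly the point you flag as the ``main obstacle'', the pure-rubber term \eqref{123purerubber}, and the fix is far simpler than the rubber recursion you propose. One may assume from the start that the insertions are homogeneous with total complex degree $2n+1$, equal to the reduced virtual dimension of the cap (condition \eqref{deg condition}); otherwise the invariant vanishes and there is nothing to prove. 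With $\omega$ lifted to $[\mathbf{0}]$, the descendants restrict on the pure-rubber component to \emph{non-equivariant} relative insertions of that same total degree, while the rubber space $M_{v,w}(S\times\p^1/S_{0,\infty})^{\sim}$ has reduced virtual dimension only $2n$; expanding $1/(t-c_1(\CL_0))$ in powers of $c_1(\CL_0)$, every term integrates a class of degree at least $2n+1$ over a $2n$-dimensional virtual class, so the pure-rubber contribution vanishes identically. No multiple cover statement for rubber invariants is needed, and no induction over rubber levels.

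Your proposed substitute is not a proof as it stands: the $\BC^{\ast}$-action being localized acts trivially on the rubber target (that is precisely what ``rubber'' means), so ``running the identical second-cosection and localization argument on the un-rigidified rubber'' is not defined without substantial new input, and the simultaneous induction over geometries and rubber levels is neither set up well-foundedly nor shown to be $\varphi_r$-equivariant. Note also that your matching of single-step contributions silently needs the same degree condition: the contribution of $M^{1\text{-step}}_r$ is a priori a Laurent series in $t$ with $r$-dependent weights, and it is \eqref{deg condition} that forces the $(rt)^{-j}$ from \eqref{e expnasion} to cancel against the $(rt)^{d}$ from Lemma~\ref{lemma:descedents}, leaving honest non-equivariant integrals over $[S_{\beta/r}^{[n_1,n]}]^{\vir}$ to which Theorem~\ref{thm:universality} applies; ``tracking powers of $t$'' is not a delicacy but is resolved by this dimension constraint. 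Once you add the reduction to the degree-matched case and the resulting vanishing of the rubber term, your argument coincides with the paper's proof.
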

	\begin{proof}
		Since $\varphi_r$ is an isometry the morphism
		\[ \varphi_{r} \otimes \varphi_r : H^{\ast}(S^{[n]}) \otimes H^{\ast}(S^{[n]})
		\to H^{\ast}( S_r^{[n]} ) \otimes H^{\ast}( S_r^{[n]} ) \]
		sends $[ \Delta_{S^{[n]}} ]$ to $[ \Delta_{S_r^{[n]}} ]$.
		From this one shows in straightforward manner that \eqref{mc conjecture} is compatible with the degeneration formula.
		By the induction scheme of the previous section we are hence reduced to proving the statement for the cap 
		$S \times \p^1 / S_{\infty}$.
		
		Consider a class $\lambda \in H^{\ast}(S^{[n]})$ and 
		descendants $\tau_{k_i}(\omega \alpha_i)$, all homogeneous, such that
		\begin{equation} \label{deg condition}
			\deg(\lambda) + \sum_i \deg \tau_{k_i}(\omega \alpha_i) = 2n+1
		\end{equation}
		where $\deg(\gamma)$ denotes the complex cohomology degree of a class $\gamma$, that is $\gamma \in H^{2 \deg(\gamma)}$.
		(Otherwise the invariants below will vanish, so there is nothing to prove.) We apply the virtual localization formula to the series
		\[
		Z^{S \times \p^1/S_{\infty}}_{(\beta,n)}\left( \prod_{i} \tau_{k_i}(\omega \alpha_i) \middle| \, \lambda \,  \right)
		\]
		with respect to the scaling action on the $\p^1$,
		where we lift all the point classes $\omega \in H^2(\p^1)$ to the equivariant class $[\mathbf{0}] \in H^2_{\BC^{\ast}}(\p^1)$.
		The formula has contributions from the extremal, the mixed and the pure-rubber components.
		
		By the discussion in Section~\ref{subsec:mixed pure rubber}
		the contribution from the mixed components vanishes. 
		The contribution from the pure-rubber components is
		\[
		Z^{S \times \p^1 / S_{0, \infty}, \sim}_{(\beta,n)}
		\left( \frac{1}{t - c_1(\CL_0)} \middle| \lambda, \gamma \right)
		\]
		where $\sim$ stands for rubber, and the second relative insertion is
		\[ \gamma = \prod_{i=1}^{\ell} \tau^{S^{[n]}}_{k_i}(\alpha_i) \in H^{\ast}(S^{[n]}). \]
		Since we have the (non-equivariant) degree $\deg(\gamma) + \deg(\lambda) = 2n+1$ by \eqref{deg condition}
		and the rubber space is of (non-equivariant) virtual dimension $2n$, the above integral vanishes. Only the extremal component contributes in the virtual localization.
		
		Moreover, Proposition~\ref{prop:double cosection} shows that
		from the extremal component
		only the 1-step component can contribute.
		
		We analyze now the contribution from the 1-step component.
		By equation \eqref{EXTREMECONTRIBUTION} in Section~\ref{subsec:single step} and Remark~\ref{rmk:1 step component in rank 1}
%		and since we are in rank $1$ (Section~\ref{subsec:cap rank 1})
		one finds that:
		\begin{gather*}
			Z^{S \times \p^1/S_{\infty}}_{(\beta,n)}\left( \prod_{i} \tau_{k_i}(\omega \alpha_i) \middle| \, \lambda \,  \right)
			=
			\sum_{m} (-p)^m
			\int_{[  
				\Hilb_{n,\beta,m}(S \times \p^1/S_{\infty})
				]^{\text{vir}} }
			\prod_{i} \tau_{k_i}(\omega \alpha_i) \ev_{\infty}^{\ast}(\lambda) \\
			=
			\sum_{m} (-p)^m
			\sum_{r|m}
			(-1)^{(r-1) \frac{m}{r}}
			\int_{ [ S_{\beta/r}^{[n_1, n]} ]^{\vir} }
			e_{\BC^{\ast}}\left( R \Hom_S(\CK, \CF-\CK)^{\vee} \otimes \Ft^{-r} \right) 
			\prod_i
			\tau_{k_i}( [ \mathbf{0}] \alpha_i) \cdot \pi_2^{\ast}(\lambda)
		\end{gather*}
		where $n_1 = n + \frac{m}{r} + \frac{1}{2} \beta^2/r^2$.
		We analyze the descendent insertion in the next lemma.
		\begin{lemma} \label{lemma:descedents}
			Under the identification $M_r^{1\text{-step}} \cong S_{\beta/r}^{[n_1, n]}$ we have that
			\[ \tau_{k}( [ \mathbf{0}] \alpha)|_{M_r^{1\text{-step}}} = \sum_{d \geq 0} \sigma_{d}(\alpha,\beta/r) ( r t )^{d} \]
			where $\sigma_{d}(\alpha,\beta')$ is a universal (i.e. independent of $\alpha,\beta'$) polynomial of complex cohomological degree $\deg \sigma_{d}(\alpha,\beta') = \deg \tau_{k}(\omega \alpha) - d$
			in the following variables:
			% $\tau_{j_1}^{S^{[n_1]}}(\alpha \beta^{\prime s})$,
			%$\tau_{j_2}^{S^{[n]}}(\alpha)$ and $z = c_1(\CO_{\p}(1))$.
			\[
			\tau_{j_1}^{S^{[n_1]}}(\alpha \beta^{\prime s}), \quad 
			\tau_{j_2}^{S^{[n]}}(\beta'), \quad z = c_1(\CO_{\p}(1)),
			\quad \int_S \alpha \beta^{\prime s}, \quad s \in \{ 0,1,2 \}, \quad j_1, j_2 \geq 0.
			\]
		\end{lemma}
		\begin{proof}[Proof of Lemma~\ref{lemma:descedents}]
			Let $\CJ$ denote the universal ideal sheaf over $M_r^{1\text{-step}}$. It sits in an exact sequence
			\[ 0 \to \pi_S^{\ast}(\CI_2) \otimes \CO(-r) \to \CJ \to \iota_{r \ast} \pi_S^{\ast}( \CI_1(-\beta/r) \otimes \CO_{\p}(-1) ) \to 0, \]
			where $\iota_r : S \times \Spec(k[x]/x^r) \to S \times \p^1$ is the inclusion.
			Since we have the exact sequence 
			\[ 0 \to \CO_{\p^1}(-r) \to \CO_{S \times \p^1} \to \iota_{r \ast} \CO \to 0 \]
			we obtain $\ch( \iota_{r \ast}(1) )|_{S_{0}} = 1 - e^{- r t}$ and therefore
			\[ \ch(\CJ) \cup [ \mathbf{0}] = \Big( \ch(\CI_2) \otimes e^{-r t} + (1-e^{-rt}) \ch( \CI_1 ) e^{-\beta/r} e^{-z} \Big) \cup [ \mathbf{0}]. \]
			Cupping with $\pi_S^{\ast}(\alpha)$ and pushing forward to $M_r^{1\text{-step}}$ we find
			\begin{align*}
				\tau_{k}( [ \mathbf{0}] \alpha)|_{M_r^{1\text{-step}}}
				& = -\pi_{\ast}( \ch_{2+k}(\CJ) \cup [ \mathbf{0}] \pi_S^{\ast}(\alpha)) \\
				& = 
				- \left[ \pi^{(2)}_{\ast}( \ch(\CI_2) \pi_S^{\ast}(\alpha)) e^{-r t} + (1-e^{-rt}) \pi^{(1)}_{\ast}\left( \ch( \CI_1 ) \pi_S^{\ast}( e^{-\beta/r} \alpha ) \right) e^{-z} \right]_{\deg(\tau_k(\alpha \omega))} \\
				& = 
				\Bigg[ \left( -({\scriptstyle \int_{S}} \alpha) 1 + \sum_{j_2 \geq 0} \tau_{j_2}^{S^{[n]}}(\alpha) \right) e^{-r t}  \\
				%\pi^{(2)}_{\ast}( \ch(\CI_2) \pi_S^{\ast}(\alpha)) e^{-r t} + 
				& \quad + (1-e^{-rt}) 
				\left( -({\scriptstyle \int_{S}} e^{-\beta/r} \alpha) 1 +
				\sum_{j_1 \geq 0} \tau_{j_1}^{S^{[n_1]}}(\alpha e^{-\beta/r}) \right) e^{-z} \Bigg]_{\deg(\tau_k(\alpha \omega))}
			\end{align*}
			where $\pi^{(i)} : S^{[n_i]} \times S \to S^{[n_i]}$ are the projections.
			This concludes the claim.
			%The claim now follows by taking Chern characters and pushing forward.
			%(Note that $\ch( \iota_{r \ast}(1) ) = 1 - e^{- r t}$ when restricted to $S \times \BA^1$.)
		\end{proof}
		
		We continue the proof of Theorem~\ref{thm:mc for real}.
		We have 
		$\rk R \Hom_S(\CK, \CF-\CK)^{\vee} = (\beta/r)^2 + m/r$, and hence can write
		\begin{equation} \label{e expnasion}
			e_{\BC^{\ast}}( R \Hom_S(\CK, \CF-\CK)^{\vee} \otimes \Ft^{-r})
			=
			\sum_{j} c_{(\beta/r)^2 + m/r+j}( R \Hom_S(\CK, \CF-\CK) ) (-rt)^{-j}.
		\end{equation}
		Moreover, $[ S_{\beta/r}^{[n_1, n]} ]^{\vir}$
		is of dimension $2n+1+m/r + \beta^2/r^2$.
		By Lemma~\ref{lemma:descedents} and since by the degree condition \eqref{deg condition} 
		the term $(r t)^{-j}$ coming from the expansion \eqref{e expnasion}
		cancels with the term $(r t)^{d_1 + \ldots + d_r}$ coming from the expansion of the $\tau_{k_i}([ \mathbf{0} ] \alpha_i)$,
		we conclude the following equality of non-equivariant integrals:
		\begin{gather*}
			Z^{S \times \p^1/S_{\infty}}_{(\beta,n)}\left( \prod_{i} \tau_{k_i}(\omega \alpha_i) \middle| \, \lambda \,  \right)
			=
			\sum_{m} (-p)^m
			\sum_{r|m}
			(-1)^{(r-1) \frac{m}{r}} \\
			\sum_j \sum_{d_1 + \ldots + d_{\ell} = j} (-1)^j
			\int_{ [ S_{\beta/r}^{[n_1, n]} ]^{\vir} }
			c_{(\beta/r)^2 + m/r+j}( R \Hom_S(\CK, \CF-\CK)^{\vee} )
			\prod_i \sigma_{d_i}(\alpha_i, \beta/r) \cdot  \pi_2^{\ast}(\lambda)
		\end{gather*}
		
		By Theorem~\ref{thm:universality} this integral depends upon $S$, $\beta/r$, $\alpha_i$ and $\lambda = \prod_{i} \Fq_{\lambda_i}(\delta_i)$
		only through the intersection pairings
		of the classes $\beta/r, \alpha_i, \delta_i,1, \pt$.
		Hence we may replace them by the isometric data given by
		\[ \varphi_r( \beta/r ), \varphi_r(\alpha_i), \varphi_r(\delta_i), 1, \pt. \]
		Inserting, and applying the above arguments backwards the above becomes
		\begin{gather*}
			%= \sum_{m} (-p)^m \sum_{r|m} (-1)^{(r-1) \frac{m}{r}} \int_{ [ S_{\varphi_r(\beta/r)}^{[n_1, n]} ]^{\vir} } e\left( R \Hom_S(K, F-K) \right) rod_i \tau_{k_i}(\varphi_r\alpha_i) \cdot \pi_2^{\ast}( \varphi_r(\lambda)) \\
			= \sum_{m} (-p)^m \sum_{r|m} (-1)^{(r-1) \frac{m}{r}} 
			\sum_j \sum_{d_1 + \ldots + d_{\ell} = j} \\
			\int_{ [ S_{\varphi_r(\beta/r)}^{[n_1, n]} ]^{\vir} }
			c_{\varphi_r(\beta/r)^2 + m/r+j}( R \Hom_{S_r}(\CK, \CF-\CK)^{\vee} )
			\prod_i \sigma_{d_i}(\varphi_r(\alpha_i), \varphi_r(\beta/r)) \cdot  \pi_2^{\ast}(\varphi_r(\lambda)) \\
			= 
			\sum_{m} (-p)^m
			\sum_{r|m}
			(-1)^{(r-1) \frac{m}{r}}
			\int_{[  
				\Hilb_{n,\varphi_r(\beta/r),m/r}(S \times \p^1/S_{\infty})
				]^{\text{vir}} }
			\prod_{i} \tau_{k_i}(\omega \varphi_r(\alpha_i)) \ev_{\infty}^{\ast}(\varphi_r(\lambda)) \\
			= 
			\sum_{r|m} 
			Z^{S \times \p^1/S_{\infty}}_{(\varphi_r(\beta/r),n)}\left( \prod_{i} \tau_{k_i}(\omega \varphi_r(\alpha_i)) \middle| \, \varphi_r(\lambda) \,  \right)(p^{r})
		\end{gather*}
		which was what we wanted to prove.
	\end{proof}

	\section{Proofs of the remaining main results}
	\label{sec:Proofs}
%	\subsection{Proof of Theorems~\ref{thm:Wall  cross rank 1} and \ref{thm:wallcrossing higher rank}}
%	This follows directly from Theorem~\ref{thm:wallcrossing term for real} and the main results of \cite{N1,N2}. \qed
	
	\subsection{Proof of Theorem~\ref{thm:Correction term}}
The first part (the independence from the divisibility) follows by Proposition~\ref{prop:div independence for Q}.
We hence need to evaluate $Q_{n,h,m}$.	
	Recall the generating series $\DT_n(p,q)$ and $\HH_n(p,q)$ from \eqref{DTnHHn},
	and the series 
	\[ \mathsf{Q}_n(p,q) = \sum_{h \geq 0} \sum_{m \in \BZ} \mathsf{Q}_{n,h,m} q^{h-1} p^m. \]
	By Theorem~\ref{thm:Correction term} we have for all $n$ the equality
	\begin{equation} \DT_n(p,q) = \HH_n(p,q) - \chi(S^{[n]}) \mathsf{Q}_n(p,q). \label{wallcrossing} \end{equation}
	
	Since $S^{[0]} = pt$ and $S^{[1]} = S$, by the Yau-Zaslow formula we have
	\[ \HH_0 = 0, \quad \HH_1 = -2 \frac{E_2(q)}{\Delta(q)}. \]
	On the other hand, by the Katz-Klemm-Vafa formula \cite{MPT} for $n=0$,
	and by \cite{Br} and \cite{K3xP1} (see also \cite{HAE}) for $n=1$ we have that
	\[
	\DT_0 = - \frac{1}{\Theta^2 \Delta}, \quad \DT_1 = -24 \frac{\wp(p,q)}{\Delta(q)}.
	\]
	where $\wp(p,q)$ is the Weierstra{\ss} elliptic function (see \cite[Sec.2]{K3xE}).
	
	By Proposition~\ref{prop:Qmultiplicativity}
	we have $\mathsf{Q}_n(p,q) = F_1^n F_2$ for some $F_1, F_2$. So from case $n=0$ we conclude that:
	\[ F_2 = \frac{1}{\Theta^2 \Delta}. \]
	For the $n=1$ term we conclude
	\[ F_1 = \frac{1}{24 F_2}( \HH_1 - \DT_1 )
	= \Theta^2 \cdot ( -\frac{1}{12} E_2 + \wp ) = \mathbf{G}(p,q)
	\]
	where we used that $\left( \frac{p}{dp} \right)^2 \log(\Theta(p,q)) = -\wp(p,q) + \frac{1}{12} E_2(q)$,
	see \cite[Equation (11)]{K3xE}.\footnote{Note that $F(z,\tau)$ in \cite{K3xE} corresponds to $-i \Theta(p,q)$. The variable convention is the same.} \qed 
	% the last equality is \cite[Equation (11)]{K3xE}. \qed
	
	\begin{rmk}
If one had the GW/DT correspondence for the cap geometry $(S \times \p^1)/S_{\infty}$ the above computation of $F_1$ would also follow from the results of \cite{K3xP1}.\footnote{The GW/DT correspondence for $(S \times \p^1)/S_{\infty}$ was recently proven in \cite{Marked}.}
%The correspondence will be taken up in follow-up work. \qed
	\end{rmk}

	\subsection{Proof of Theorem~\ref{thm: mc K3xE}}
	Define the series
	\[ Z^{S \times E/E}_{(\beta,n)}
	=
	\sum_{m \in \BZ} \DT^{S \times E}_{m, (\beta,n)} (-p)^m.
	\]
	Choose a class $D \in H^2(S,\BZ)$ such that $D \cdot \beta \neq 0$.
	By \cite{ObReduced} we have that
	\[
	Z^{S \times E/E}_{(\beta,n)} = \frac{1}{\beta \cdot D}
	Z^{S \times E}_{(\beta,n)}( \tau_0(\omega D)).
	\]
	By Theorem~\ref{thm:mc for real} we hence have that:
	\begin{align*}
		Z^{S \times E/E}_{(\beta,n)}
		& = 
		\frac{1}{\beta \cdot D}
		Z^{S \times E}_{(\beta,n)}( \tau_0(\omega D)) \\
		& = \frac{1}{D \cdot \beta} \sum_{r | \beta}
		Z^{S \times E}_{(\varphi_r(\beta/r), n)}( \tau_0( \omega \varphi_r(D) ) )(p^r) \\
		& = \sum_{r | \beta}
		\frac{ \varphi_r(D) \cdot \varphi_r(\beta/r)}{D \cdot \beta}
		Z^{S \times E/E}_{(\varphi_r(\beta/r), n)}(p^r) \\
		& = \sum_{r | \beta}  \frac{1}{r} Z^{S \times E/E}_{(\varphi_r(\beta/r), n)}(p^r).
	\end{align*}
	This implies the claim by taking coefficients. \qed

	\subsection{Proof of Theorem~\ref{thm: mc Hilb}}
	Since the multiple cover formula is compatible with the divisor equation and restriction of Gromov-Witten classes to boundary components,
	it is enough to consider the case $N=3$ and $\alpha = 1$.
	By \cite[Cor. 4.2]{N2} these primary invariants are identical to the
	DT invariants of the relative geometry $S \times \p^1 / S_{0,1,\infty}$.
	A small calculation shows that the multiple cover formula given in Theorem~\ref{thm:mc for real} 
	implies the form of the multiple cover formula given in Theorem~\ref{thm: mc Hilb}. \qed

	Mathemathisches Institut, Universit\"at Bonn
	
	georgo@math.uni-bonn.de \\
	
\end{document}